\journal{Advances in Mathematics}
\newtheorem{thm}{Theorem}
\newtheorem{lema}{Lemma}
\newtheorem{Cor}[lema]{Corollary}
\newtheorem{definition}[lema]{Definition}
\newtheorem{Rem}[lema]{Remark}
\newtheorem{Rems}[lema]{Remarks}
\newtheorem{convention}[lema]{Convention}
\newcommand{\GL}{\operatorname{GL}}
\newcommand{\ydstresd}{{}^{\ku^{\Sn_3}}_{\ku^{\Sn_3}}\mathcal{YD}}
\newcommand{\bB}{\mathbf{B}}
\newcommand{\cJ}{\mathcal{J}}
\newcommand{\cO}{\mathcal{O}}
\newcommand{\cS}{\mathcal{S}}
\newcommand{\cM}{\mathcal{M}}
\newcommand{\cC}{\mathcal{C}}
\newcommand{\Sn}{{\mathbb S}}
\newcommand{\B}{{\mathbb B}}
\newcommand{\xtop}{x_{top}}
\newcommand{\ytop}{y_{top}}
\newcommand{\xij}[1]{x_{(#1)}}
\newcommand{\yij}[1]{y_{(#1)}}
\newcommand{\DSn}{\D(\Sn_3)}
\newcommand{\oV}{\overline{V}}
\renewcommand{\_}[1]{_{\left( #1 \right)}}
\newcommand{\ot}{{\otimes}}
\newcommand{\ku}{\Bbbk}
\newcommand\fC{\mathsf{C}}
\newcommand\fE{\mathsf{E}}
\newcommand\fG{\mathsf{G}}
\newcommand\fJ{\mathsf{J}}
\newcommand\fL{\mathsf{L}}
\newcommand\fM{\mathsf{M}}
\newcommand\fN{\mathsf{N}}
\newcommand\fMsoc{\mathsf{M}_{\mathsf{soc}}}
\newcommand\fO{\mathsf{O}}
\newcommand\fP{\mathsf{P}}
\newcommand\fQ{\mathsf{Q}}
\newcommand\fR{\mathsf{R}}
\newcommand\fS{\mathsf{S}}
\newcommand\fT{\mathsf{T}}
\newcommand\fU{\mathsf{U}}
\newcommand\fV{\mathsf{V}}
\newcommand\fX{\mathsf{X}}
\newcommand\fa{\mathsf{a}}
\newcommand\fb{\mathsf{b}}
\newcommand\fc{\mathsf{c}}
\newcommand\fd{\mathsf{d}}
\newcommand\fe{\mathsf{e}}
\newcommand\fg{\mathsf{g}}
\newcommand\fj{\mathsf{j}}
\newcommand\fm{\mathsf{m}}
\newcommand\fn{\mathsf{n}}
\newcommand\fo{\mathsf{o}}
\newcommand\fp{\mathsf{p}}
\newcommand\fq{\mathsf{q}}
\newcommand\fr{\mathsf{r}}
\newcommand\ft{\mathsf{t}}
\newcommand\fu{\mathsf{u}}
\newcommand\fv{\mathsf{v}}
\newcommand\fy{\mathsf{y}}
\newcommand{\Z}{{\mathbb Z}}
\newcommand{\N}{{\mathbb N}}
\newcommand{\D}{{\mathfrak D}}
\newcommand{\BV}{{\mathfrak B}}
\newcommand{\ydh}{{}^H_H\mathcal{YD}}
\newcommand{\ydg}{{}^{\ku G}_{\ku G}\mathcal{YD}}
\newcommand{\Ind}{\operatorname{Ind}}
\newcommand\sgn{\operatorname{sgn}}
\newcommand\ad{\operatorname{ad}}
\newcommand\supp{\operatorname{Supp}}
\newcommand\mm[1]{\boldsymbol{|#1\rangle}}
\newcommand\e{\varepsilon}
\begin{document}

\begin{frontmatter}

\title{Verma and simple modules for quantum groups at non-abelian groups}

\author[rvt]{Barbara Pogorelsky}
\ead{barbara.pogorelsky@ufrgs.br}

\author[focal]{Cristian Vay\corref{cor1}}
\ead{vay@famaf.unc.edu.ar}

\cortext[cor1]{Corresponding author}

\address[rvt]{Instituto de Matem\'atica, Universidade Federal do Rio Grande do Sul, Av. Bento Goncalves 9500, Porto Alegre, RS, 91509-900, Brazil}

\address[focal]{Facultad de Matem\'atica, Astronom\'ia y F\'isica, Universidad Nacional de C\'ordoba, CIEM--CONICET,  Medina A\-llen\-de s/n,
Ciudad Universitaria, 5000 C\' ordoba, Rep\'ublica Argentina.}

\begin{abstract} The Drinfeld double $\D$ of the bosonization of a finite-dimensional Nichols algebra $\BV(V)$ over a finite non-abelian group $G$ is called a {\it quantum group at a non-abelian group}. We introduce Verma modules over such a quantum group  $\D$ and prove that a Verma module has simple head and simple socle. This provides two bijective correspondences between the set of simple modules over $\D$ and the set of simple modules over the Drinfeld double $\D(G)$. As an example, we describe the lattice of submodules of the Verma modules over the quantum group at the symmetric group $\Sn_3$ attached to the 12-dimensional Fomin-Kirillov algebra, computing all the simple modules and calculating their dimensions.
\end{abstract}

\begin{keyword}
Hopf algebras\sep Nichols algebras\sep Fomin-Kirillov algebras\sep Quantum groups\sep Verma modules\sep Representation Theory.
\MSC 16W30
\end{keyword}

\end{frontmatter}

\section{Introduction}

The Drinfeld doubles of bosonizations of braided Hopf algebras over abelian groups, and their quotients by central group-likes, are known in the folklore as {\it quantum groups}. Such is the case of the {\it quantum enveloping algebra $U_{q}(\mathfrak{g})$} or the {\it small quantum group $u_{q}(\mathfrak{g})$}  \cite{drinfeld, lusztig}. These quantum groups have been intensely studied, both their intrinsic structures and their representation theories. However, to the best of our  knowledge, there is no research which contemplates non-abelian groups. The purpose of our work is to give a first step in this direction.

More precisely, let $G$ be a finite non-abelian group and $V$ a Yetter-Drinfeld module over $\ku G$ with finite-dimensional Nichols algebra $\BV(V)$ (we will work over an algebraically closed field $\ku$ of characteristic zero). We denote by $\D$ the Drinfeld double of the bosonization $\BV(V)\#\ku G$ and call it {\it ``quantum group at a non-abelian group''} -- recall that $U_{q}(\mathfrak{g})$ is called {\it quantum group at a root of $1$} if the indeterminate $q$ is specialized to a root of $1$ \cite{lusztig1}. In this work, we deal with the category of representations of $\D$. We use the methods coming from the theory of Lie algebras which  were also used in the study of quantum groups over abelian groups. We find similarities as well as differences between our results and their analogues in the context of Lie algebras or the mentioned quantum groups. To explain these similarities and differences, we first recall briefly the situation in those frameworks.

Assume that $U$ is either an enveloping algebra of a Lie algebra as in \cite[Chapter 7]{dixmier} or a quantum group as in \cite[Chapter 5]{Jantzen}, \cite[Chapter 3]{lusztig}; the reader can find all the details of the following exposition in these chapters. Roughly speaking, $U$ has a distinguished commutative and cocommutative Hopf subalgebra $U^0$. Hence
\begin{enumerate}\renewcommand{\theenumi}{\alph{enumi}}\renewcommand{\labelenumi}{(\theenumi)}
\item\label{intro:weights} the maximal spectrum of $U^0$ is an abelian group $T$. The elements of $T$, the algebra maps $U^0 \to \ku$, are called {\it weights}. The module corresponding to the weight $\alpha$ is denoted $\ku_{\alpha}$.
\item\label{intro:prod of weights} The  product of $T$ is implemented by tensoring.
That is, the tensor product of  $\ku_{\alpha}$ and $\ku_{\lambda}$ is  the module $\ku_{\alpha+\lambda}$.
\end{enumerate}

Also, $U$ admits a {\it triangular decomposition}, that means that there are subalgebras $U^{-}$ and $U^{+}$ such that the multiplication $U^{-}\,\ot U^0\,\ot U^{+}\rightarrow U$ gives a linear isomorphism. Indeed, $U$ is a $\Z$-graded algebra such that the degrees of $U^0$, $U^{-}$ and $U^{+}$ are zero, negative and positive, respectively. Let $U^{\geq0}$ be the subalgebra generated by $U^0$ and $U^{+}$. Given a weight $\lambda$, this can be seen as an $U^{\geq0}$-module by letting $U^+$ act trivially on it. We denote it again by $\ku_\lambda$.

Let $M$ be an $U$-module and $M_\lambda=\left\{m\in M\mid h\cdot m=\lambda(h)m\,\mbox{ for all }\,h\in U^0\right\}$ its {\it weight space} of weight $\lambda$. We restrict our attention to the $U$-modules which decompose as the direct sum of their weight spaces. For instance, $U$ regarded as a module with respect to the {\it adjoint action}. Then

\begin{enumerate}\renewcommand{\theenumi}{\alph{enumi}}\renewcommand{\labelenumi}{(\theenumi)}\addtocounter{enumi}{2}
\item\label{intro:decomp weights} $U_\alpha\cdot M_\lambda\subseteq M_{\alpha+\lambda}$ for all weights $\alpha$ and $\lambda$.
\end{enumerate}

An $U$-module $M$ is called a {\it highest-weight module} (of weight $\lambda$) if it is generated by an element $v\in M_\lambda$ such that $U^+v=0$. Notice that $M=Uv=U^-v$ by the triangular decomposition of $U$. The basic examples of highest-weight modules are the {\it Verma modules} $M(\lambda)=U\ot_{U^{\geq0}}\ku_\lambda$. These are essential in the study of the representation theory of $U$ because
\begin{enumerate}\renewcommand{\theenumi}{\alph{enumi}}\renewcommand{\labelenumi}{(\theenumi)}\addtocounter{enumi}{3}
\item\label{intro:verma} Every Verma module has a unique simple quotient and every simple $U$-module is a quotient of a unique Verma module.
\item\label{intro:verma free} Every Verma module is a free $U^-$-module of rank $1$.
\end{enumerate}
Other important features of the Verma modules can be found in the  references above.

Let us consider now a quantum group $\D$ at a finite non-abelian group $G$. The  role of $U^0$ shall be played by the Drinfeld double $\D(G)$ of $\ku G$. This is a semisimple but not commutative Hopf subalgebra of $\D$. We will see that $\D$ admits a triangular decomposition
$$
\D=\BV(V)\ot\D(G)\ot\BV(\oV)
$$
where $\oV$ denotes the dual object of $V$ in the category of $\D(G)$-modules and $\BV(\oV)$ is its Nichols algebra. In this setting the bosonization $\D^{\geq0}=\BV(\oV)\#\D(G)$ shall play the role of $U^{\geq0}$. We will calculate the commutation rules between the generators of $\D(G)$, $V$ and $\oV$, and deduce that $\D$ is a $\Z$-graded algebra with homogeneous spaces
$$
\D^{n}=\bigoplus_{n=j-i}\BV^{i}(V)\ot\D(G)\ot\BV^{j}(\oV).
$$

The classification of the simple $\D(G)$-modules is well-known, see for instance \cite[Subsection 3.1]{AG1}; unlike \eqref{intro:weights}, there are simple modules of dimension greater than one. The simple $\D(G)$-modules are parametrized by pairs $(\cO, \varrho)$, where $\cO$ is a conjugacy class in $G$ and $\varrho$ is an irreducible representation of the centralizer of a fixed $g\in \cO$. If $M(g,\varrho)$ denotes the corresponding simple $\D(G)$-module, cf. \eqref{eq:Mgrho}, then it becomes a $\D^{\geq0}$-module by letting  $\BV(\oV)$ act trivially on it. Therefore we can define the Verma modules for a quantum group at a non-abelian group as the induced modules
$$
\fM(g,\varrho)=\D\,\ot_{\D^{\geq0}}M(g,\varrho).
$$
Thus $\fM(g,\varrho)$ is a free $\BV(V)$-module of rank $\dim M(g,\varrho)=\#\cO_g\cdot\dim (U,\varrho)$, compare with \eqref{intro:verma free}.

Our main result asserts that \eqref{intro:verma} holds true in our context, {\it i.~e.} every Verma module $\fM(g,\varrho)$ has a unique simple quotient and every simple $\D$-module is a quotient of a unique Verma module, Theorem \ref{teo:bi con L}. Therefore we obtain a bijective correspondence
\begin{align*}
\biggl\{\mbox{Simple $\D(G)$-modules}\biggr\}&\leftrightsquigarrow\biggl\{\mbox{Simple $\D$-modules}\biggr\}\\
M(g,\varrho)\quad\quad\quad\quad&\leftrightsquigarrow\quad\quad\quad\quad\fL(g,\varrho)
\end{align*}
where $\fL(g,\varrho)$ denotes the head of $\fM(g,\varrho)$. Moreover, we prove that the socle $\fS(g,\varrho)$ of $\fM(g,\varrho)$ is simple what provides another bijective correspondence between the set of simple $\D(G)$-modules and the set of simple $\D$-modules, Theorem \ref{teo:bi con socle}. We also give a criterion to decide whether or not a Verma module is simple, Corollary \ref{cor:verma simple xtop}, and show that the socle and the head are related by
$$
\bigl(\fS(g,\varrho)\bigr)^*\simeq\fL(\hat{g}^*,\hat{\varrho}^*)
$$
where $M(\hat{g}^*,\hat{\varrho}^*)=\bigl(\BV^{top}(V)\ot M(g,\varrho)\bigr)^*$ and $\BV^{top}(V)$ is the homogeneous component of maximum degree of $\BV(V)$, Theorem \ref{teo:L dual S}; recall that $\BV^{top}(V)$ is one-dimensional.

In order to compute explicitly the simple $\D$-modules, we have to study the submodules of the Verma modules. This is done in the abelian case using the properties \eqref{intro:prod of weights} and \eqref{intro:decomp weights} among others, which allow to obtain remarkable results under certain general assumptions. Although the $\D$-modules decompose as the direct sum of simple $\D(G)$-modules, our situation is more complex because \eqref{intro:prod of weights} and \eqref{intro:decomp weights} do not hold true. Here the tensor product between simple $\D(G)$-modules is not necessarily simple and hence we have to know their fusion rules.

We give a general strategy to compute the highest-weight submodules of any $\D$-module $M$. We use that $\D$ is a $\Z$-graded $\D(G)$-module with respect to the adjoint action, which respects the triangular decomposition, and the fact that the action $\D\ot M\rightarrow M$ is a morphism of $\D(G)$-modules, \S\ref{subsec:algorithm}. We carry out this strategy to compute the simple modules in a concrete example in Section \ref{sec:S3} as we summarize below.

\subsection{A quantum group at the symmetric group \texorpdfstring{$\Sn_3$}{S3}}

The first genuine example of a finite-dimensional Nichols algebra over a non-abelian group is the Fomin-Kirillov algebra $\mathcal{FK}_3$ \cite{FK}. It is isomorphic to the Nichols algebra $\BV(V)$ of the Yetter-Drinfeld module $V=\ku\{\xij{12},\xij{23},\xij{13}\}$ over $\ku\Sn_3$. The action and coaction on $V$ are
$$
g\cdot\xij{ij}=\sgn(g)\,x_{g(ij)g^{-1}}\quad\mbox{and}\quad(\xij{ij})\_{-1}\ot(\xij{ij})\_{0}=(ij)\ot\xij{ij}
$$
for any transposition $(ij)$ and $g\in\Sn_3$ \cite{MS} where $\sgn:\Sn_3\rightarrow\{\pm1\}$ denotes the sign map.

Let now $\D$ be the Drinfeld double of $\BV(V)\#\ku\Sn_3$. As an algebra, $\D$ is generated by
\begin{align*}
-&\mbox{the generators of $\BV(V)$: }\quad\xij{12},\,\xij{23},\,\xij{13};\\
-&\mbox{the generators of $\DSn$: }\quad g,\,\delta_g\,\mbox{ for all } g\in\Sn_3;\\
-&\mbox{the generators of $\BV(\oV)$: }\quad\yij{12},\,\yij{23},\,\yij{13};
\end{align*}
we shall see that $V\simeq\oV$ as $\DSn$-modules. These elements are subjected to the next relations:
\begin{align*}
\tag{given by $\BV(V)$\,}\xij{ij}^2&=\xij{ij}\xij{ik}+\xij{jk}\xij{ij}+\xij{ik}\xij{jk}=\xij{ik}\xij{ij}+\xij{ij}\xij{jk}+\xij{jk}\xij{ik}=0,\\
\tag{given by $\BV(\oV)$\,}\yij{ij}^2&=\yij{ij}\yij{ik}+\yij{jk}\yij{ij}+\yij{ik}\yij{jk}=\yij{ik}\yij{ij}+\yij{ij}\yij{jk}+\yij{jk}\yij{ik}=0,
\end{align*}
\begin{align}
\tag{given by $\DSn$\,}\delta_h\,g=&g\,\delta_{g^{-1}hg},\\
\noalign{\smallskip}
\tag{given by the bosonizations}g\xij{ij}=\sgn(g)\,x_{g(ij)g^{-1}}g,\quad\quad&\delta_g\yij{ij}=\yij{ij}\delta_{(ij)g},
\end{align}
\begin{align}
\notag\delta_h\xij{ij}=\xij{ij}\delta_{(ij)h},&\quad\yij{ij}\,g=\sgn(g)\,g\,y_{g^{-1}(ij)g},\\
\noalign{\smallskip}
\tag{given by the definition of $\D$}\yij{ij}\xij{ij}+\xij{ij}\yij{ij}&=1+(ij)(\delta_{(ij)}-\delta_e),\\
\notag\yij{ik}\xij{ij}+\xij{ij}\yij{jk}&=(ij)(\delta_{(ik)}-\delta_{(ik)(ij)}),
\end{align}
for all transpositions $(ij)$, $(ik)$ and $g,h\in\Sn_3$.

On the other hand, the simple $\D$-modules are parametrized by the simple $\DSn$-modules according to our main result. Let $\sigma=(12)$ and $\tau=(123)$ be permutations in $\Sn_3$. Then $\cO_e$, $\cO_\sigma$ and $\cO_\tau$ are the conjugacy classes of $\Sn_3$ and $\DSn$ has eight non-isomorphic simple modules. Namely,
$$
M(e,+),\quad M(e,-),\quad M(e,\rho),\quad M(\sigma,+),\quad M(\sigma,-),\quad M(\tau,0),\quad M(\tau,1)\quad\mbox{and}\quad M(\tau,2).
$$
We recall the structures of them and their fusion rules in \S\ref{subsub:s3 sigma simples}--\S\ref{subsub:fusion rules}.

We compute the lattice of submodules of the corresponding Verma modules $\fM(g,\varrho)$ and classify the simple $\D$-modules. In particular, we prove that
\begin{itemize}
\item $\fL(e,+)\simeq M(e,+)$ as $\DSn$-modules and $\dim\fL(e,+)=1$, Corollary \ref{cor: l e mas cociente de T}.
\item $\fL(e,\rho)\simeq M(e,\rho)\oplus M(\sigma,+)\oplus M(\tau,0)$ as $\DSn$-modules and $\dim\fL(e,\rho)=7$, Corollary \ref{cor: l e rho cociente de VQN}.
\item $\fL(\tau,0)\simeq M(\tau,0)\oplus M(\sigma,+)\oplus M(e,\rho)$ as $\DSn$-modules and $\dim\fL(\tau,0)=7$, Corollary \ref{cor: l tau cero cociente de UPO}.
\item $\fL(\sigma,-)\simeq M(\sigma,-)\oplus M(\tau,1)\oplus M(\tau,2)\oplus M(\sigma,-)$ as $\DSn$-modules and $\dim\fL(\tau,0)=10$, Theorem \ref{teo:el latice de sigma menos}.
\item The Verma modules $\fM(e,-)$, $\fM(\tau,1)$, $\fM(\tau,2)$ and $\fM(\sigma,+)$ are simple, Theorem \ref{teo:los simples}. Their dimensions are $12$, $24$, $24$ and $36$, respectively. As $\DSn$-modules they are the tensor product of $\BV(V)$ with the associated simple $\DSn$-module.
\end{itemize}

We finish by pointing out other facts about these modules.
\begin{itemize}
\item[$\circ$] The head $\fL(g,\varrho)$ and the socle $\fS(g,\varrho)$ are isomorphic, Theorem \ref{teo:los simples}, Lemma \ref{le:socle sigma menos} and Corollaries \ref{cor: l e mas cociente de T}, except to
$$
\fL(\tau,0)\simeq\fS(e,\rho)\quad\mbox{and}\quad\fL(e,\rho)\simeq\fS(\tau,0),\quad\mbox{Corollary \ref{cor: l tau cero s e rho}}.
$$
\item[$\circ$] The simple $\D$-modules are self-duals except to
$$
\bigl(\fL(\tau,0)\bigr)^*\simeq\fL(e,\rho)\quad\mbox{and}\quad\bigl(\fL(e,\rho)\bigr)^*\simeq\fL(\tau,0)\quad\mbox{by Theorem \ref{teo:L dual S}}.
$$
\item[$\circ$] $\fM(\sigma,-)$ has submodules which are not homogeneous, Lemma \ref{le:T}, and its maximal submodule is not generated by highest-weight submodules, Theorem \ref{teo:el latice de sigma menos}.
\end{itemize}

\section*{Acknowledgments}
The first author was financially supported by Capes - Brazil. The second author was partially supported by ANPCyT-Foncyt, CONICET and Secyt (UNC).

This work was carried out in part during the visit of the first author to the University of C\'ordoba (Argentina). She would like to thank the Faculty of Mathematics, Astronomy and Physics for its warm hospitality and support. Both authors are grateful to Nicol\'as Andruskiewitsch for drawing their attention to this problem and also for so many suggestions. The second author thanks Vyacheslav Futorny for very useful conversations during his visit to the University of S\~ao Paulo under the framework of the MATH--AmSud program. We also thank the referee for the careful reading of our article and for providing constructive comments to improve the exposition of this paper.

\section{Preliminaries}
Through this work $\ku$ denotes an algebraically closed field of characteristic zero. The dual of a vector space $V$ will be denoted by $V^*$. If $v\in V$ and $f\in V^*$, then $\langle f,v\rangle$ denotes the evaluation of $f$ in $v$. Let $S$ be a set. We write $\ku S$ for the free vector space on $S$. Let $A$ be an algebra. By an $A$-module, we mean a left $A$-module. If $S$ is a subset of an $A$-module $M$ and $B\subseteq A$, then $BS$ denotes the set of all $bs$ with $b\in B$ and $s\in S$.

Let $H$ be a finite-dimensional Hopf algebra. We denote by $\Delta$, $\cS$ and $\varepsilon$ the comultiplication, the antipode and the counit of $H$. We will use the Sweedler notation $\Delta(h)=h\_{1}\ot h\_{2}$ for the comultiplication of any $h\in H$, and for the coaction $\delta(m)=m\_{-1} \ot m\_{0}$ of an element $m$ belonging to an $H$-comodule.

Recall that $\ydh$ denotes the category of Yetter-Drinfeld modules over $H$, whose objects are the $H$-modules and $H$-comodules $M$ such that for every $h\in H$ and $m\in M$ it holds that
$$
(hm)\_{-1} \ot(hm)\_{0}=h\_{1}m\_{-1}\cS(h\_{3})\ot h\_{2}m\_{0}.
$$

\subsection{}\label{subsub:DH} We consider the Drinfeld double $\D(H)$ of $H$ according to \cite[Theorem 7.1.1]{majid-q}. Namely, $\D(H)$ is $H\ot H^*$ as coalgebra. Meanwhile, the multiplication and the antipode are given by
\begin{align}\label{eq:DH}
\begin{split}
(h\ot f)(h'\ot f')=&\langle f\_{1}, h'\_{1}\rangle\langle f\_{3},\cS_H(h'\_{3})\rangle (hh'\_{2}\ot f'f\_{2}),\\
\noalign{\smallskip}
\cS(h\ot f)=&(1\ot\cS_{H^*}^{-1}(f))(\cS_H(h)\ot\e),\quad\mbox{for every $h,h'\in H$ and $f,f'\in H^*$}.
\end{split}
\end{align}
In consequence, we have that $H$ and $H^{*op}$ are Hopf subalgebras of $\D(H)$.

Recall that the category $\ydh$ is braided equivalent to the category ${}_{\D(H)}\cM$ of $\D(H)$-modules. Namely, if $M\in\ydh$, then $M$ is a $\D(H)$-module by setting
\begin{align}\label{eq:ydh a dh}
(hf)\cdot m=\langle f,m\_{-1}\rangle hm\_{0}
\end{align}
for every $h\in H$, $f\in H^*$ and $m\in M$.

\subsection{}\label{subsub:BV}
The Nichols algebra of $V\in\ydh$ is constructed as follows, see for instance \cite[$\S$2.1]{AS2}.  First, we consider the tensor algebra $T(V)$ as a graded braided Hopf algebra in $\ydh$ by defining
$$
\Delta(v)=v\ot1+1\ot v,\quad\cS(v)=-v\quad\mbox{and}\quad\varepsilon(v)=0
$$
for all $v\in V$. Let $\cJ(V)$ be the maximal ideal and coideal of $T(V)$ generated by homogeneous elements of degree $\geq2$. Then the Nichols algebra of $V$ is the quotient
$$\BV(V)=T(V)/\cJ(V)$$
which is a graded braided Hopf algebra in $\ydh$. Its homogeneous component of degree $n\in\N$ will be denoted $\BV^n(V)$. Note that $\BV^1(V)=V$ and $\BV^0(V)=\ku$. Moreover, if $\BV(V)$ is finite-dimensional, then its homogeneous component of maximum degree is one-dimensional and it is the space of left and right integrals, see for instance \cite[$\S$2.3 and $\S$3.2]{AG1}.

\subsection{}\label{subsub:boson} The bosonization $\BV(V)\#H$ \cite{radford,majid} is the Hopf algebra structure defined on $\BV(V)\ot H$ in such a way that $H$ is a Hopf subalgebra, $\BV(V)$ is a subalgebra,
\begin{align}\label{eq:boson}
hv=(h\_{1}\cdot v)\#h\_{2}\quad\mbox{and}\quad\Delta(v)=v\ot1+v\_{-1}\ot v\_{0}\quad\mbox{for all $h\in H$ and $v\in V$.}
\end{align}
It is a graded Hopf algebra where its homogeneous component of degree $n\in\N$ is $\BV^n(V)\#H$.

\subsection{}\label{subsub:G} Let $G$ be a finite group. The unity element of $G$ is denoted by $e$. We set $\ku^G=(\ku G)^*$, the dual Hopf algebra of the group algebra $\ku G$. Let $\{\delta_g\}_{g\in G}$ be the dual basis of the canonical basis $\{g\}_{g\in G}$ of $\ku G$. The comultiplication of an element $\delta_g$ is
\begin{align*}
\Delta(\delta_g)=\sum_{t\in G}\delta_t\ot\delta_{t^{-1}g}.
\end{align*}
Let $M$ be a $\ku^G$-module and $g\in G$. Then $M$ is $G$-graded with homogeneous component of degree $g$:
$$
M[g]=\delta_g M=\left\{m\in M\mid f\cdot m=f(g)m\quad\forall f\in\ku^G\right\}.
$$
If $S\subseteq M$, we set $S[g]=S\cap M[g]$. We denote by $\supp M$ the subset of $G$ formed by those elements whose homogeneous component in $M$ is non-zero. The one-dimensional $\ku^G$-module of degree $g$ will be denoted $\ku_g$. If $\ku^G$ is a subalgebra of $A$, then we will consider $A$ as a $\ku^G$-algebra with the adjoint action, that is $fa=\ad(f)a=f\_{1}a\cS(f\_{2})$ for any $a\in A$ and $f\in\ku^G$.

\subsection{The Drinfeld double of a group algebra}\label{subsec:DG}
We denote by $\D(G)$ the Drinfeld Double of $\ku G$. Since $\ku^G$ is a commutative algebra, $\ku^G$ and $\ku G$ are Hopf subalgebras of $\D(G)$. Then the algebra structure of $\D(G)$ is completely determined by
\begin{align*}
&&&&\delta_h\,g=g\,\delta_{g^{-1}hg}&&\quad\forall g,h\in G,\quad\mbox{cf. \eqref{eq:DH}.}
\end{align*}

We will define Verma modules in $\S$\ref{subsec:verma} by inducing from the simple $\D(G)$-modules. These are well-known because they are equivalent to the simple objects in $\ydg$ and a description of these last can be found for instance in \cite[Subsection 3.1]{AG1}. We recall this description but in the context of modules over $\D(G)$.

Let $\cO_g$ be the conjugacy class of $g\in G$, $\cC_g$ the centralizer of $g$ and $(U,\varrho)$ an irreducible representation of $\cC_g$. The $\ku G$-module induced by $(U,\varrho)$,
\begin{align}\label{eq:Mgrho}
M(g,\varrho)=\Ind_{\cC_g}^GU=\ku G\ot_{\ku\cC_g}U,
\end{align}
is also a $\ku^G$-module if we define the action by
$$f\cdot (x\,\ot_{\ku\cC_g} u)=\langle f,xgx^{-1}\rangle x\,\ot_{\ku\cC_g} u,\quad\mbox{for all $f\in\ku^G$, $x\in G$ and $u\in U$}.$$
Then $x\,\ot_{\ku\cC_g} u$ is of $G$-degree $xgx^{-1}$ and $\supp M(g,\varrho)=\cO_g$. Note that $\dim M(g,\varrho)=\#\cO_g\cdot\dim U$.

Therefore $M(g,\varrho)$ is a $\D(G)$-module. Moreover, $M(g,\varrho)$ is simple and every simple $\D(G)$-module  is of this form by \cite[Proposition 3.1.2]{AG1}.

\begin{definition}
A $\D(G)$-module is {\it of weight $(g,\varrho)$} if it is isomorphic to $M(g,\varrho)$.
\end{definition}

Let $\Sn_3$ be the group of bijections on $\{1,2,3\}$. We set $\sigma=(12)$ and $\tau=(123)$. These two cycles generate $\Sn_3$ and satisfy the relations $\sigma^2=e=\tau^3$ and $\sigma\tau\sigma=\tau^{-1}$. The conjugacy classes of $\Sn_3$ are
$$
\cO_e=\{e\},\,\cO_\sigma=\left\{(12),(13),(23)\right\}\,\mbox{ and }\,\cO_\tau=\left\{(123),(132)\right\}.
$$

Next, we describe the simple $\DSn$-modules which we will consider in $\S$\ref{sec:S3}.

\subsubsection{Simple modules attached to \texorpdfstring{$\sigma$}{sigma}}\label{subsub:s3 sigma simples}

The centralizer $\cC_\sigma$ is just the cyclic subgroup generated by $\sigma$. Then $\cC_\sigma$ has only two irreducible representations: the trivial one and the induced by the sign map $\sgn:\Sn_3\rightarrow\{\pm1\}$. Therefore the simple $\DSn$-modules attached to $\sigma$ are
$$
M(\sigma,+):=M(\sigma,\varepsilon)\quad\mbox{and}\quad M(\sigma,-):=M(\sigma,\sgn).
$$

Let us consider the set of symbols $\left\{\mm{12}_{\pm}, \mm{23}_{\pm}, \mm{13}_{\pm}\right\}$ as a basis of $M(\sigma,\pm)$. Sometimes we write $\mm{\sigma\tau^t}_{\pm}$ instead of $\mm{ij}_{\pm}$, if $\sigma\tau^t=(ij)$, and omit the subscript if there is no place for confusion. Hence the action of $\DSn$ on $M(\sigma,\pm)$ is defined in such a way that $\mm{\sigma\tau^t}_{\pm}$ has $\Sn_3$-degree $\sigma\tau^t$ and
$$
\sigma\cdot \mm{\sigma\tau^t}_{\pm}=\pm\, \mm{\sigma\tau^{-t}}_{\pm}\quad\mbox{and}\quad\tau\cdot \mm{\sigma\tau^t}_{\pm}=\mm{\sigma\tau^{t+1}}_{\pm}.
$$

\subsubsection{Simple modules attached to \texorpdfstring{$\tau$}{tau}}\label{subsub:s3 tau simples}

From now on, we fix a root of the unit $\zeta$ of order $3$. The centralizer $\cC_\tau$ is the cyclic subgroup generated by $\tau$. Then $\cC_\tau$ has (up to isomorphisms) three irreducible representations. These are given by the group maps $\rho_\ell:\cC_{\tau}=\langle\tau\rangle\longmapsto\ku^*$, $\tau\mapsto\zeta^\ell$ for $\ell=0,1,2$. Therefore the simple $\DSn$-modules attached to $\tau$ are
$$
M(\tau,\ell):=M(\tau,\rho_\ell)\quad\mbox{for $\ell=0,1,2$}.
$$

Let us consider the set of symbols $\{\mm{123}_{\ell}, \mm{132}_{\ell}\}$ as a basis of $M(\tau,\ell)$. Sometimes we write $\mm{\tau^{t}}_{\ell}$ instead of $\mm{ijk}_{\ell}$, if $\tau^{t}=(ijk)$, and omit the subscript if there is no place for confusion. Hence the action of $\DSn$ on $M(\tau,\ell)$ is defined in such a way that $\mm{\tau^{\pm1}}_\ell$ is of $\Sn_3$-degree $\tau^{\pm1}$ and
$$
\sigma\tau^{t}\cdot\mm{\tau^{\pm1}}_\ell=\zeta^{\pm t\ell}\mm{\tau^{\mp1}}_\ell\quad\mbox{and}\quad\tau^t\cdot\mm{\tau^{\pm1}}_\ell=\zeta^{\pm t\ell}\,\mm{\tau^{\pm1}}_\ell
$$
for $t=0,1,2$. It is not difficult to check that
\begin{align}\label{eq:iso tau objects as s3 mod}
M(\tau,1)&\longrightarrow M(\tau,2),\quad\mm{\tau^{\pm1}}_1\longmapsto\mm{\tau^{\mp1}}_2
\end{align}
is an isomorphism of $\ku\Sn_3$-modules.

\subsubsection{Simple modules attached to \texorpdfstring{$e$}{e}} \label{subsub:s3 e simples}
Let $\rho:\Sn_3\rightarrow\GL_2(\ku)$ be the map defining the two-dimensional Specht $\Sn_3$-module. Then $(\ku,\varepsilon)$, $(\ku,\sgn)$ and $(\ku^2,\rho)$ is a complete list of non-isomorphic irreducible $\Sn_3$-modules. Therefore the simple $\DSn$-modules attached to $e$ are
$$
M(e,+)=M(e,\e),\quad M(e,-)=M(e,\sgn)\quad\mbox{and}\quad M(e,\rho).
$$
These are concentrated in $\Sn_3$-degree $e$. The modules $M(e,\pm)$ are one-dimensional, we denote by $\mm{e}_{\pm}$ its generators and omit the subscript if there is no place for confusion.

\smallbreak

We can describe the $\ku\Sn_3$-action on $M(e,\rho)$ using the canonical representation of $\ku\Sn_3$ on the vector space spanned by $\{{\bf 1}, {\bf 2}, {\bf 3}\}$. In fact, $\ku\{{\bf 1}, {\bf 2}, {\bf 3}\}$ decomposes into the direct sum $(\ku,\varepsilon)\oplus(\ku^2,\rho)$ where the submodules of weight $(\ku,\varepsilon)$ and $(\ku^2,\rho)$ are spanned by $\left\{{\bf 1}+{\bf 2}+{\bf 3}\right\}$ and $\left\{({\bf 1}-{\bf 2}), ({\bf 2}-{\bf 3})\right\}$, respectively.

Another special basis of $M(e,\rho)$ is the set of symbol $\{\mm{\tau}_{\rho}, \mm{\tau^{-1}}_{\rho}\}$ where
$$\mm{\tau^{\pm1}}_{\rho}=\zeta^{\mp1}{\bf 1}+\zeta^{\pm1}{\bf 2}+{\bf 3}$$
This basis is special because it gives the following isomorphisms of $\ku\Sn_3$-modules
\begin{align}\label{eq:iso of 2 dim simple objects}
M(e,\rho)&\longrightarrow M(\tau,1),\quad\mm{\tau^{\pm1}}_\rho\longmapsto\mm{\tau^{\pm1}}_1.
\end{align}
We omit the subscript in $\mm{\tau^{\pm1}}_{\rho}$ if there is no place for confusion.

\subsubsection{Fusion rules}\label{subsub:fusion rules} Let $W$ and $N$ be simple $\DSn$-modules. We want to decompose the tensor products $W\ot N$ into a direct sum of simple $\DSn$-modules. First, we have a decomposition into the direct sum of two submodules which are not necessarily simple:
\begin{align}\label{eq:first decomposition}
W\ot N=\left(\bigoplus_{g\in\Sn_3}W[g]\ot N[g]\right)\oplus\left(\bigoplus_{g,h\in\Sn_3,\,g\neq h}W[g]\ot N[h]\right).
\end{align}
Note that the first submodule is zero if $\supp W\neq\supp N$.

This decomposition is useful for us because each submodule has a basis which is a transitive $\Sn_3$-set in the sense of the next lemma. Let $\bB_W$ and $\bB_N$ be the bases of $W$ and $N$ given in $\S$\ref{subsub:s3 sigma simples}, \ref{subsub:s3 tau simples} and \ref{subsub:s3 e simples}. Then the sets
\begin{align}\label{eq:basis of first decomposition}
\bB_1=\bigcup_{g\in\Sn_3}\bB_W[g]\ot \bB_N[g]\quad\mbox{and}\quad\bB_2=\bigcup_{g,h\in\Sn_3,\, g\neq h}\bB_W[g]\ot\bB_N[h]
\end{align}
are bases of the first submodule and the second one in \eqref{eq:first decomposition}, respectively.

\begin{lema}\label{le:transitive action}
If $\alpha,\beta\in\bB_\ell$, $\ell=1,2$, then there is $\pi\in\Sn_3$ such that $\pi\cdot\alpha=\lambda\beta$ for some non-zero scalar $\lambda$.
\end{lema}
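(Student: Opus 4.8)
The plan is to treat the two bases $\bB_1$ and $\bB_2$ separately, using the explicit descriptions of the $\Sn_3$-action on $M(\sigma,\pm)$, $M(\tau,\ell)$ and $M(e,\rho)$, $M(e,\pm)$ recorded in $\S$\ref{subsub:s3 sigma simples}--\ref{subsub:s3 e simples}. The crucial structural observation is that each $W$ among our simple modules carries an $\Sn_3$-action that permutes a distinguished basis \emph{up to nonzero scalars}: indeed for $M(\sigma,\pm)$ the vector $\mm{\sigma\tau^t}$ is sent by $\tau$ and $\sigma$ to $\pm\mm{\sigma\tau^{t\pm1}}$; for $M(\tau,\ell)$ the vector $\mm{\tau^{\pm1}}$ is sent to a scalar multiple of $\mm{\tau^{\pm1}}$ or $\mm{\tau^{\mp1}}$; for $M(e,\rho)$ in the basis $\{\mm{\tau},\mm{\tau^{-1}}\}$ the same holds via the isomorphism \eqref{eq:iso of 2 dim simple objects}; and $M(e,\pm)$ is one-dimensional. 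Consequently, for any $g\in\Sn_3$ and any basis vector $b\in\bB_W$ of $\Sn_3$-degree $h$, we have $g\cdot b=\lambda\, b'$ for a nonzero scalar $\lambda$ and a basis vector $b'\in\bB_W$ of degree $ghg^{-1}$; the same for $\bB_N$.

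First I would make this ``monomial action'' precise as a preliminary remark: inspecting the three families shows that $\Sn_3$ acts on $\bB_W$ (and on $\bB_N$) by signed/scalar permutations, and moreover that this action is \emph{transitive} on $\bB_W$ whenever $\dim W>1$ (for $M(\sigma,\pm)$ the cycle $\tau$ already permutes the three basis vectors cyclically; for the two-dimensional modules $\sigma$ swaps the two basis vectors up to a scalar), while if $\dim W=1$ the single basis vector is the whole of $\bB_W$. Granting this, a pure tensor $\alpha=b_W\ot b_N$ satisfies $g\cdot\alpha=(g\cdot b_W)\ot(g\cdot b_N)=\mu\,(b_W'\ot b_N')$ with $\mu\neq 0$, so $\Sn_3$ permutes \emph{both} $\bB_1$ and $\bB_2$ by scalar permutations: the only thing left to prove is transitivity of this permutation action on each of $\bB_1$ and $\bB_2$.

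For $\bB_1=\bigcup_{g}\bB_W[g]\ot\bB_N[g]$: if the common support $\cO:=\supp W=\supp N$ is a single conjugacy class (which it must be, otherwise $\bB_1=\varnothing$ and there is nothing to prove), pick $\alpha=b_W\ot b_N$ and $\beta=c_W\ot c_N$ in $\bB_1$, say $\deg\alpha$-part has degree $g$ and $\deg\beta$-part degree $h$, both in $\cO$. Choose $\pi\in\Sn_3$ with $\pi g\pi^{-1}=h$; then $\pi\cdot b_W$ and $\pi\cdot c_W$ lie in the same degree-$h$ space $W[h]$, which I claim is at most one-dimensional for all our $W$ (true: the support of each simple is either $\{e\}$ with $W$ one-dimensional, or has three, resp. two, elements each occurring with multiplicity one). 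Hence $\pi\cdot b_W=\lambda_W c_W$ and likewise $\pi\cdot b_N=\lambda_N c_N$ with $\lambda_W,\lambda_N\neq 0$, giving $\pi\cdot\alpha=\lambda_W\lambda_N\,\beta$. For $\bB_2=\bigcup_{g\neq h}\bB_W[g]\ot\bB_N[h]$ the same argument applies once one checks that conjugation by $\Sn_3$ acts transitively on the set of ordered pairs $(g,h)\in\supp W\times\supp N$ with $g\neq h$; this is a small finite check in each relevant combination of conjugacy classes ($\cO_\sigma\times\cO_\sigma$, $\cO_\tau\times\cO_\tau$, $\cO_\sigma\times\cO_\tau$, etc.), using that the diagonal conjugation action of $\Sn_3$ on pairs of distinct elements of a single class, or on pairs from two different nontrivial classes, is transitive here because $\#\Sn_3=6$ is large enough relative to the number of such pairs.

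The main obstacle is not conceptual but bookkeeping: one must verify, across the handful of pairs $(W,N)$ that actually occur, that (i) each homogeneous component $W[g]$, $N[h]$ of our simple modules is at most one-dimensional, and (ii) the diagonal conjugation action of $\Sn_3$ on the relevant index sets (single conjugacy class, or pairs of distinct elements) is transitive. Both are immediate from the explicit bases in $\S$\ref{subsub:s3 sigma simples}--\ref{subsub:s3 e simples} together with the list of conjugacy classes of $\Sn_3$, so the proof reduces to recording these two finite verifications and assembling them as above.
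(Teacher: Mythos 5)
Your overall strategy coincides with the paper's: the $\Sn_3$-action permutes the distinguished bases up to nonzero scalars, so the lemma reduces to transitivity of the induced permutation action on $\bB_1$ and $\bB_2$. However, the step that carries this out rests on a false claim. You assert that every homogeneous component $W[h]$ of every simple $\DSn$-module is at most one-dimensional, justified by ``the support of each simple is either $\{e\}$ with $W$ one-dimensional, or has three, resp.\ two, elements each occurring with multiplicity one.'' This fails for $M(e,\rho)$: it is two-dimensional and concentrated in $\Sn_3$-degree $e$, so $M(e,\rho)[e]$ is two-dimensional. Consequently the deduction ``$\pi\cdot b_W$ and $c_W$ lie in the same degree-$h$ component, hence are proportional'' breaks down whenever a tensor factor is $M(e,\rho)$ -- and such cases do occur in the applications (the action maps on the weight-$(e,\rho)$ submodules $\fV$, $\fU$ and on the Verma module $\fM(e,\rho)$). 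Concretely, for $W=M(\sigma,-)$ and $N=M(e,\rho)$ the elements $\mm{12}\ot\mm{\tau}_\rho$ and $\mm{12}\ot\mm{\tau^{-1}}_\rho$ of $\bB_2$ have the same degree pair $((12),e)$, so your recipe would allow $\pi=e$, which does not send one to a multiple of the other; one must take $\pi=(12)$ and use the explicit action on the basis $\{\mm{\tau}_\rho,\mm{\tau^{-1}}_\rho\}$, not just degree bookkeeping.

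The paper's proof avoids this by working with the symbols rather than the $G$-degrees: in the chosen bases one has $\pi\,\mm{g}=\lambda\,\mm{\pi g\pi^{-1}}$, where for $M(e,\rho)$ the labels are $\tau^{\pm1}$ even though the degree is $e$, and the transitivity check is then performed on pairs of labels. This is not a cosmetic difference, because a purely degree-based argument cannot be patched: for $W=N=M(e,\rho)$ all four vectors $\mm{\tau^{\epsilon}}_\rho\ot\mm{\tau^{\delta}}_\rho$ lie in $\bB_1$ by degree, yet $\mm{\tau}_\rho\ot\mm{\tau}_\rho$ and $\mm{\tau}_\rho\ot\mm{\tau^{-1}}_\rho$ lie in different $\Sn_3$-orbits up to scalars, since every $\pi$ either fixes or simultaneously flips both labels. (In the paper's applications one factor is always $V\simeq\oV\simeq M(\sigma,-)$, and there the label-based argument goes through.) Your preliminary observation about the monomial action on $\{\mm{\tau}_\rho,\mm{\tau^{-1}}_\rho\}$ via \eqref{eq:iso of 2 dim simple objects} is exactly the right ingredient; the transitivity step should be rerun on the labels, as in the paper, in place of the one-dimensionality claim.
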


\begin{proof}
The sets $\bB_W[g]$ and $\bB_N[g]$, $g\in\Sn_3$, are either empty or have only one element $\mm{g}$ except to $M(e,\rho)$, but in this case the basis is $\{\mm{\tau}_{\rho},\mm{\tau^{-1}}_{\rho}\}$. In these bases, we see from the definition that $\pi\mm{g}=\lambda\mm{\pi g\pi^{-1}}$ for some non-zero scalar $\lambda$. We conclude by remarking that $\Sn_3$ acts transitively by conjugation on the sets $\{g\times g\mid g\in\supp W\cap\supp N\}$ and $\{g\times h\mid g\in\supp W,\, h\in\supp N,\, g\neq h\}$.
\end{proof}

As a consequence of the above lemma we have the next remark wich will be useful in $\S$\ref{sec:S3} where the action of $V$, or $\oV$, on $N$ will play the role of $\mu$.

\begin{Rem}\label{obs:description submodule gen by a submodule 1}
Let $\mu:W\ot N\rightarrow N'$ be a map of $\DSn$-modules. Assume there is $\alpha\in\bB_1$, respectively $\alpha\in\bB_2$, such that $\mu(\alpha)=0$. Hence $\mu$ restricted to $\bigoplus_{g\in\Sn_3}W[g]\ot N[g]$, respectively $\bigoplus_{g,h\in\Sn_3,\,g\neq h}W[g]\ot N[h]$, is zero since $\Sn_3$ acts transitively on the basis $\bB_1$, respectively $\bB_2$.
\end{Rem}

\bigbreak

We next list the precise fusion rules only for those tensor products which will appear in Section \ref{sec:S3}. We give the assignments (or describe the submodules) which realize the listed isomorphisms but we leave to the reader the verification that these really are maps of $\DSn$-modules (or $\DSn$-submodules).
\begin{itemize}
\item $M(e,-)\ot M(e,-)\simeq M(e,+)$ and $M(\sigma,\pm)\ot M(e,-)\simeq M(\sigma,\mp)$.
\end{itemize}
The isomorphisms are given by $m\ot\mm{e}\longmapsto m$.
\smallbreak

\begin{itemize}
\item $M(e,\rho)\ot M(e,-)\simeq M(e,\rho)$ and
\item $M(\tau,\ell)\ot M(e,-)\simeq M(\tau,\ell)$ for all $\ell=0,1,2$.
\end{itemize}
The assignments $\mm{\tau^{\pm1}}\ot\mm{e}\longmapsto\pm\, \mm{\tau^{\pm1}}$ give these isomorphisms.

\smallbreak

In the sequel, by abuse of notation, $i\ell$ and $\ell+i$ denote the multiplication and sum module $3$.
\begin{itemize}
\item $M(\tau,\ell)\ot\,M(\tau,\ell)\simeq M(e,+)\oplus M(e,-)\oplus M(\tau,2\ell)$ for all $\ell=0,1,2$.
\end{itemize}
We obtain this isomorphism keeping in mind that
$$
M(e,\pm)\simeq\ku\left\{\mm{\tau}_\ell\ot\mm{\tau^{-1}}_\ell\pm\mm{\tau^{-1}}_\ell\ot\mm{\tau}_\ell\right\}\,\mbox{and}\, M(\tau,2\ell)\simeq\ku\left\{\mm{\tau}_\ell\ot\mm{\tau}_\ell,\mm{\tau^{-1}}_\ell\ot\mm{\tau^{-1}}_\ell\right\}.
$$

\smallbreak
\begin{itemize}
\item $M(\tau,\ell)\ot M(e,\rho)\simeq M(\tau,\ell+1)\oplus M(\tau,\ell+2)$ for all $\ell=0,1,2$.
\end{itemize}
The isomorphism follows by considering the submodules
\begin{align}\label{eq:tau en tau rho}
\left\{\mm{\tau^{\pm1}}_\ell\ot \mm{\tau^{\pm1}}_\rho\right\}\quad\mbox{and}\quad\left\{\mm{\tau^{\pm1}}_\ell\ot \mm{\tau^{\mp1}}_\rho\right\}.
\end{align}

\smallbreak
\begin{itemize}
\item $M(\tau,i)\ot\,M(\tau,j)\simeq M(e,\rho)\oplus M(\tau,k)$ with $\{i,j,k\}=\{0,1,2\}$.
\end{itemize}
Here we use that
\begin{align}\label{eq:tau tensor tau}
\begin{split}
M(e,\rho)\simeq\ku\left\{\mm{\tau}_i\ot \mm{\tau^{-1}}_j,\mm{\tau^{-1}}_i\ot\mm{\tau}_j\right\}\quad\mbox{and}\\
\quad
M(\tau,k)\simeq\ku\left\{\mm{\tau}_i\ot\mm{\tau}_j,\mm{\tau^{-1}}_i\ot\mm{\tau^{-1}}_j\right\}.
\end{split}
\end{align}

\smallbreak
\begin{itemize}
\item $M(\tau,\ell)\ot M(\sigma,-)\simeq M(\sigma,+)\oplus M(\sigma,-)\simeq M(\sigma,-)\ot M(\tau,\ell)$ for all $\ell=0,1,2$.
\end{itemize}
In the first isomorphism $\mm{\sigma\tau^{i}}_{\pm}\in M(\sigma,\pm)$ identifies with the element
\begin{align}\label{eq:tau tensor sigma}
\zeta^{i\ell}\mm{\tau}_\ell\ot\mm{\sigma\tau^{i+1}}_-\mp\zeta^{-i\ell}\mm{\tau^{-1}}_\ell\ot\mm{\sigma\tau^{i+2}}_-
\quad\mbox{for }i=0,1,2,
\end{align}
meanwhile in the second isomorphism, $\mm{\sigma\tau^{i}}_{\pm}$, for $i=0,1,2$, identifies with
\begin{align}\label{eq:sigma tensor tau}
\zeta^{i\ell}\mm{\sigma\tau^{i+2}}_-\ot\mm{\tau}_\ell\mp\zeta^{-i\ell}\mm{\sigma\tau^{i+1}}_-\ot\mm{\tau^{-1}}_\ell
=\zeta^{i\ell}(1\pm\sigma\tau^i)\mm{\sigma\tau^{i+2}}_-\ot \mm{\tau}_\ell
\end{align}

\smallbreak
\begin{itemize}
\item $M(\tau,\ell)\ot M(\sigma,+)\simeq M(\sigma,+)\oplus M(\sigma,-)$ for all $\ell=0,1,2$.
\end{itemize}
Here we take the assignments $\zeta^{i\ell}\mm{\tau}_\ell\ot\mm{\sigma\tau^{i+1}}_+\pm\zeta^{i\ell}
\mm{\tau^{-1}}_\ell\ot\mm{\sigma\tau^{i+2}}_+\longmapsto\mm{\sigma\tau^{i}}_{\pm}$ for $i=0,1,2$.

\smallbreak
\begin{itemize}
\item $M(\sigma,-)\ot M(\sigma,\pm)\simeq M(e,\mp)\oplus M(e,\rho)\oplus\bigoplus_{\ell=0, 1, 2} M(\tau,\ell)$.
\end{itemize}
For this isomorphism we use that $\ku\left\{\sum_{i=0}^2\mm{\sigma\tau^i}_-\ot\mm{\sigma\tau^i}_\pm\right\}$ is a one-dimensional submodule; the maps
\begin{align}\label{eq:rho en sigma sigma}
\begin{split}
M(e,\rho)\longrightarrow M(\sigma,-)\ot M(\sigma,-),\quad \mm{\tau^j}_{\rho}\longmapsto\sum_{i=0}^2\zeta^{-ij}\mm{\sigma\tau^i}_-\ot\mm{\sigma\tau^i}_-\quad\mbox{and}
\\
M(e,\rho)\longrightarrow M(\sigma,-)\ot M(\sigma,+),\quad \mm{\tau^j}_{\rho}\longmapsto\sum_{i=0}^2j\zeta^{-ij}\mm{\sigma\tau^i}_-\ot\mm{\sigma\tau^i}_+,
\end{split}
\end{align}
with $j=\pm1$, define inclusions of $\DSn$-modules and $M(\tau,\ell)$ is included as $\DSn$-module by identifying the element $\mm{\tau^i}_\ell$ of $M(\tau,\ell)$ with the element
\begin{align}\label{eq:tau en sigma sigma}
\begin{split}
(\zeta^\ell+\zeta^{-\ell}\tau^{-i}+\tau^i)\,\mm{\sigma}_-\ot\mm{\sigma\tau^i}_-\,\in\, M(\sigma,-)\ot M(\sigma,-),\quad\mbox{respectively}\\
i(\zeta^\ell+\zeta^{-\ell}\tau^{-i}+\tau^i)\,\mm{\sigma}_-\ot\mm{\sigma\tau^i}_+\,\in\, M(\sigma,-)\ot M(\sigma,+),
\end{split}
\end{align}
for $i=\pm1$ and $\ell=0,1,2$.

\smallbreak
\begin{itemize}
\item $M(\sigma,-)\ot M(e,\rho)\simeq M(\sigma,+)\oplus M(\sigma,-)$.
\end{itemize}
Here we have to identify $\mm{\sigma\tau^i}_{\pm}\in M(\sigma,\pm)$, for $i=1,2,3$, with the element
\begin{align}\label{eq:sigma tensor rho}
\zeta^{i}\mm{\sigma\tau^{i}}_-\ot\mm{\tau}_\rho\mp\zeta^{-i}\mm{\sigma\tau^{i}}_-\ot \mm{\tau^{-1}}_\rho
=\zeta^{i}(1\pm\sigma\tau^i)\mm{\sigma\tau^{i}}_-\ot \mm{\tau}_\rho.
\end{align}

\section{A quantum group at a non-abelian group}\label{sec:QnonabG}

Through this section, we fix a finite non-abelian group $G$ and a Yetter-Drinfeld module $V\in\ydg$ such that its Nichols algebra $\BV(V)$ is finite-dimensional. We denote by $\D$ the Drinfeld double of the bosonization $\BV(V)\#\ku G$. For shortness we say that $\D$ is a {\it quantum group at a non-abelian group}.

In the first part of the section we describe the algebra structure of $\D$. Then we introduce and study the Verma modules for $\D$.

\begin{definition}\label{def:oV}
We set $\oV$ to be $V^*$ endowed with the Yetter-Drinfeld module structure over $\ku^G$ defined by the following properties:
\begin{align}\label{eq:oV}
\langle f\cdot y,x\rangle=\langle f,\cS(x\_{-1})\rangle\langle y,x\_{0}\rangle\quad\mbox{and}\quad
\langle y,g\cdot x\rangle=\langle y\_{-1},g\rangle\langle y\_{0},x\rangle
\end{align}
for every $y\in\oV$, $x\in V$, $g\in G$ and $f\in\ku^G$.
\end{definition}

It is a straightforward computation to check that these structures satisfy the compatibility for Yetter-Drinfeld modules. Also, this is a consequence of the next lemma. Recall the Hopf algebra structure of a bosonization in $\S$\ref{subsub:boson}.

\begin{lema}\label{le:iso dual op}
The algebra map $\varphi:\BV(\overline{V})\#\ku^G\longrightarrow(\BV(V)\#\ku G)^{*op}$
defined by
\begin{align*}
\langle\varphi(f),g\rangle&=\langle f,g\rangle\quad\mbox{and}\quad\langle\varphi(f),\BV^n(V)\#\ku G\rangle=0,\\
\langle\varphi(y),x\#g\rangle&=\langle y,x\rangle\quad\mbox{and}\quad\langle\varphi(y),\BV^m(V)\#\ku G\rangle=0
\end{align*}
for all $g\in G$, $f\in\ku^G$, $x\in V$, $y\in\oV$, $n>0$ and $m\neq1$, is an isomorphism of graded Hopf algebras.

In particular, the Hilbert series of the Nichols algebras $\BV(V)$ and $\BV(\oV)$ are equals.
\end{lema}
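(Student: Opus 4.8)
The plan is to construct $\varphi$ explicitly on generators and verify it is a well-defined algebra map, then check it is bijective by a dimension/filtration argument, and finally that it respects the coalgebra structures. First I would recall that $(\BV(V)\#\ku G)^{*op}$ carries two distinguished Hopf subalgebras, namely $(\ku G)^{*op}=\ku^G$ (dual to the group algebra part, which sits in degree $0$) and a copy of $V^*$ sitting in degree $1$ (dual to $V=\BV^1(V)$); here I use that the bosonization $\BV(V)\#\ku G$ is a \emph{graded} Hopf algebra with $(\BV(V)\#\ku G)^n=\BV^n(V)\#\ku G$, so its graded dual inherits a grading where degree $n$ is the dual of $\BV^n(V)\#\ku G$. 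The formulas in the statement say precisely that $\varphi$ sends $\ku^G\subseteq\BV(\oV)\#\ku^G$ isomorphically onto the degree-$0$ part $\ku^G$, and sends $\oV=\BV^1(\oV)$ into the degree-$1$ part, pairing $y\in\oV$ with $x\#g\in V\#\ku G$ via $\langle y,x\rangle$ (ignoring $g$, which is consistent because $y$ is declared to vanish on $\BV^m(V)\#\ku G$ for $m\neq 1$).

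Next I would check that these assignments extend to an algebra map out of $\BV(\oV)\#\ku^G$. Since the source is generated as an algebra by $\ku^G$ and $\oV$ subject to (i) the relations of $\ku^G$, (ii) the relations defining $\BV(\oV)$, and (iii) the cross relations $f y=(f\_1\cdot y)\# f\_2$ of the bosonization $\S\ref{subsub:boson}$, it suffices to verify that the images satisfy the corresponding identities in $(\BV(V)\#\ku G)^{*op}$. The images of $\ku^G$ trivially satisfy (i). For (iii), the key point is that the Yetter--Drinfeld structure on $\oV$ was \emph{defined} in Definition \ref{def:oV} exactly so that the pairing is compatible: one unwinds $\langle\varphi(f)\varphi(y),x\#g\rangle$ using the $op$-multiplication and the coproduct formula $\Delta(x\#g)=(x\#g)\ot(1\#g)+\dots$ in the bosonization, and matches it with $\langle\varphi((f\_1\cdot y)\#f\_2),x\#g\rangle$ using the first formula in \eqref{eq:oV} (the one involving $\cS(x\_{-1})$). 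For (ii), I would argue that $\varphi$ is automatically defined on the tensor algebra $T(\oV)=T(V^*)$, that $T(V^*)$ is the graded dual of $T(V)$ as a braided Hopf algebra in $\ydg$ (with the dual braiding, which here becomes the Yetter--Drinfeld braiding on $\ku^G$-modules via the equivalence of $\S\ref{subsub:DH}$), and that under graded duality the defining ideal $\cJ(V)$ of $\BV(V)$ — the maximal Hopf ideal generated in degrees $\geq 2$ — is annihilated by $\cJ(V^*)^{\perp}$; more precisely $\BV(V)^*\cong\BV(V^*)$ canonically for finite-dimensional Nichols algebras, so $\varphi$ descends to $\BV(\oV)$.

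Then I would prove bijectivity. Both $\BV(\oV)\#\ku^G$ and $(\BV(V)\#\ku G)^{*op}$ are graded with finite-dimensional homogeneous components, and $\varphi$ is graded of degree $0$. In degree $0$ it is the identity $\ku^G\to\ku^G$, an iso. In degree $1$ it is the pairing-induced map $\oV\ot\ku^G\to(V\#\ku G)^*$, $y\# f\mapsto\big(x\#g\mapsto\langle y,x\rangle f(g)\big)$, which is an iso since $\langle\,,\,\rangle:V^*\ot V\to\ku$ and the group pairing are perfect. Since $\BV(\oV)\#\ku^G$ is generated in degrees $0$ and $1$ and $\varphi$ is an algebra map, its image is the subalgebra of $(\BV(V)\#\ku G)^{*op}$ generated in degrees $0$ and $1$; but the graded dual of a Nichols-algebra bosonization is again generated in degrees $0$ and $1$ (this is essentially the statement that $\BV(V^*)$ is generated in degree one together with the group part), so $\varphi$ is surjective. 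A surjective graded map between graded vector spaces whose homogeneous components have equal — a priori not yet known — dimensions would be forced to be an iso; to get the dimension equality I instead note that surjectivity already gives $\dim\BV^n(\oV)\geq\dim\BV^n(V)$ for all $n$ (counting ranks over $\ku^G$), and applying the same construction with the roles of $V$ and $\oV=V^*$ interchanged (using $V^{**}\cong V$) gives the reverse inequality, hence equality in each degree, hence $\varphi$ is bijective; this simultaneously yields the ``in particular'' clause about equal Hilbert series. Finally I would record that $\varphi$ is a morphism of coalgebras: it suffices to check $\Delta\circ\varphi=(\varphi\ot\varphi)\circ\Delta$ on the generators in $\ku^G$ and $\oV$, where it reduces to the compatibility of the coproduct of $\BV(\oV)\#\ku^G$ (formula \eqref{eq:boson}) with the \emph{product} of $\BV(V)\#\ku G$ under the pairing $\varphi$, which is again a direct dualization.

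The main obstacle I expect is the bookkeeping around braidings and signs when dualizing the braided Hopf algebra $T(V)$ (equivalently $\BV(V)$) inside $\ydg$: one must be careful that the braided-dual Hopf structure on $T(V^*)$, transported through the equivalence ${}_{\ku^G}\cM\simeq{}_{\ku G}\cM$ (the two sides of the Drinfeld double), really produces the Yetter--Drinfeld module $\oV$ as normalized in Definition \ref{def:oV}, with the antipode $\cS(x\_{-1})$ appearing in the right place — this is exactly what makes $\varphi$ multiplicative on the cross relations, and it is the one spot where a sign or an inverse-antipode error would be easy to commit. Everything else is a routine, if lengthy, dualization of the bosonization Hopf structure.
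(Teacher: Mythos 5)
Your proposal is correct and follows essentially the same route as the paper's proof: both reduce the lemma to the structure theorem that the graded dual of a Nichols-algebra bosonization is again a Nichols-algebra bosonization (you invoke $\BV(V)^{*}\cong\BV(V^{*})$; the paper cites the same fact in the form $(\BV(V)\#\ku G)^{*op}\simeq R\#\ku^G$ with $R$ a Nichols algebra, after \cite[Section 2]{B}), and then verify compatibility of $\varphi$ with the Hopf and Yetter--Drinfeld structures on the degree-$0$ and degree-$1$ components, with the key check being exactly the one you flag involving $\cS(x_{(-1)})$ in Definition~\ref{def:oV}. One small remark: once you have accepted that $\varphi$ factors through $\BV(\oV)$ \emph{because} $\BV(V)^{*}\cong\BV(V^{*})$, bijectivity (and hence equality of Hilbert series) is already immediate from the degree-$0$/$1$ identification and generation in low degree on both sides; your closing two-sided dimension argument, which moreover quietly assumes that the ``double dual'' Yetter--Drinfeld module $\overline{\oV}$ is $V$, is therefore redundant rather than load-bearing.
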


\begin{proof}
We can deduce that $(\BV(V)\#\ku G)^{*op}\simeq R\#\ku^G$ where $R$ is the Nichols algebra of its homogeneous space of degree $1$ following for instance \cite[Section 2]{B}. Also, we see from the definition in the statement that $\varphi:\oV\#\ku^G\longrightarrow(\BV(V)\#\ku G)^{*op}$, with $\varphi(y\#f)=\varphi(f)\varphi(y)$ for all $y\in\oV$ and $f\in\ku^G$, is a linear map which is bijective in degree $0$ and $1$. Therefore the lemma follows if we show that
\begin{align}\label{eq:coradical del dual}
\Delta\varphi(\delta_g)&=\sum_{t\in G}\varphi(\delta_t)\ot\varphi(\delta_{t^{-1}g}),\quad
\varphi(\delta_g\delta_h)=\varphi(\delta_h)\varphi(\delta_g),\\
\label{eq:inf braiding del dual}
\Delta\varphi(y)&=\varphi(y)\ot1+\varphi(y\_{-1})\ot\varphi(y\_{0})\quad\mbox{and}\quad
\varphi(y)\varphi(\delta_g)=\sum_{t\in G}\varphi(\delta_{t^{-1}g})\varphi(\delta_{t}\cdot y)
\end{align}
for all $g,h\in G$ and $y\in\oV$. In fact, \eqref{eq:coradical del dual} ensures that $\varphi_{|\ku^G}$ is a Hopf algebra map. By \eqref{eq:inf braiding del dual}, we determine that $\oV$ is the space of coinvariants in degree $1$ of $(\BV(V)\#\ku G)^{*op}$ with respect to the projection over $\ku^G$ and the corresponding Yetter-Drinfeld structure is given by Definition \ref{def:oV}. Hence $R$ is the Nichols algebra of $\oV$.

We check the first equality of \eqref{eq:inf braiding del dual}, the remainder ones can be checked in a similar way. As $\BV(V)\#\ku G$ is a graded Hopf algebra it is enough to see that
\begin{align*}
\langle\Delta\varphi(y),a\ot(x\#b)\rangle=&\langle\varphi(y),a(x\#b)\rangle=\langle\varphi(y),(a\cdot x)\#ab\rangle=\langle y,a\cdot x\rangle=\langle y\_{-1},a\rangle\langle y\_{0},x\rangle=\langle \varphi(y\_{-1}),a\rangle\langle\varphi(y\_{0}),x\rangle\\
=&\langle \varphi(y),a\rangle\langle 1,x\#b\rangle+\langle \varphi(y\_{-1}),a\rangle\langle\varphi(y\_{0}),x\#b\rangle=\langle \varphi(y)\ot1+\varphi(y\_{-1})\ot\varphi(y\_{0}),a\ot(x\#b)\rangle
\end{align*}
for all $a,b\in G$ and $x\in V$.
\end{proof}

\begin{convention}
We identify the Hopf subalgebra $(\BV(V)\#\ku G)^{*op}$ of $\D$ with $\BV(\oV)\#\ku^G$ by invoking the above lemma.
\end{convention}

\begin{lema}
The quantum group $\D$ at a non-abelian group can be presented  as an algebra generated by the elements belonging to $V$, $\oV$, $\ku G$ and $\ku^G$ subject to their relations in $\BV(V)$, $\BV(\oV)$, $\ku G$ and $\ku^G$, plus the commutation rules
\begin{align}
\label{eq:reglas DH boson}g\,x=&(g\cdot x)\,g,&\delta_g\,y=&\sum_{t\in G}(\delta_t\cdot y)\delta_{t^{-1}g},\\
\label{eq:reglas DH boson cruzadas}\delta_g\,x=&\sum_{t\in G}\langle\delta_t,x\_{-1}\rangle x\_{0}\delta_{t^{-1}g},
&y\,g=&\langle y\_{-1},g\rangle\, gy\_{0},
\end{align}
\begin{align}
\label{eq:reglas DH xy}
y\,x-\langle y\_{-1},x\_{-1}\rangle\,x\_{0}y\_{0}&=\langle y,x\rangle1+\langle y\_{-2},x\_{-2}\rangle
\langle y\_{0},\cS(x\_{0})\rangle\,x\_{-1}y\_{-1},\\
\label{eq:double of G}\delta_h\,g&=g\,\delta_{g^{-1}hg}.
\end{align}
for all $g,h\in G$, $x\in V$ and $y\in\oV$.
\end{lema}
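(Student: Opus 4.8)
The plan is to establish the presentation by exhibiting $\D$ as the Drinfeld double $\D(\BV(V)\#\ku G)$ and translating the defining multiplication formula \eqref{eq:DH} into the displayed commutation rules. First I would recall that by definition $\D=\D(\BV(V)\#\ku G)$ equals $(\BV(V)\#\ku G)\ot(\BV(V)\#\ku G)^{*\,op}$ as a coalgebra, with multiplication given by \eqref{eq:DH}; in particular $H:=\BV(V)\#\ku G$ and $H^{*op}$ are Hopf subalgebras of $\D$. Using the Convention that identifies $H^{*op}$ with $\BV(\oV)\#\ku^G$ via the isomorphism $\varphi$ of Lemma \ref{le:iso dual op}, it follows that $\D$ is generated as an algebra by $V$, $\oV$, $\ku G$ and $\ku^G$, and that the relations holding inside $H=\BV(V)\#\ku G$ (namely the relations of $\BV(V)$, of $\ku G$, and the cross relations \eqref{eq:reglas DH boson} on the left and \eqref{eq:double of G}) as well as those holding inside $H^{*op}=\BV(\oV)\#\ku^G$ (the relations of $\BV(\oV)$, of $\ku^G$, and the cross relation \eqref{eq:reglas DH boson} on the right, using \eqref{eq:boson} applied to the bosonization $\BV(\oV)\#\ku^G$) are satisfied. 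So the only genuinely new relations to verify are the ``mixed'' ones \eqref{eq:reglas DH boson cruzadas} and \eqref{eq:reglas DH xy}, which express how an element of $H^{*op}$ past-multiplied by an element of $H$ can be rewritten, i.e. the straightening relations that make the multiplication map $H\ot H^{*op}\to\D$ bijective.

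Next I would derive \eqref{eq:reglas DH boson cruzadas} and \eqref{eq:reglas DH xy} by a direct computation from \eqref{eq:DH}. Writing a general element of $H$ as $h$ and of $H^{*op}$ as $f$, the formula \eqref{eq:DH} gives
$$
(1\ot f)(h\ot\e)=\langle f\_{1},h\_{1}\rangle\,\langle f\_{3},\cS_H(h\_{3})\rangle\,(h\_{2}\ot f\_{2}),
$$
and one specializes $h$ and $f$ to the generators. Taking $h=g\in G$ and $f=\delta_g\in\ku^G$ recovers \eqref{eq:double of G}. Taking $h=x\in V$ (so $\Delta_H(x)=x\ot1+x\_{-1}\ot x\_{0}$ by \eqref{eq:boson}) and $f=\delta_g$ yields \eqref{eq:reglas DH boson cruzadas} after pairing against the relevant Sweedler components and using $\cS(1)=1$; similarly taking $h=g$ and $f=\varphi(y)=y\in\oV$, whose coproduct in $H^{*op}$ is $y\ot1+y\_{-1}\ot y\_{0}$ by \eqref{eq:inf braiding del dual}, gives the second relation in \eqref{eq:reglas DH boson cruzadas}. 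The remaining relations \eqref{eq:reglas DH boson} are just the bosonization relation \eqref{eq:boson} inside $H$ and inside $H^{*op}$, so no double computation is needed there. Finally \eqref{eq:reglas DH xy} is the case $h=x\in V$, $f=y\in\oV$: here $\Delta_H(x)$ and $\Delta_{H^{*op}}(y)$ each have two primitive-type terms plus the ``group-like tail'', so the product $y x$ expands into several terms; collecting them, using that the pairing between $\BV(V)$ and $\BV(\oV)$ is concentrated in matching degrees (degree $0$ and degree $1$, by Lemma \ref{le:iso dual op}), and simplifying $\langle y,\cS_H(x)\rangle=-\langle y,x\rangle$ on degree-one elements, produces exactly the stated identity with the term $\langle y,x\rangle 1$ and the group-like correction term.

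Having checked that all the listed relations hold in $\D$, it remains to see that they are a \emph{complete} set of relations, i.e. that the abstract algebra $A$ presented by these generators and relations surjects onto $\D$ with no collapse. The surjection $A\twoheadrightarrow\D$ is clear since all relations hold. For injectivity (equivalently, that $\dim A\le\dim\D=(\dim\BV(V)\#\ku G)^2$) I would argue by a normal-form/PBW-type argument: the mixed relations \eqref{eq:reglas DH boson cruzadas}–\eqref{eq:reglas DH xy} always allow one to move every $y$-letter (and every $\delta$-letter of $\ku^G$) to the \emph{right} of every $x$-letter and every $\ku G$-letter, so every element of $A$ is a linear combination of ordered monomials $(\text{word in }\BV(V)\#\ku G)\cdot(\text{word in }\BV(\oV)\#\ku^G)$; hence $\dim A\le\dim(\BV(V)\#\ku G)\cdot\dim(\BV(\oV)\#\ku^G)$, which equals $\dim\D$ by Lemma \ref{le:iso dual op}. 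The main obstacle I anticipate is precisely this last spanning step: one must be sure that reordering terminates and does not secretly require relations beyond those listed — in particular that the right-hand sides of \eqref{eq:reglas DH boson cruzadas} and \eqref{eq:reglas DH xy} are again ``lower'' in the chosen ordering (the $x$-degree strictly drops in \eqref{eq:reglas DH xy}, and the filtration by $\BV(V)$-degree makes the argument go through), so that the associated graded of $A$ is a quotient of $(\BV(V)\#\ku G)\ot(\BV(\oV)\#\ku^G)$. Once that dimension count matches, the surjection $A\to\D$ is forced to be an isomorphism.
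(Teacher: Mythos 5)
Your proposal is correct and takes essentially the same route as the paper: the relations \eqref{eq:reglas DH boson} are read off from the bosonization \eqref{eq:boson}, while \eqref{eq:reglas DH boson cruzadas}, \eqref{eq:reglas DH xy} and \eqref{eq:double of G} are obtained by specializing the Drinfeld double multiplication formula \eqref{eq:DH} to the generators, exactly as in the paper's two-line proof. The only difference is that you make explicit the completeness of the presentation via the straightening/dimension-count argument, a step the paper leaves implicit in the identification of $\D$ with $(\BV(V)\#\ku G)\ot(\BV(\oV)\#\ku^G)$ as a vector space (your parenthetical that the $\BV(V)$-degree drops in \eqref{eq:reglas DH xy} is slightly imprecise -- in the leading term only the disorder of the monomial drops -- but the termination and spanning argument goes through as you indicate).
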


\begin{proof}
The equations \eqref{eq:reglas DH boson} correspond to the bosonization, see \eqref{eq:boson}. Meanwhile \eqref{eq:reglas DH boson cruzadas}, \eqref{eq:reglas DH xy} and \eqref{eq:double of G} follow from \eqref{eq:DH}.
\end{proof}

\begin{lema}
The subalgebra of $\D$ generated by $\ku G$ and $\ku^G$ is a Hopf subalgebra isomorphic to $\D(G)$ and it is the coradical of $\D$. In particular, $\D$ is non-pointed.
\end{lema}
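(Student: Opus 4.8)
The plan is to establish the three assertions of the statement in turn: (i) the subalgebra $E$ of $\D$ generated by $\ku G$ and $\ku^G$ is a Hopf subalgebra isomorphic to $\D(G)$; (ii) $E$ is the coradical of $\D$; (iii) $\D$ is non-pointed. For (i), recall from the construction of the Drinfeld double in \S\ref{subsub:DH} that $\D=\D(\BV(V)\#\ku G)$ contains $H=\BV(V)\#\ku G$ and $H^{*op}=\BV(\oV)\#\ku^G$ as Hopf subalgebras. Inside $H$ we have the group-likes $\ku G$, and inside $H^{*op}$ we have $\ku^G$; these are Hopf subalgebras of $\D$. The only relation between their generators coming from \eqref{eq:DH} is $\delta_h\,g=g\,\delta_{g^{-1}hg}$, which is exactly \eqref{eq:double of G} and is precisely the defining relation of $\D(G)$ recalled in \S\ref{subsec:DG}. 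Hence the natural map $\D(G)\to E$ is a surjective Hopf algebra map; it is injective because $\D(G)=\ku G\ot\ku^G$ has a basis $\{g\delta_h\}$ whose images are linearly independent in $\D$ (they sit in $H\ot H^*$ in the coalgebra decomposition of $\D$, on the degree-zero part $\ku G\ot\ku^G$). So $E\simeq\D(G)$ as Hopf algebras.

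For (ii), the strategy is to compute the coradical of $\D$ directly. Since $\D(G)$ is semisimple and, over an algebraically closed field, cosemisimple (it is finite-dimensional and its dual $\D(G)^*\cong\D(G)$ is semisimple), $E=\D(G)$ is a cosemisimple coalgebra, hence $E\subseteq\D_0$, the coradical of $\D$. For the reverse inclusion I would use the grading: $\D$ is $\Z$-graded with $\D^n=\bigoplus_{n=j-i}\BV^i(V)\ot\D(G)\ot\BV^j(\oV)$ as recalled in the introduction, and moreover it carries the finer $\N_0\times\N_0$-grading by $(i,j)=(\deg \text{ in }\BV(V),\deg\text{ in }\BV(\oV))$, with $\D^{(0,0)}=\D(G)=E$. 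The comultiplication of $\D$ respects this bigrading because $\BV(V)$ and $\BV(\oV)$ are graded braided Hopf algebras with generators in degree one that are $(1,0)$- resp. $(0,1)$-skew-primitive over $\D(G)$ — this is \eqref{eq:boson} together with the dual statement, i.e. $\Delta(x)=x\ot 1+x\_{-1}\ot x\_{0}$ and $\Delta(y)=y\ot1+y\_{-1}\ot y\_{0}$ where $x\_{-1}\in\ku G$, $y\_{-1}\in\ku^G$. A standard argument (the coradical of a graded coalgebra whose degree-zero part is cosemisimple, under these connectedness-type hypotheses, equals the degree-zero part) then gives $\D_0\subseteq\D^{(0,0)}=E$. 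Concretely, one filters by total degree $i+j$, notes that $\D/\D^{(0,0)}$ in each higher degree is "killed" by iterating $(\mathrm{id}\ot\pi)\Delta$ where $\pi:\D\to E$ is the projection coming from the bigrading, and concludes that no simple subcoalgebra can meet $\bigoplus_{i+j>0}\D^{(i,j)}$. Thus $\D_0=E\simeq\D(G)$.

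For (iii), $\D$ is pointed if and only if $\D_0$ is a group algebra, i.e.\ spanned by group-like elements. But $\D_0\simeq\D(G)$, and $\D(G)$ is not a group algebra since $G$ is non-abelian: for instance $\D(G)$ has irreducible representations of dimension greater than one (the simple modules $M(g,\varrho)$ with $\dim\varrho\cdot\#\cO_g>1$, as recalled in \S\ref{subsec:DG}), whereas a group algebra over $\ku$ of a finite group all of whose irreducibles are — no, more directly: the group-likes of $\D(G)$ are $G\times\widehat{Z(G)}$ or so, in any case $\D(G)$ is noncommutative and not cocommutative while a group algebra is cocommutative; since $\D(G)$ is not cocommutative (again because $G$ is non-abelian, $\ku G\subseteq\D(G)$ is not central-enough / $\Delta(\delta_g)$ is visibly not symmetric in a way that no reordering fixes), $\D(G)$ cannot be a group algebra. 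Hence $\D_0$ is not spanned by group-likes and $\D$ is non-pointed.

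The main obstacle is step (ii), the reverse inclusion $\D_0\subseteq E$: one must be careful that the relevant grading (or rather bigrading) is genuinely a coalgebra grading on all of $\D$, including on the cross relations \eqref{eq:reglas DH xy} relating $y$ and $x$ — the point is that those are \emph{algebra} relations and do not disturb the \emph{coalgebra} bigrading, which is inherited from the coalgebra structure $\D=H\ot H^*$ with $H,H^*$ graded. Once the bigrading of the coalgebra is in place and one knows $\D^{(0,0)}=\D(G)$ is cosemisimple, the coradical computation is the standard lemma on coradicals of coradically-graded-type coalgebras; the only real work is checking the hypotheses, which follow from Lemma \ref{le:iso dual op} and \eqref{eq:boson}.
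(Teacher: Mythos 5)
Your proposal is correct and follows essentially the same route as the paper, which disposes of this lemma in one line by citing Lemma~\ref{le:iso dual op} (identifying $(\BV(V)\#\ku G)^{*op}$ with $\BV(\oV)\#\ku^G$, hence $\ku^G$ inside $\D$) and the relation~\eqref{eq:double of G}; the coradical computation you spell out via the $\N_0\times\N_0$-bigrading of the coalgebra $\D=H\ot H^*$ is exactly the content the authors leave implicit. The only cosmetic blemish is the hesitation near the end about the group-likes of $\D(G)$ (they are $G\times\widehat{G}$, not $G\times\widehat{Z(G)}$), but you rightly discard that tack and settle the non-pointedness via non-cocommutativity of $\ku^G\subset\D(G)$ for $G$ non-abelian.
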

\begin{proof}
It follows from Lemma \ref{le:iso dual op} and \eqref{eq:double of G}.
\end{proof}

Due to the above lemmata, a quantum group at a non-abelian group has a triangular decomposition, that is
\begin{align}\label{eq:D decomposition}
\D=\BV(V)\ot\D(G)\ot\BV(\oV),
\end{align}
and it is a $\Z$-graded algebra by setting
\begin{align}\label{eq:Zgraded}
\deg V=-1,\quad\deg\D(G)=0,\quad\deg\oV=1.
\end{align}

\medbreak

\begin{convention}
We consider $V$ and $\oV$ as Yetter-Drinfeld modules over $\D(G)$ with the adjoint action and the same coaction as $\ku G$-comodule and  $\ku^G$-comodule, respectively.

That is possible because the rules \eqref{eq:reglas DH boson} and \eqref{eq:reglas DH boson cruzadas} guarantee that $V$ and $\oV$ are stable by the adjoint action of $\D(G)$, {\it  i.~e.} $\ad(h) x=h\_{1}x\cS(h\_{2})\in V$ and $\ad(h) y=h\_{1}y\cS(h\_{2})\in\oV$ for all $h\in\D(G)$, $x\in V$ and $y\in\oV$. Also, $\D(G)=\ku G\ot\ku^G$ as coalgebra.
\end{convention}

\smallbreak

We extend these structures to $\BV(V)$ and $\BV(\oV)$. Hence the bosonization
\begin{align}\label{eq:D mas y D menos}
\D^{\leq0}=\BV(V)\#\D(G),\quad\mbox{respectively}\quad\D^{\geq0}=\BV(\oV)\#\D(G),
\end{align}
identifies with the subalgebra of $\D$ generated by $\D(G)$ and $V$, respectively $\oV$. 

\begin{Rem}\label{obs:V as DGmod}
The adjoint action of $\D(G)$ on $V$ coincides with the action defined by the equivalence of categories between $\ydg$ and ${}_{\D(G)}\cM$, see \eqref{eq:ydh a dh}.
\end{Rem}

We would like to remark other facts about $V$ and $\oV$. We refer to \cite{AG1} for details about the item (iv) below.
\begin{lema}\phantomsection\label{le:V oV son duales sobre DG}
\begin{enumerate}\renewcommand{\theenumi}{\roman{enumi}}\renewcommand{\labelenumi}{
(\theenumi)}
\item $\oV$ is the dual object of $V$ in the tensor category ${}_{\D(G)}\cM$.
\item $\BV(V)$ and $\BV(\oV)$ are the Nichols algebras of $V$ and $\oV$ in ${}_{\D(G)}^{\D(G)}\mathcal{YD}$, respectively.
\item $\BV(V)$ and $\BV(\oV)$ are the Nichols algebras of $V$ and $\oV$ in ${}_{\D(G)}\cM$, respectively.
\item $\BV(\oV)$ is isomorphic to the opposite and copposite Hopf algebra $\BV(V)^{*bop}$ in ${}_{\D(G)}\cM$.
\end{enumerate}
\end{lema}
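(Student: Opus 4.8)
The plan is to prove each of the four items in turn, exploiting the explicit duality $\varphi$ from Lemma \ref{le:iso dual op} together with the braided-Hopf-algebra formalism recalled in \S\ref{subsub:BV}--\S\ref{subsub:boson}.

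For item (i), I would work directly from Definition \ref{def:oV} and Remark \ref{obs:V as DGmod}. The evaluation map $\mathrm{ev}:\oV\ot V\to\ku$, $y\ot x\mapsto\langle y,x\rangle$, and the coevaluation $\mathrm{coev}:\ku\to V\ot\oV$ dual to a basis, must be shown to be morphisms of $\D(G)$-modules, after which the snake (triangle) identities hold automatically because they already hold at the level of vector spaces. Since $\D(G)=\ku G\ot\ku^G$ as a coalgebra, it suffices to check $\D(G)$-linearity of $\mathrm{ev}$ on the generators $g\in\ku G$ and $\delta_g\in\ku^G$ separately; this is precisely the content of the two formulas in \eqref{eq:oV}, once one translates the Yetter--Drinfeld action/coaction into the $\D(G)$-action via \eqref{eq:ydh a dh}. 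Concretely, $\langle f\cdot y,x\rangle=\langle f,\cS(x\_{-1})\rangle\langle y,x\_0\rangle$ says that $\delta_g$ acts on $\oV$ by the transpose of the $\ku^G$-coaction on $V$, which is exactly the condition for $\mathrm{ev}$ to be $\ku^G$-linear; and $\langle y,g\cdot x\rangle=\langle y\_{-1},g\rangle\langle y\_0,x\rangle$ is the matching condition for $\ku G$.

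For items (ii) and (iii), the key observation is that the forgetful/category-equivalence functors involved are braided monoidal, so they carry Nichols algebras to Nichols algebras. More precisely, by Lemma \ref{le:iso dual op} and the Convention identifying $(\BV(V)\#\ku G)^{*op}$ with $\BV(\oV)\#\ku^G$ inside $\D$, the subalgebra $\D^{\geq0}=\BV(\oV)\#\D(G)$ is a bosonization, and $\BV(\oV)$ is by construction the Nichols algebra of $\oV$ in $\ydgdual$ — but since $\ydgdual\simeq{}_{\D(G)}^{\D(G)}\mathcal{YD}$ as braided categories (both being equivalent to the center of ${}_{\D(G)}\cM$, or directly by the standard identification), and the Nichols algebra is an intrinsic construction in any braided category, this gives (ii). Then (iii) follows from (ii) by applying the braided monoidal equivalence ${}_{\D(G)}^{\D(G)}\mathcal{YD}\to {}_{\D(G)}\cM$ induced by the quasitriangular structure (equivalently, Remark \ref{obs:V as DGmod} plus functoriality); alternatively, one notes that the grading \eqref{eq:Zgraded} exhibits $\BV(\oV)$ as the subalgebra of $\D$ generated by $\oV$, its defining ideal is generated in degrees $\geq 2$ and is a coideal, and these properties are preserved under the equivalence, so $\BV(\oV)$ satisfies the universal property defining the Nichols algebra in ${}_{\D(G)}\cM$. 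The same argument applies verbatim to $V$.

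For item (iv), I would combine Lemma \ref{le:iso dual op} with item (iii): the isomorphism $\varphi:\BV(\oV)\#\ku^G\xrightarrow{\sim}(\BV(V)\#\ku G)^{*op}$ of graded Hopf algebras restricts, on the Nichols-algebra part, to an isomorphism $\BV(\oV)\simeq\BV(V)^{*bop}$ in the relevant braided category; passing from $\ydg$ to ${}_{\D(G)}\cM$ via the equivalence of \S\ref{subsub:DH}, and using that taking the braided dual (opposite-and-co-opposite graded dual) commutes with braided monoidal equivalences, yields the claim in ${}_{\D(G)}\cM$. The main obstacle I anticipate is bookkeeping of the ``op'' and ``bop'' decorations: one must be careful that the $*op$ in Lemma \ref{le:iso dual op} (which comes from the Drinfeld-double convention $H^{*op}\hookrightarrow\D(H)$) matches the $*bop$ in (iv) (which is the natural braided dual in ${}_{\D(G)}\cM$), and that the braiding used to form $\BV(\oV)$ is indeed the inverse-transpose of the braiding on $V$. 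Once the categorical identifications are set up cleanly, each item reduces to a short verification on generators, so I would not expect any genuinely hard computation — the real work is organizing the equivalences of categories so that the word ``Nichols algebra'' means the same thing in all of them.
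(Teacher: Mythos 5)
Item (i) is correct and matches the paper: you verify that the evaluation pairing intertwines the $\D(G)$-actions on $\oV$ and $V$ by checking the two generator types $g$ and $\delta_g$ against \eqref{eq:oV}, which is exactly what the paper does (using \eqref{eq:reglas DH boson}, \eqref{eq:reglas DH boson cruzadas} to compute the adjoint action).

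There is, however, a genuine error in your treatment of (ii) and (iii). You assert that $\ydgdual\simeq{}_{\D(G)}^{\D(G)}\mathcal{YD}$ as braided categories, ``both being equivalent to the center of ${}_{\D(G)}\cM$.'' Neither half of this is right: $\ydgdual$ is braided equivalent to ${}_{\D(G)}\cM$ itself (not to its center), whereas ${}_{\D(G)}^{\D(G)}\mathcal{YD}$ \emph{is} the center of ${}_{\D(G)}\cM$ and is a strictly larger category (it lives over $\D(\D(G))$, of dimension $|G|^4$). Likewise, the functor ${}_{\D(G)}^{\D(G)}\mathcal{YD}\to{}_{\D(G)}\cM$ you invoke for (iii) is the forgetful functor, not an equivalence. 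The paper's route sidesteps both problems. For (ii), the point is not a category equivalence at all but the Radford biproduct: $\D^{\leq0}=\BV(V)\#\D(G)$ and $\D^{\geq0}=\BV(\oV)\#\D(G)$ are Hopf algebras equipped with Hopf algebra projections onto $\D(G)$, so their spaces of coinvariants, which are exactly $\BV(V)$ and $\BV(\oV)$, are braided Hopf algebras in ${}_{\D(G)}^{\D(G)}\mathcal{YD}$; one then checks they satisfy the defining conditions of a Nichols algebra (coradically graded, generated in degree one, degree-one part equals the primitives). For (iii), one does not transport Nichols algebras across an ``equivalence,'' but simply observes that on the specific objects $V$ and $\oV$ (with the Yetter--Drinfeld structure over $\D(G)$ coming from the biproduct) the braiding of ${}_{\D(G)}^{\D(G)}\mathcal{YD}$ agrees with the braiding of ${}_{\D(G)}\cM$; since the Nichols algebra depends only on the braided vector space, (iii) follows from (ii). Your fallback argument for (iii) via the grading \eqref{eq:Zgraded} is closer to this, but as written it still leans on ``preserved under the equivalence,'' which is the phrase that needs to go.

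For (iv), your plan of deducing $\BV(\oV)\simeq\BV(V)^{*bop}$ by restricting $\varphi$ from Lemma~\ref{le:iso dual op} to the coinvariants is reasonable in spirit, but it amounts to reproving the general fact that the dual of a bosonization is the bosonization of the braided dual with the opposite/co-opposite decorations, i.e.\ exactly the statement $\BV(\widetilde{V})\simeq\BV(V)^{*bop}$ in $\ydg$ that the paper simply cites from \cite[Theorem~3.2.30]{AG1} before transporting to ${}_{\D(G)}\cM$ via \eqref{eq:ydh a dh} and item (i). If you take that route you must carry out the ``op versus bop'' bookkeeping you flag as the main obstacle; citing the reference is shorter and safer, and the transport step you describe (that the identification $\ydg\simeq{}_{\D(G)}\cM$ is braided and sends $\widetilde V$ to $\oV$ by (i)) is the same as in the paper.
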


\begin{proof}
(i) Let $y\in\oV$, $x\in V$ and $g\in G$. By \eqref{eq:reglas DH boson cruzadas}, we have that
$\langle\ad(g)y, x\rangle=\langle y\_{-1},g^{-1}\rangle\langle y\_{0},x\rangle$. On the other hand, \eqref{eq:oV} and \eqref{eq:reglas DH boson} imply that
$\langle y, \ad(g^{-1})x\rangle=\langle y, g^{-1}\cdot x\rangle=\langle y\_{-1},g^{-1}\rangle\langle y\_{0},x\rangle$. Then $\langle\ad(g)y, x\rangle=\langle y,\ad\cS(g)x\rangle$. In a similar way, we see that $\langle\ad(\delta_g)y, x\rangle=\langle y,\ad\cS(\delta_g)x\rangle$.

(ii) $\BV(V)$ and $\BV(\oV)$ are braided Hopf algebras in ${}_{\D(G)}^{\D(G)}\mathcal{YD}$, because $\D^{\leq0}$ and $\D^{\geq0}$ are Hopf algebras, which satisfy the defining properties of a Nichols algebra

(iii) follows from (ii) because the braiding of ${}_{\D(G)}^{\D(G)}\mathcal{YD}$ on $V$ coincides with that of ${}_{\D(G)}\cM$ and the same holds for $\oV$.

(iv) Let $\widetilde{V}$ be the dual object of $V$ in $\ydg$. By \cite[Theorem 3.2.30]{AG1}, $\BV(\widetilde{V})\simeq\BV(V)^{*bop}$ in $\ydg$.
We said before that the adjoint action of $\D(G)$ on $V$ coincides with the action defined by the functor given the equivalence of categories between $\ydg$ and ${}_{\D(G)}\cM$. Then, by (i), $\oV$ is the image of $\widetilde{V}$ by this functor and (iv) follows because the braidings of these categories are equal via this functor.
\end{proof}

\subsection{Verma modules}\label{subsec:verma}
A classical technique in Representation Theory is to study modules induced by simple modules of a subalgebra. Such is the case of the Verma modules for quantum groups, see for instance \cite[Chapter 7]{dixmier}, \cite[Chapter 5]{Jantzen} and \cite[Chapter 3]{lusztig}. Following this idea, we shall induce from the subalgebra $\D^{\geq0}$. 

Every simple $\D^{\geq0}$-module is isomorphic to a simple $\D(G)$-module where $\BV(\overline{V})$ acts via the counit. This holds because $\BV(\oV)$ is local and hence $\ker(\varepsilon)\#\D(G)$ is the Jacobson radical of $\D^{\geq0}$.

\begin{definition}\label{def:verma}
Let $M(g,\varrho)$ be a simple $\D(G)$-module. The {\it Verma module} $\fM(g,\varrho)$ is the $\D$-module induced by $M(g,\varrho)$ seen as a module over $\D^{\geq0}$. Explicitly,
$$
\fM(g,\varrho)=\D\ot_{\D^{\geq0}}M(g,\varrho).
$$
\end{definition}

\medbreak

We fix a simple $\D(G)$-module $M(g,\varrho)$ and set $\fM=\fM(g,\varrho)$. Immediately from the definition, we get that $\fM$ is free as $\BV(V)$-module of rank $\dim M(g,\varrho)=\#\cO_g\cdot\dim\varrho$. Moreover,
\begin{align}\label{eq:verma as dg module}
\fM=\BV(V)\ot M(g,\varrho)\quad\mbox{in }\,{}_{\D(G)}\cM
\end{align}
since $h(x\ot m)=(hx)\ot m =ad(h\_{1})xh\_{2}\ot m=ad(h\_{1})x\ot h\_{2}m$, for $h\in\D(G)$, $x\in\BV(V)$ and $m\in M(g,\varrho)$,
and the last term is the definition of the action in the tensor product of two $\D(G)$-modules.

Also $\fM$ inherits the $\Z$-grading of $\D$ and its homogeneous spaces are $\D(G)$-submodules. Namely, its homogeneous space of degree $n\leq0$ is
$$\fM^{n}=\BV^{-n}(V)\ot M(g,\varrho).$$
Thus $\fM$ turns out to be an $\Z$-graded $\D$-module since
\begin{align}\label{eq:verma G y N graded}
V\,\fM^{n}=\fM^{n-1}\quad\mbox{and}\quad \oV\,\fM^{n}\subseteq \fM^{n+1}.
\end{align}
In fact, the first equality holds as the action by an element of $V$ is just the multiplication in the Nichols algebra and this is generated by $V$. We proceed by induction to prove the second inclusion. If $n=0$, then $\fM^0=\ku1\ot M(g,\varrho)$ on which $\oV$ acts by zero. If $n<0$, then
$$
\oV\,\fM^{n-1}=\oV V\,\fM^{n}\subseteq V\oV\,\fM^{n}+\D(G)\fM^{n}\subseteq\fM^{n},
$$
where the first inclusion follows from \eqref{eq:reglas DH xy} and the second one by inductive hypothesis.

As $\D^{\leq0}$-module, $\fM$ is generated by any element of degree 0, that is
\begin{align}\label{eq:verma d mas ciclico}
\fM=\D^{\leq0}(1\ot m)\quad\forall m\in M.
\end{align}
In fact, $\D^{\leq0}(1\ot m)=\BV(V)\ot \D(G) m=\BV(V)\ot M(g,\varrho)$ since $M(g,\varrho)$ is $\D(G)$-simple. However we have more generators for a Verma module as $\D^{\leq0}$-module.

\begin{lema}\label{le:si esta 1 mas algo entonces es todo el verma}
If $\fm\in \fM^0$ is non-zero, then $\fM=\D^{\leq0}(\fm+\fn)$
for any $\fn\in\oplus_{n<0}\fM^n$.
\end{lema}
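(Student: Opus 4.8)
The statement asserts that for any nonzero $\fm\in\fM^0$ and any $\fn\in\bigoplus_{n<0}\fM^n$, we have $\fM=\D^{\leq0}(\fm+\fn)$. The plan is to reduce to \eqref{eq:verma d mas ciclico} by showing that $\D^{\leq0}(\fm+\fn)$ already contains a nonzero element of $\fM^0$, and then that it contains all of $\fM^0=\ku1\ot M(g,\varrho)$. First I would write $\fn=\sum_{n<0}\fn_n$ with $\fn_n\in\fM^n$ and let $-N<0$ be the minimal degree with $\fn_N\neq 0$ (if $\fn=0$ there is nothing to do and we invoke \eqref{eq:verma d mas ciclico} directly). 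The key idea is to \emph{kill the lower-degree tail} by acting with $\BV(\oV)$, more precisely with the top-degree component $\BV^{top}(\oV)$, or rather with a suitable product of generators from $\oV$. Since $\fM$ inherits the $\Z$-grading of $\D$ and $\oV\,\fM^k\subseteq\fM^{k+1}$ by \eqref{eq:verma G y N graded}, applying $N$ generators of $\oV$ raises degree by $N$: the term $\fn_N\in\fM^{-N}$ is sent into $\fM^0$, while $\fm\in\fM^0$ is sent into $\fM^N$ with $N>0$, hence to $0$ since $\fM$ is concentrated in nonpositive degrees; and all the other $\fn_n$ with $-N<n<0$ go to $\fM^{n+N}$, which is zero whenever $n+N>0$ and harmless otherwise. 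So the cleanest route is to act with $N$ suitable elements of $\oV$ so as to land precisely in degree $0$.

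The main obstacle — and the real content of the lemma — is to guarantee that acting on $\fn_N\in\fM^{-N}=\BV^N(V)\ot M(g,\varrho)$ by an appropriate element of $\BV^N(\oV)$ produces a \emph{nonzero} element of $\fM^0$. This is a nondegeneracy statement: the pairing-type map $\BV^N(\oV)\ot\bigl(\BV^N(V)\ot M(g,\varrho)\bigr)\to \ku1\ot M(g,\varrho)$ induced by the commutation relation \eqref{eq:reglas DH xy} must be nonzero on any nonzero $\fn_N$. I would establish this using Lemma \ref{le:V oV son duales sobre DG}: $\oV$ is the dual object of $V$ in ${}_{\D(G)}\cM$, and $\BV(\oV)\cong\BV(V)^{*bop}$, so the graded components are dual $\D(G)$-modules and the evaluation $\BV^N(\oV)\ot\BV^N(V)\to\ku$ is a perfect pairing (here I would use that $\BV(V)$ is finite-dimensional and that the relation \eqref{eq:reglas DH xy} is exactly the one making $\D$ the Drinfeld double, so repeated $y$-action on $x$-monomials reproduces, up to lower-order $\D(G)$-terms, this evaluation). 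Concretely, choosing a monomial basis of $\BV^N(V)$ and the dual basis of $\BV^N(\oV)$, the leading term of the iterated action of a dual basis element on $\fn_N$ is the corresponding coefficient of $\fn_N$ times $1\ot(\text{something in }M(g,\varrho))$; picking the dual element matched to a basis vector appearing with nonzero coefficient in $\fn_N$ yields a nonzero output in $\fM^0$.

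Once we know $\D^{\leq0}(\fm+\fn)$ contains some nonzero $\fm'\in\fM^0$, we are essentially done: $\fM^0=\ku1\ot M(g,\varrho)$ is a simple $\D(G)$-module (Definition-level fact: $M(g,\varrho)$ is $\D(G)$-simple), and $\D(G)\subseteq\D^{\leq0}$ acts on $\fM^0$ exactly as on $M(g,\varrho)$ by \eqref{eq:verma as dg module}, so $\D(G)\fm'=\fM^0$; in particular $\fM^0\subseteq\D^{\leq0}(\fm+\fn)$. Finally $\D^{\leq0}\fM^0=\BV(V)\otimes M(g,\varrho)=\fM$ by \eqref{eq:verma d mas ciclico} (applied to any $m$ with $1\ot m=\fm'$, or simply using that $\fM^0$ generates). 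Hence $\fM=\D^{\leq0}(\fm+\fn)$, as claimed. I expect the bulk of the write-up to consist of making the nondegeneracy step precise — tracking that the lower-order $\D(G)$-valued corrections in the iterated \eqref{eq:reglas DH xy} do not destroy the leading coefficient — and the rest to follow formally from simplicity of $M(g,\varrho)$ together with \eqref{eq:verma as dg module} and \eqref{eq:verma d mas ciclico}.
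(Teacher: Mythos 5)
Your argument does not prove the statement as written, because the lemma asserts $\fM=\D^{\leq0}(\fm+\fn)$, where $\D^{\leq0}=\BV(V)\#\D(G)$, whereas your tail-killing step acts by elements of $\oV\subset\D^{\geq0}$, which are \emph{not} in $\D^{\leq0}$. After applying $N$ raising operators from $\oV$ to $\fm+\fn$, the element $\fm'\in\fM^0$ you obtain lies in $\D^{\geq0}(\fm+\fn)$, not in $\D^{\leq0}(\fm+\fn)$; so the sentence ``Once we know $\D^{\leq0}(\fm+\fn)$ contains some nonzero $\fm'\in\fM^0$'' does not follow from what precedes it. At best your scheme yields $\D(\fm+\fn)=\fM$, which is strictly weaker than the lemma. (It also would not be a proof of the weaker version without considerable extra work: the nondegeneracy you invoke --- that for any nonzero $\fn_N\in\BV^N(V)\ot M(g,\varrho)$ some $y\in\BV^N(\oV)$ gives $y\fn_N\neq0$ in $\fM^0$, despite the $\D(G)$-valued corrections in \eqref{eq:reglas DH xy} --- is an assertion comparable in depth to the simplicity of the socle, Theorem~\ref{teo:socle simple}, which is proved \emph{after} this lemma and by a different argument.)

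The paper's proof works entirely inside $\D^{\leq0}$ and needs no duality or pairing. Write $\fn=\fn_1+\fn_2$ with $\fn_1\in\fM^{n_1}$ the nonzero component of highest degree $n_1<0$ and $\fn_2$ supported in degrees $<n_1$. By \eqref{eq:verma d mas ciclico}, $\D^{\leq0}\fm=\fM$, so one can choose a homogeneous $z\in\BV^{-n_1}(V)\#\D(G)\subset\D^{\leq0}$ with $z\fm=\fn_1$. Then
\begin{equation*}
(\fm+\fn)-z(\fm+\fn)=\fm+\bigl(\fn_2-z\fn_1-z\fn_2\bigr)\in\D^{\leq0}(\fm+\fn),
\end{equation*}
and since $z$ has degree $n_1<0$ the new tail $\fn_2-z\fn_1-z\fn_2$ is supported in degrees $<n_1$. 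Finite-dimensionality of $\BV(V)$ makes this induction terminate, so $\fm\in\D^{\leq0}(\fm+\fn)$ and the result follows from \eqref{eq:verma d mas ciclico}. If you want to keep your ``push the tail up'' idea, you would first need to prove the required nondegeneracy of the $\oV$-action on $\BV(V)\ot M(g,\varrho)$, and even then you would only obtain $\D(\fm+\fn)=\fM$; to recover the $\D^{\leq0}$ statement you would have to convert $\oV$-elements into $\D^{\leq0}$-elements, which is precisely what the paper's ``push the tail down with $z\in\D^{\leq0}$'' does cleanly from the start.
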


\begin{proof}
By \eqref{eq:verma d mas ciclico} the lemma follows if $\fn=0$. Otherwise, we write $\fn=\fn_1+\fn_2$ with $0\neq\fn_{1}\in\fM^{n_1}$ and $\fn_2\in\oplus_{n<n_1}\fM^n$. By \eqref{eq:verma d mas ciclico} there is $z\in\D^{\leq0}$ such that $z\fm=\fn_1$, moreover $z\in\BV^{n_1}(V)\#\D(G)$. Then
$$
(\fm+\fn)-z(\fm+\fn)=\fm+(\fn_2-z\fn_1-z\fn_2)\in\D^{\leq0}(\fm+\fn)
$$
and the maximum degree of $(\fn_2-z\fn_1-z\fn_2)$ is smaller than $n_1$. Hence the lemma follows by induction in the maximum degree of $\fn$ since $\BV(V)$ is finite-dimensional and $V\fM^{-n_{top}}=0$ if $n_{top}$ is the maximum degree of $\BV(V)$.
\end{proof}

\smallbreak

Using the above lemma we prove one of the main properties of a Verma module.
\begin{thm}\label{teo:unico submod max}
A Verma module has a unique maximal $\D$-submodule and it is homogeneous.
\end{thm}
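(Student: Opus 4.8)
The plan is to show that the sum of all proper $\D$-submodules of $\fM$ is still proper; since the sum of submodules is a submodule, this sum will then be the unique maximal submodule. The key point is a degree obstruction: by \eqref{eq:verma d mas ciclico} (and more flexibly by Lemma \ref{le:si esta 1 mas algo entonces es todo el verma}), if a submodule contains \emph{any} element with a nonzero component in degree $0$, it must be all of $\fM$. Hence every \emph{proper} submodule $N$ satisfies $N\subseteq\bigoplus_{n<0}\fM^n$. Therefore the sum $\fN$ of all proper submodules also satisfies $\fN\subseteq\bigoplus_{n<0}\fM^n\subsetneq\fM$, so $\fN$ is proper. Being a sum of all proper submodules, $\fN$ is the unique maximal submodule, and it contains every proper submodule.

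Second, I would prove $\fN$ is homogeneous, i.e. $\fN=\bigoplus_n(\fN\cap\fM^n)$. For this I need that the submodule generated by the homogeneous components of an element of $\fN$ is again proper — equivalently, that no homogeneous component of an element of $\fN$ can lie outside $\bigoplus_{n<0}\fM^n$, which is automatic since $\fN\subseteq\bigoplus_{n<0}\fM^n$ already. More precisely: take $\fm\in\fN$ and write $\fm=\sum_n\fm_n$ with $\fm_n\in\fM^n$. Each $\fm_n$ has degree $n<0$, so $\fm_n\notin\fM^0$; by the contrapositive of the ``contains a degree-$0$ element $\Rightarrow$ equals $\fM$'' principle applied carefully, I must argue that the submodule $\D\fm_n$ is proper. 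One clean way: $\D$ is $\Z$-graded and acts on the $\Z$-graded module $\fM$ respecting the grading, so $\D\fm_n$ is a graded submodule whose degree-$0$ part is $\D^{\,|n|}\fm_n\subseteq\fM^0$; if this were nonzero then $\D\fm_n=\fM$ by the generation principle, but $\fm_n\in\fN$ forces $\D\fm_n\subseteq\fN\subsetneq\fM$, contradiction. Hence each $\D\fm_n$ is proper, so $\fm_n\in\fN$, giving $\fN=\bigoplus_n(\fN\cap\fM^n)$.

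The main obstacle is the bookkeeping around the grading: I must make sure the statement ``a submodule meeting $\fM^0$ nontrivially equals $\fM$'' is actually available. It follows from \eqref{eq:verma d mas ciclico} only for a genuine element \emph{of} $\fM^0$; for an arbitrary submodule element whose degree-$0$ component is nonzero, one invokes Lemma \ref{le:si esta 1 mas algo entonces es todo el verma} with $\fm$ the degree-$0$ component and $\fn$ the rest. So the argument really is: (a) any proper submodule sits inside $\bigoplus_{n<0}\fM^n$ by Lemma \ref{le:si esta 1 mas algo entonces es todo el verma}; (b) therefore the sum $\fN$ of all proper submodules is proper and hence the unique maximal one; (c) homogeneity of $\fN$ follows because $\D$-translates of homogeneous components are graded submodules, which are proper exactly because they lie in $\fN$, so the components themselves lie in $\fN$.

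\begin{proof}
Let $\fN$ denote the sum of all proper $\D$-submodules of $\fM$. If $N\subsetneq\fM$ is a proper submodule, then $N\cap\fM^0=0$: indeed, if $\fm+\fn\in N$ with $0\neq\fm\in\fM^0$ and $\fn\in\oplus_{n<0}\fM^n$, then $\fM=\D^{\leq0}(\fm+\fn)\subseteq N$ by Lemma \ref{le:si esta 1 mas algo entonces es todo el verma}, a contradiction; more generally this shows $N\subseteq\oplus_{n<0}\fM^n$. Consequently $\fN\subseteq\oplus_{n<0}\fM^n\subsetneq\fM$, so $\fN$ is itself a proper submodule. By construction it contains every proper submodule, hence it is the unique maximal $\D$-submodule of $\fM$.

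It remains to see that $\fN$ is homogeneous. Let $\fm\in\fN$ and write $\fm=\sum_{n<0}\fm_n$ with $\fm_n\in\fM^n$. Since $\D$ is $\Z$-graded and acts on the $\Z$-graded module $\fM$ preserving degrees, each $\D\fm_n$ is a graded submodule of $\fM$ whose homogeneous component of degree $0$ equals $\D^{\,-n}\fm_n\subseteq\fM^0$. If this component were nonzero, then $\D\fm_n$ would meet $\fM^0$ nontrivially and hence equal $\fM$ by the previous paragraph; but $\fm_n\in\D\fm_n$ would then force $\fm$ to generate, contradicting $\D\fm_n\subseteq\fN\subsetneq\fM$ (note $\D\fm_n\subseteq\fN$ because $\D\fm_n$ is proper once we know it does not meet $\fM^0$). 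Therefore each $\D\fm_n$ is proper, so $\fm_n\in\fN$ for all $n$, and $\fN=\bigoplus_{n}(\fN\cap\fM^n)$.
\end{proof}
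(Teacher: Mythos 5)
Your first paragraph is correct and is essentially the paper's own argument: by Lemma \ref{le:si esta 1 mas algo entonces es todo el verma}, a proper submodule contains no element with nonzero degree-$0$ component, hence lies in $\oplus_{n<0}\fM^n$; therefore the sum of all proper submodules is still proper and is the unique maximal submodule.

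The homogeneity argument, however, has a genuine gap, and it sits exactly where the real work is needed. You fix a component $\fm_n$ of $\fm\in\fN$, assume the degree-$0$ part of $\D\fm_n$ is nonzero, conclude $\D\fm_n=\fM$, and then try to reach a contradiction with ``$\D\fm_n\subseteq\fN$''. But $\D\fm_n\subseteq\fN$ holds only if $\fm_n\in\fN$ (equivalently, only if $\D\fm_n$ is proper), which is precisely what you are trying to prove; your parenthetical justification (``$\D\fm_n$ is proper once we know it does not meet $\fM^0$'') invokes the hypothesis of the opposite case, so the contradiction is circular. Likewise the claim that ``$\fm_n\in\D\fm_n$ forces $\fm$ to generate'' is asserted without proof: nothing in your argument transfers generation from the homogeneous component $\fm_n$ back to the actual element $\fm$ of $\fN$. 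That transfer is the step the paper supplies: from $\D\fm_n=\fM$ one chooses a \emph{homogeneous} $z\in\D$ of degree $-n$ (the paper takes it in $\BV(\oV)\#\D(G)$) with $0\neq z\fm_n\in\fM^0$; for the other components, $z\fm_m$ lies in $\fM^{m-n}$, which vanishes when $m>n$ because the Verma module is concentrated in degrees $\leq 0$, so $z\fm=z\fm_n+(\mbox{terms of negative degree})$ has nonzero degree-$0$ component. Since $z\fm\in\fN$, this contradicts your first paragraph (or, via Lemma \ref{le:si esta 1 mas algo entonces es todo el verma}, it would force $\fN=\fM$). With this insertion your proof closes; as written, it does not.
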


\begin{proof}
If $\fN$ is a strict $\D$-submodule of $\fM$, then there exists a non-zero negative integer $n_\fN$ such that $\fN\subseteq\oplus_{n<n_{\fN}}\fM^n$ by Lemma \ref{le:si esta 1 mas algo entonces es todo el verma}. Hence the sum $\fX$ of all strict $\D$-submodules is the unique maximal $\D$-submodule of $\fM$.

Let $\sum_{n}\fn_n\in\fX$ with $\fn_n\in\fM^n$. If we see that $\fn_n\in\fX$ for all $n$, then $\fX$ is homogeneous.

Otherwise, without loss of generality, we can assume $\fn_n\notin\fX$ with $n$ maximal, then $\D\fn_n=\fM$. Thus there is $z\in\BV^n(\oV)\#\D(G)$ such that $0\neq z\fn_n=1\ot m\in\fM^0$ and hence $z\sum_{n}\fn_n=1\ot m+\widetilde\fn$ with $\widetilde\fn\in\oplus_{n<0}\fM^n$. Then $\D\sum_{n}\fn_n=\fM$ by Lemma \ref{le:si esta 1 mas algo entonces es todo el verma} but this is not possible because $\sum_{n}\fn_n\in\fX\subsetneq\fM$.
\end{proof}

As it is common, we introduce the highest-weight modules in such a way that a Verma module is a highest-weight module. The weights in our case are the simple $\D(G)$-modules which can have dimension greater than one.

\begin{definition}\label{def:highest modules}
Let $N$ be a $\D$-module and $M\subset N$ a simple $\D(G)$-submodule of weight $(g,\varrho)$. Assume that $N$ is generated as $\D$-module by $M$.

We say that $N$ is a {\it highest-weight module of weight $(g,\varrho)$} if $\oV M=0$.

We say that $N$ is a {\it lowest-weight module of weight $(g,\varrho)$} if $V M=0$.
\end{definition}

Hence we have that
\begin{align}\label{eq:highest module}
N=\D M=\BV(V)M=\D^{\leq0}m\quad\forall m\in M
\end{align}
if $N$ is a highest-weight module, and
\begin{align}\label{eq:lowest module}
N=\D M=\BV(\oV)M=\D^{\geq0}m\quad\forall m\in M
\end{align}
in case that $N$ is a lowest-weight module. These follow from the decomposition \eqref{eq:D decomposition} of $\D$ and since $M$ is $\D(G)$-simple.

\smallbreak

We set $\fMsoc=\BV^{n_{top}}(V)\ot M(g,\varrho)$ where $n_{top}$ is the maximum degree of the Nichols algebra. Note that $\fMsoc$ is simple as a $\D(G)$-module since $\BV^{n_{top}}(V)$ is one-dimensional.

\begin{thm}\label{teo:socle simple}
The socle of the Verma module $\fM$ is simple as a $\D$-module and equals $\BV(\oV)\fMsoc$.
\end{thm}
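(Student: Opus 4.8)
The plan is to exploit the $\Z$-grading and the triangular decomposition in the same way as in the proof of Theorem~\ref{teo:unico submod max}, but dualized: instead of "every strict submodule lives in negative degrees" I want "every non-zero submodule meets the bottom degree $-n_{top}$". First I would show that $\BV(\oV)\fMsoc$ is a submodule: by \eqref{eq:verma G y N graded} we have $\oV\,\fM^{n}\subseteq\fM^{n+1}$, and $V\fMsoc=V\,\BV^{n_{top}}(V)\ot M(g,\varrho)=\BV^{n_{top}+1}(V)\ot M(g,\varrho)=0$ because $n_{top}$ is the top degree of $\BV(V)$; so $\fMsoc$ is killed by $V$ and by the triangular decomposition $\D\fMsoc=\BV(\oV)\D(G)\fMsoc=\BV(\oV)\fMsoc$, the last equality since $\fMsoc$ is $\D(G)$-simple. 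In fact this identifies $\BV(\oV)\fMsoc$ as the lowest-weight module of weight $(\hat g,\hat\varrho)$ generated by $\fMsoc$, in the sense of Definition~\ref{def:highest modules}.

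Next I would prove that $\BV(\oV)\fMsoc$ is contained in every non-zero $\D$-submodule $\fN$ of $\fM$. The key claim is: \emph{if $\fN\neq0$ then $\fN\cap\fM^{-n_{top}}\neq0$.} To see this, take $0\neq\fn\in\fN$ and write $\fn=\sum_{n}\fn_n$ with $\fn_n\in\fM^n$; let $m$ be minimal with $\fn_m\neq0$. Since $\fM=\BV(V)\ot M(g,\varrho)$ is free over $\BV(V)$ and $\BV(V)$ is a finite-dimensional local graded algebra whose top component $\BV^{n_{top}}(V)$ is one-dimensional and is the space of integrals, left multiplication $\BV^{-n-n_{top}}(V)\ot\fM^{n}\to\fM^{-n_{top}}$ is surjective for every $n$; more precisely, for $0\neq\fn_m\in\BV^{-m}(V)\ot M(g,\varrho)$ there is $x\in\BV^{-n_{top}+m}(V)$ with $x\fn_m\neq0$, and then $x\fn\in\fN\cap\fM^{-n_{top}}$ is non-zero because $x$ raises every homogeneous component of $\fn$ to degree $\geq -n_{top}$ and kills all of them except possibly the one in degree $m$... here one must be slightly careful, so instead I would argue: among all non-zero elements of $\fN$ pick one, say $\fn$, whose \emph{minimal} degree $m$ is as large as possible; if $m>-n_{top}$, choose $x\in\BV^{?}(V)$ with $x\fn_m\neq0$ of degree strictly below $m$, obtaining a non-zero element of $\fN$ with strictly smaller minimal degree unless $x$ also kills $\fn_m$; iterating and using freeness over $\BV(V)$ and finite-dimensionality forces $m=-n_{top}$, i.e.\ $\fN\cap\fM^{-n_{top}}\neq0$.

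Once $\fN\cap\fM^{-n_{top}}\neq0$, note $\fM^{-n_{top}}=\fMsoc$ is $\D(G)$-simple, so $\D(G)(\fN\cap\fMsoc)=\fMsoc$, hence $\fMsoc\subseteq\fN$ and therefore $\BV(\oV)\fMsoc=\D\fMsoc\subseteq\fN$. Thus $\BV(\oV)\fMsoc$ is a non-zero $\D$-submodule contained in every non-zero $\D$-submodule of $\fM$, which is exactly the statement that it is the socle \emph{and} that the socle is the unique minimal submodule, hence simple: if $\fS'\subseteq\BV(\oV)\fMsoc$ is a non-zero submodule then by the same containment $\BV(\oV)\fMsoc\subseteq\fS'$, so $\fS'=\BV(\oV)\fMsoc$.

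The main obstacle is the freeness/integral argument in the middle paragraph: one needs that for a non-zero homogeneous $\fn_m\in\BV^{-m}(V)\ot M(g,\varrho)$ there exists $x\in\BV(V)$ with $x\fn_m$ non-zero and of degree exactly $-n_{top}$, and crucially that one can choose $x$ so that the resulting element of $\fN$ genuinely has larger minimal degree — i.e.\ controlling what happens to the lower homogeneous components $\fn_n$, $n<m$, which there aren't any of by minimality of $m$, so in fact this is clean: $x\fn=x\fn_m+\sum_{n>m}x\fn_n$ and all terms have degree $\geq m-\text{(top of }x)$; picking $x$ homogeneous of degree $-n_{top}-m$ (so that $x\fn_m\in\fM^{-n_{top}}$) with $x\fn_m\neq0$ — possible because $\BV^{-n_{top}-m}(V)\cdot(-)$ separates $\BV^{-m}(V)$ by the non-degeneracy of the pairing coming from the integral — gives $x\fn\in\fN$ with $x\fn\neq0$ and minimal degree $\geq -n_{top}$, whence $=-n_{top}$. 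This uses only that $\BV(V)$ is a finite-dimensional graded algebra with one-dimensional top which is a two-sided integral, facts recalled in \S\ref{subsub:BV}.
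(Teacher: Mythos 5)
Your overall strategy is the same as the paper's — show that every non-zero $\D$-submodule of $\fM$ must contain $\fMsoc$, then conclude that $\BV(\oV)\fMsoc$ is simple and is the socle — but the middle step has a genuine gap. You want to prove $\fN\cap\fM^{-n_{top}}\neq 0$ for every non-zero $\D$-submodule $\fN$, and to that end you take $0\neq\fn\in\fN$ with minimal-degree component $\fn_m$ and multiply by a suitable $x\in\BV^{n_{top}+m}(V)$ so that $x\fn_m\neq0$ lands in degree $-n_{top}$. What this actually produces is a non-zero element $x\fn\in\fN$ whose $-n_{top}$-\emph{component} is non-zero; the remaining terms $x\fn_n$ with $n>m$ sit in degrees strictly above $-n_{top}$ and are not shown to vanish, so $x\fn$ is in general \emph{not} an element of $\fM^{-n_{top}}$. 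Hence the conclusion "$\fN\cap\fM^{-n_{top}}\neq0$" does not follow. The extremal device ("minimal degree $m$ as large as possible") does not rescue this either: finding an element of $\fN$ with strictly \emph{smaller} minimal degree is not a contradiction to that choice, so the iteration never terminates in a useful contradiction, and after it runs you are still holding only an element with non-zero bottom component, which is strictly weaker than membership in $\fM^{-n_{top}}$.

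The fix uses exactly the ingredients you cite (finite-dimensionality of $\BV(V)$, freeness of $\fM$ over $\BV(V)$, the one-dimensional top being the integral), but applied to the \emph{top} of $\fn$ rather than the bottom. Among the non-zero $\fn\in\fN$, choose one whose maximal degree $M$ (the largest $n$ with $\fn_n\neq0$) is as small as possible. If $M>-n_{top}$, the paper's integral argument applied to the single homogeneous component $\fn_M$ (write $\fn_M=\sum z_i\ot m_i$ with the $m_i$ linearly independent, and use that $\BV^{n_{top}}(V)$ is the space of integrals) produces $x\in V$ with $x\fn_M\neq0$; then $x\fn\in\fN$ is non-zero and has maximal degree $M-1$, contradicting minimality of $M$. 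Therefore $M=-n_{top}$ and $\fn\in\fM^{-n_{top}}=\fMsoc$, giving $\fN\cap\fMsoc\neq0$ as desired. Equivalently, and more structurally: $\BV(V)$ is local, so the non-zero finite-dimensional $\BV(V)$-module $\fN$ has non-zero $\BV(V)$-socle $\{v\in\fN:Vv=0\}$, and freeness of $\fM$ over $\BV(V)$ together with the integral property identifies $\{v\in\fM:Vv=0\}$ with $\fMsoc$. With this correction your proof reproduces the paper's, which works with a homogeneous $\fm$ and iterates downward by multiplying with elements of $V$, leaving the reduction from arbitrary to homogeneous elements implicit.
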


\begin{proof}
The socle is simple if we show that $\fMsoc\subset\D\fm$ for any homogeneous element $\fm\neq0$ of degree $-n$ in $\fM$ with $n<n_{top}$. To show that, we write $\fm=\sum_{i}z_i\ot m_i$ with $z_i\in\BV^n(V)$ and $\{m_i\}\subset M(g,\varrho)$ linearly independent. We pick $z_i$. Since $\BV^{n_{top}}(V)$ is the space of integrals of the Nichols algebra, there is $x_1\in V$ such that $0\neq x_1z_i\in\BV^{n+1}(V)$. Then $0\neq x_1\fm\in\D\fm$ is an homogeneous element of degree $-n-1$. Hence $x_{n_{top}-n}\cdots x_{1}\fm\neq0$ for appropriated  $x_{n_{top}-n},\dots, x_{1}\in V$ and therefore $\fMsoc\subset\D\fm$ because $\fMsoc$ is $\D(G)$-simple.

Finally, the socle is a lowest-weight module because it is generated by $\fMsoc$. Therefore the socle is equal to $\BV(\oV)\fMsoc$ by \eqref{eq:lowest module}.
\end{proof}

As a direct consequence we obtain a criterion for the simplicity of a Verma module. Recall that the Hilbert series of $\BV(V)$ and $\BV(\oV)$ are equal by Lemma \ref{le:iso dual op}.

\begin{Cor}\label{cor:verma simple xtop} Let $\xtop\in\BV^{n_{top}}(V)$, $\ytop\in\BV^{n_{top}}(\oV)$ and $m\in M$. The Verma module $\fM$ is simple as $\D$-module if and only if $\ytop(x_{top}\ot m)\neq0$.
\end{Cor}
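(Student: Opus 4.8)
The plan is to derive this corollary directly from Theorem~\ref{teo:socle simple}. Recall that $\fM$ is always indecomposable with simple socle $\fS=\BV(\oV)\fMsoc$ (by Theorem~\ref{teo:socle simple}) and, by Theorem~\ref{teo:unico submod max}, it has a unique maximal submodule. Hence $\fM$ is simple if and only if it coincides with its own socle, i.e.\ if and only if $\fS=\fM$. So the task reduces to detecting when $\BV(\oV)\fMsoc$ fills up the whole of $\fM$, and the most economical invariant measuring this is whether the top degree of $\fM$ is reached from the bottom degree by the action of $\BV(\oV)$.

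First I would fix a nonzero $\xtop\in\BV^{n_{top}}(V)$ and a nonzero $m\in M=M(g,\varrho)$, so that $\xtop\ot m$ spans, together with the $\D(G)$-translates coming from $M$, the one-dimensional-over-$\BV^{n_{top}}(V)$ bottom piece $\fM^{-n_{top}}=\fMsoc$. Assume $\fM$ is simple. Then $\fS=\fM$, so in particular $\fM^0=\ku1\ot M$ lies in $\BV(\oV)\fMsoc$; since $\BV(\oV)$ raises degree by $1$ and $\fMsoc$ sits in degree $-n_{top}$, the only way to land in degree $0$ is via $\BV^{n_{top}}(\oV)$. Thus $\BV^{n_{top}}(\oV)\fMsoc$ is a nonzero $\D(G)$-submodule of the simple module $\fM^0$, hence all of it; in particular, for suitable $\ytop\in\BV^{n_{top}}(\oV)$ we get $\ytop(\xtop\ot m)\neq0$ (and for \emph{any} nonzero $\ytop$ as well, using that $\BV^{n_{top}}(\oV)$ is one-dimensional and $M$ is $\D(G)$-simple, so a single nonvanishing forces nonvanishing on a generator). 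Conversely, suppose $\ytop(\xtop\ot m)\neq0$ for some choice of $\ytop,m$. Since $\ytop(\xtop\ot m)\in\fM^0$ is nonzero and $\fM^0\cong M$ is $\D(G)$-simple, the submodule $\D\ytop(\xtop\ot m)$ contains $\fM^0$, hence contains $\D\fM^0=\fM$ by \eqref{eq:verma d mas ciclico}; therefore $\xtop\ot m$ generates $\fM$. But $\xtop\ot m\in\fMsoc\subset\fS$, and $\fS$ is a $\D$-submodule, so $\fM=\D(\xtop\ot m)\subseteq\fS\subseteq\fM$, giving $\fS=\fM$, which by simplicity of the socle means $\fM$ is simple.

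One point to nail down carefully is the passage between ``there exists $\ytop$ with $\ytop(\xtop\ot m)\neq0$'' and the specific phrasing in the statement: because $\BV^{n_{top}}(V)$ and $\BV^{n_{top}}(\oV)$ are both one-dimensional (by Lemma~\ref{le:iso dual op} the Hilbert series agree, and the top of a finite-dimensional Nichols algebra is the line of integrals, cf.\ \S\ref{subsub:BV}), the expression $\ytop(\xtop\ot m)$ is, up to scalar, independent of the choices of $\xtop$ and $\ytop$; and as $m$ runs over $M$ these elements span a $\D(G)$-submodule of $\fM^0\cong M$, which is simple, so either they are all zero or the one we can write down is nonzero. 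Hence the condition ``$\ytop(\xtop\ot m)\neq0$'' is unambiguous once read as: for some (equivalently, using $\D(G)$-simplicity, a generic) $m$.

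The main obstacle is essentially bookkeeping rather than a deep argument: one must be sure that the only contribution to $\BV(\oV)\fMsoc$ in degree $0$ comes from the top graded piece $\BV^{n_{top}}(\oV)$ — this uses the $\Z$-grading \eqref{eq:Zgraded} and \eqref{eq:verma G y N graded} — and that $\D(G)$-simplicity of $\fM^0$ lets a single nonzero vector generate the whole degree-zero layer, after which \eqref{eq:verma d mas ciclico} propagates it to all of $\fM$. No fusion-rule computation or explicit commutation relation is needed here; everything follows formally from Theorems~\ref{teo:unico submod max} and \ref{teo:socle simple} together with the one-dimensionality of the tops of $\BV(V)$ and $\BV(\oV)$.
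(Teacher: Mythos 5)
Your proof is correct and uses exactly the same ingredients as the paper's: Theorem~\ref{teo:socle simple}, the $\Z$-grading \eqref{eq:verma G y N graded}, the cyclicity \eqref{eq:verma d mas ciclico}, and the one-dimensionality of the tops of $\BV(V)$ and $\BV(\oV)$. The only cosmetic difference is in the ``only if'' direction, where the paper argues directly on the degree-$n_{top}$ graded piece of $\D$ (concluding $z\in\D(G)\BV^{n_{top}}(\oV)$) while you pass through the socle equality $\fS=\fM$ and the degree-$0$ layer of $\BV(\oV)\fMsoc$; both are the same argument in slightly different dress.
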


\begin{proof}
By \eqref{eq:verma G y N graded}, $\ytop(x_{top}\ot m)\in\fM^0$ and hence it generates $\fM$ by \eqref{eq:verma d mas ciclico}. Then the socle, which is simple by the above theorem, is exactly $\fM$.

Assume now that $\fM$ is a simple $\D$-module. In particular, $\fM$ is generated by $\xtop\ot m$ and thus there is an element $z\in\D$ such that $0\neq z(\xtop\ot m)\in\fM^0$. By \eqref{eq:verma G y N graded}, $z\in\D^{n_{top}}=\sum_{j-i=n_{top}}\BV^i(V)\D(G)\BV^{j}(\oV)$. Since $n_{top}$ is the maximum degree of $\BV(V)$ and $\BV(\oV)$, we have that $z\in\D(G)\BV^{n_{top}}(\oV)$. Finally, we can take $z=\ytop$ because $\BV^{n_{top}}(\oV)$ is one-dimensional.
\end{proof}

Due to the above theorems we can introduce the following $\D$-modules.

\begin{definition}\label{def:max simple socle}
Let $M(g,\varrho)$ be a simple $\D(G)$-module. Then
\begin{itemize}
\item $\fX(g,\varrho)$ denotes the maximal $\D$-submodule of $\fM(g,\varrho)$.
\item $\fL(g,\varrho)$ denotes the head of $\fM(g,\varrho)$.
\item $\fS(g,\varrho)$ denotes the socle of $\fM(g,\varrho)$ as $\D$-module.
\end{itemize}
\end{definition}

The following theorem states that the correspondence $M(g,\varrho)\leftrightsquigarrow\fL(g,\varrho)$, between the sets of simple $\D(G)$-modules and simple $\D$-modules, is bijective.

\begin{thm}\phantomsection\label{teo:bi con L}
\begin{enumerate}\renewcommand{\theenumi}{\roman{enumi}}\renewcommand{\labelenumi}{(\theenumi)}
\item Let $M(g,\varrho)$ be a simple $\D(G)$-module. Then $\fL(g,\varrho)$ is the unique simple highest-weight module of weight $(g,\varrho)$.
\item Every simple $\D$-module is isomorphic to $\fL(g,\varrho)$ for a unique simple $\D(G)$-module $M(g,\varrho)$.
\end{enumerate}
\end{thm}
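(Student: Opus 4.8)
The strategy is to leverage the two structural theorems already proven: Theorem \ref{teo:unico submod max} (every Verma module has a unique maximal submodule, so $\fL(g,\varrho)$ is well-defined and simple) and Theorem \ref{teo:socle simple} (the socle is simple). For (i), I would first observe that $\fL(g,\varrho)$ is a highest-weight module of weight $(g,\varrho)$: indeed $\fM(g,\varrho)^0 = 1\ot M(g,\varrho)$ maps onto a simple $\D(G)$-submodule of $\fL(g,\varrho)$ of weight $(g,\varrho)$ (it cannot lie in the maximal submodule $\fX(g,\varrho)$, which is contained in $\oplus_{n<0}\fM^n$ by the proof of Theorem \ref{teo:unico submod max}), this submodule generates $\fL(g,\varrho)$ by \eqref{eq:verma d mas ciclico}, and $\oV$ kills it since $\oV\,\fM^0 = 0$. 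For uniqueness among simple highest-weight modules: if $N$ is any highest-weight module of weight $(g,\varrho)$, generated by a simple $\D(G)$-submodule $M\simeq M(g,\varrho)$ with $\oV M = 0$, then the universal property of induction from $\D^{\geq0}$ — using that $M$ becomes a $\D^{\geq0}$-module with $\BV(\oV)$ acting by the counit — gives a surjection $\fM(g,\varrho)\twoheadrightarrow N$. If moreover $N$ is simple, this surjection factors through the unique maximal submodule, so $N\simeq\fL(g,\varrho)$.

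For (ii), let $S$ be a simple $\D$-module. Since $\D$ is $\Z$-graded with $\deg\oV = 1$ and $\BV(\oV)$ is finite-dimensional and local, the subspace $\{s\in S : \oV s = 0\}$ is non-zero (take any nonzero $s$, and repeatedly apply elements of $\oV$; since $\BV(\oV)$ is finite-dimensional with one-dimensional top degree, eventually one lands in the kernel — more carefully, one should argue using a filtration or the top-degree component). This subspace is a $\D(G)$-submodule because the action of $\D\ot S\to S$ is a morphism of $\D(G)$-modules and $\oV$ is a $\D(G)$-submodule of $\D$ stable under the adjoint action; hence it contains a simple $\D(G)$-submodule $M$, necessarily of some weight $(g,\varrho)$. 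Then $\D M = S$ by simplicity of $S$, so $S$ is a highest-weight module of weight $(g,\varrho)$, and by part (i) $S\simeq\fL(g,\varrho)$. Uniqueness of $(g,\varrho)$ follows because a highest-weight module $N$ of weight $(g,\varrho)$ has $N^0\simeq M(g,\varrho)$ as a $\D(G)$-module (the degree-zero part of $\BV(V)M$ is just $M$), so the weight is an invariant of $S$.

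**Main obstacle.** The delicate point is showing in (ii) that the "highest-weight vectors" $\{s\in S : \oV s = 0\}$ form a \emph{non-zero} subspace. Over an abelian group this follows from weight-space combinatorics, but here one must use the grading more carefully: pick $0\neq s\in S$, and consider the largest $k$ such that $\BV^k(\oV)\,s\neq 0$ (which exists since $\BV(\oV)$ is finite-dimensional with top degree $n_{top}$); any nonzero element of $\BV^k(\oV)\,s$ is then annihilated by all of $\oV$, because $\oV\,\BV^k(\oV)\subseteq\BV^{k+1}(\oV)$ kills $s$ — wait, this needs $\oV$ acting after $\BV^k(\oV)$, which is multiplication in $\BV(\oV)$, so indeed $\oV\cdot(\BV^k(\oV)s)\subseteq\BV^{k+1}(\oV)s = 0$ by maximality of $k$. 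This gives the nonzero highest-weight vector. The second subtlety is verifying that this subspace is $\D(G)$-stable: this uses precisely that $\oV\subset\D$ is stable under $\ad(\D(G))$ together with the fact that for $h\in\D(G)$, $y\in\oV$, $s\in S$ one has $y(hs) = \sum h\_1 (\ad(\cS(h\_2))y)\, h\_3 \,s$ — so if $\oV$ kills $s$ it kills $hs$ too, after expanding via the coproduct of $\D(G)$. Both points are routine once set up correctly, so I expect the proof to be short.
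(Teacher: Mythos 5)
Your proof is correct and follows essentially the same route as the paper. For (i) both you and the authors observe that $\fL(g,\varrho)$ is generated by (the image of) $\fM^0=M(g,\varrho)$ on which $\oV$ acts by zero, and deduce uniqueness from the universal property of induction plus the uniqueness of the maximal submodule (Theorem \ref{teo:unico submod max}). For (ii) the paper simply invokes the earlier observation that, because $\ker(\e)\#\D(G)$ is the Jacobson radical of the finite-dimensional algebra $\D^{\geq0}$, every $\D$-module has a simple $\D^{\geq0}$-submodule which is automatically of the form $M(g,\varrho)$ with $\BV(\oV)$ acting trivially; you instead construct the subspace $\{s:\oV s=0\}$ by hand, proving it non-zero via the top-degree trick and $\D(G)$-stable via the adjoint stability of $\oV$. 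These are really the same argument unfolded. Two small remarks: (1) your stability argument is cleanest phrased as $yh\in\D(G)\oV$ (which follows from $\D^{\geq0}=\BV(\oV)\#\D(G)$ being a bosonization with the adjoint action), rather than via the explicit coproduct identity you wrote, which as written does not obviously hold; (2) in your top-degree argument it is worth saying explicitly that $k$ being maximal forces $\BV^{k+1}(\oV)s=0$, which is what you actually use. Neither issue affects the correctness of the proof.
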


\begin{proof}
(i) $\fL(g,\varrho)$ is a simple $\D$-module by Theorem \ref{teo:unico submod max}. As $\fM(g,\varrho)$ is generated by $M(g,\varrho)$, $\fL(g,\varrho)$ is so. Moreover, $\oV M(g,\varrho)=0$ then $\fL(g,\varrho)$ is a highest-weight module. The uniqueness follows from the fact that a highest-module $L$ of weight $(g,\varrho)$ is a quotient of $\fM(g,\varrho)$ since $M(g,\varrho)$ turns out to be a $\D^{\geq0}$-submodule of $L$. If also $L$ is $\D$-simple, then $L\simeq\fL(g,\varrho)$ since $\fX(g,\varrho)$ is the unique maximal submodule of $\fM(g,\varrho)$.

(ii) Every $\D$-module $L$ has a simple $\D^{\geq0}$-module, say $M(g,\varrho)$. Then we have a morphism $\fM(g,\varrho)\longrightarrow L$ of $\D$-modules and this map is surjective if $L$ is $\D$-simple. Hence $\fL(g,\varrho)\simeq L$. On the other hand, if there exists another $\fL(g',\varrho')$ isomorphic to $L$, then $M(g,\varrho)\simeq M(g',\varrho')$ because they are highest weights of $L$.
\end{proof}

The correspondence $M(g,\varrho)\leftrightsquigarrow\fS(g,\varrho)$ also is bijective by the next theorem. We set $M(\hat g,\hat\varrho)$ to be the simple $\D(G)$-module isomorphic to $\BV^{n_{top}}(V)\ot M(g,\varrho)$.

\begin{thm}\phantomsection\label{teo:bi con socle}
\begin{enumerate}\renewcommand{\theenumi}{\roman{enumi}}\renewcommand{\labelenumi}{(\theenumi)}
\item Let $M(g,\varrho)$ be a simple $\D(G)$-module. Then $\fS(g,\varrho)$ is the unique simple lowest-weight module of weight $M(\hat g,\hat\varrho)$.
\item Every simple $\D$-module is isomorphic to $\fS(g,\varrho)$ for a unique  simple $\D(G)$-module $M(g,\varrho)$.
\end{enumerate}
\end{thm}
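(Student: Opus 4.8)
The plan is to mirror the proof of Theorem \ref{teo:bi con L}, replacing ``head'' by ``socle'' and ``highest-weight'' by ``lowest-weight'' throughout, and using the already-established description $\fS(g,\varrho)=\BV(\oV)\fMsoc$ from Theorem \ref{teo:socle simple}. The key point is that $\fMsoc=\BV^{n_{top}}(V)\ot M(g,\varrho)$ is, by construction, a simple $\D(G)$-submodule of weight $M(\hat g,\hat\varrho)$, and it is annihilated by $V$. Indeed $V\cdot\fMsoc$ lands in $\BV^{n_{top}+1}(V)\ot M(g,\varrho)=0$ since $n_{top}$ is the top degree of the Nichols algebra. Hence $\fS(g,\varrho)$, being generated by $\fMsoc$ with $V\fMsoc=0$, is a lowest-weight module of weight $M(\hat g,\hat\varrho)$ in the sense of Definition \ref{def:highest modules}, and it is simple by Theorem \ref{teo:socle simple}.

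For part (i), the remaining task is uniqueness. First I would observe that any lowest-weight $\D$-module $N$ of weight $M(\hat g,\hat\varrho)$ receives a surjection from the Verma-type induced module $\D\ot_{\D^{\leq0}}M(\hat g,\hat\varrho)$, because $M(\hat g,\hat\varrho)$ is a $\D^{\leq0}$-submodule of $N$ (here $V$ acts trivially, $\D(G)$ acts as on the simple module). By the mirror of Theorem \ref{teo:unico submod max} — applied to $\D$ with its grading reversed, or equivalently using the ``opposite'' triangular decomposition $\D=\BV(\oV)\ot\D(G)\ot\BV(V)$ — this lower Verma module has a unique maximal submodule, hence a unique simple quotient. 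So any simple lowest-weight module of weight $M(\hat g,\hat\varrho)$ is isomorphic to that unique simple quotient. It then suffices to identify $\fS(g,\varrho)$ with this quotient: $\fS(g,\varrho)=\BV(\oV)\fMsoc$ is a simple lowest-weight module of that weight, so it must coincide with the unique simple quotient of the lower Verma module. Alternatively, and perhaps more cleanly, one can bypass the lower Verma machinery: if $N$ is any simple lowest-weight module of weight $M(\hat g,\hat\varrho)$, choose a simple $\D(G)$-submodule of $N$ annihilated by $V$; such a submodule generates $N$, and by the argument in Theorem \ref{teo:socle simple} one shows $N$ embeds into $\fM(g,\varrho)$ (its image being a lowest-weight submodule generated in top degree), forcing $N\simeq\fS(g,\varrho)$.

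For part (ii), I would argue exactly as in Theorem \ref{teo:bi con L}(ii) with the roles of $\D^{\geq0}$ and $\D^{\leq0}$ swapped. Given a simple $\D$-module $L$, it has a simple $\D^{\leq0}$-submodule, which (since $\BV(V)$ is local with $\ker\varepsilon\#\D(G)$ the Jacobson radical of $\D^{\leq0}$, mirroring the remark before Definition \ref{def:verma}) is a simple $\D(G)$-module, say of weight $M(\hat g,\hat\varrho)$ on which $V$ acts trivially; thus $L$ is a simple lowest-weight module of that weight, and by part (i) $L\simeq\fS(g,\varrho)$. For uniqueness: if $L\simeq\fS(g,\varrho)\simeq\fS(g',\varrho')$ then $M(\hat g,\hat\varrho)\simeq M(\widehat{g'},\widehat{\varrho'})$ as $\D(G)$-modules since both are the lowest-weight space of $L$; as $\BV^{n_{top}}(V)$ is one-dimensional, tensoring with it is an invertible operation on the set of simple $\D(G)$-modules, so $M(g,\varrho)\simeq M(g',\varrho')$.

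The main obstacle I anticipate is making the ``mirror'' of Theorem \ref{teo:unico submod max} / Lemma \ref{le:si esta 1 mas algo entonces es todo el verma} rigorous for the \emph{lower} Verma module without simply reproving everything: one must check that all the structural facts used — the triangular decomposition \eqref{eq:D decomposition}, the $\Z$-grading, finite-dimensionality and locality of the Nichols algebras, and the relation \eqref{eq:reglas DH xy} governing how $V$ and $\oV$ commute — are symmetric under exchanging $V\leftrightarrow\oV$ and reversing the grading. Lemma \ref{le:V oV son duales sobre DG}(iv), identifying $\BV(\oV)$ with $\BV(V)^{*bop}$, is what guarantees this symmetry, so the cleanest write-up invokes that duality to transport Theorem \ref{teo:socle simple} and the results of \S\ref{subsec:verma} to the opposite side, rather than redoing the induction by hand.
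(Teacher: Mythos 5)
Your proof is correct, but it takes a noticeably different route from the paper's, which is a short counting argument: by Theorem \ref{teo:socle simple} each $\fS(g,\varrho)$ is a simple lowest-weight module of weight $M(\hat g,\hat\varrho)$; the assignment $M(g,\varrho)\mapsto\fS(g,\varrho)$ is injective because the lowest-weight spaces $\fMsoc=\BV^{n_{top}}(V)\ot M(g,\varrho)$ of non-isomorphic Verma modules are pairwise non-isomorphic; and since the sets of simple $\D$-modules and simple $\D(G)$-modules already have the same finite cardinality by Theorem \ref{teo:bi con L}, that injection is automatically a bijection, giving (i) and (ii) at once. You instead set up a parallel ``lower Verma module'' $\D\ot_{\D^{\leq0}}M(\hat g,\hat\varrho)$, transport Theorem \ref{teo:unico submod max}, Lemma \ref{le:si esta 1 mas algo entonces es todo el verma}, and the argument of Theorem \ref{teo:bi con L}(ii) to the opposite side, and correctly flag Lemma \ref{le:V oV son duales sobre DG}(iv) as the duality that makes this $V\leftrightarrow\oV$ mirroring rigorous. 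Both routes ultimately rest on the (implicit in the paper, explicit in your closing lines of (ii)) fact that the lowest-weight space is canonical, i.e.\ $\fMsoc=\{l\in\fS(g,\varrho):Vl=0\}$, which follows from the grading together with the integral argument inside Theorem \ref{teo:socle simple}. Your approach re-derives the bijection directly on the socle side, making it logically independent of the counting step; the cost is that one must actually state and justify the mirrored Verma machinery, which you rightly identify as the obstacle. The paper's approach is shorter precisely because, with Theorem \ref{teo:bi con L} in hand, the socle bijection comes for free from injectivity plus finiteness.
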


\begin{proof}
The socles of the Verma modules are lowest-weight modules by Theorem \ref{teo:socle simple}. Also, the socles of non-isomorphic Verma modules are non-isomorphic because their lowest-weight components are not. Hence the uniqueness in (i) and (ii) follow from the fact that the sets of simple $\D$-modules and simple $\D(G)$-modules are in bijective correspondence by Theorem \ref{teo:bi con L}.
\end{proof}

We denote by $M(g^*,\varrho^*)$ the dual $\D(G)$-module of $M(g,\varrho)$.
\begin{thm}\label{teo:L dual S}
The dual $\D$-module $\bigl(\fS(g,\varrho)\bigr)^*$ is a highest-weight module of weight $M(\hat g^*,\hat\varrho^*)$. Therefore $\bigl(\fS(g,\varrho)\bigr)^*\simeq\fL(\hat g^*,\hat\varrho^*)$ as $\D$-modules.
\end{thm}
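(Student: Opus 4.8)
The plan is to pass from the socle back to a Verma module by dualizing, and then identify the dual as a highest-weight module via Theorem~\ref{teo:bi con L}. Concretely, recall from Theorem~\ref{teo:socle simple} that $\fS(g,\varrho)$ is a lowest-weight module whose lowest-weight component is $\fMsoc=\BV^{n_{top}}(V)\ot M(g,\varrho)$, which by definition is isomorphic to $M(\hat g,\hat\varrho)$ as a $\D(G)$-module. Since $\fS(g,\varrho)$ is finite-dimensional, the dual $\bigl(\fS(g,\varrho)\bigr)^*$ is again a $\D$-module, and I would first record that $V$ and $\oV$ swap roles under dualization: the $\Z$-grading of $\D$ from \eqref{eq:Zgraded} induces a $\Z$-grading on $\fS(g,\varrho)$ (as a graded submodule of $\fM(g,\varrho)$), and dualizing reverses it, so that the top graded piece of $\bigl(\fS(g,\varrho)\bigr)^*$ is $\bigl(\fMsoc\bigr)^*\simeq M(\hat g^*,\hat\varrho^*)$ as a $\D(G)$-module.

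The key step is then to check that this top piece $\bigl(\fMsoc\bigr)^*$ inside $\bigl(\fS(g,\varrho)\bigr)^*$ is annihilated by $\oV$ and generates $\bigl(\fS(g,\varrho)\bigr)^*$ as a $\D$-module, i.e.\ that $\bigl(\fS(g,\varrho)\bigr)^*$ is a highest-weight module of weight $M(\hat g^*,\hat\varrho^*)$. Both facts are dual to properties of $\fS(g,\varrho)$. For the annihilation: in $\fS(g,\varrho)$ the component $\fMsoc$ is the \emph{lowest} graded piece, so $V\cdot\fMsoc=0$ (there is nothing in lower degree); under the standard rule $\langle a\cdot\xi,\,s\rangle = \langle \xi,\, \cS(a)\cdot s\rangle$ for the dual action, and since $\cS$ sends $\oV$ into $\D^{\leq0}\,\D(G)$-combinations involving $V$ (more precisely $\cS(\oV)\subseteq V\D(G)+\D(G)$, up to the explicit antipode formula), the condition $V\cdot\fMsoc=0$ translates into $\oV\cdot\bigl(\fMsoc\bigr)^*=0$. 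For the generation: $\fS(g,\varrho)$ is a lowest-weight module, hence by \eqref{eq:lowest module} it equals $\BV(\oV)\fMsoc$; dualizing, $\bigl(\fS(g,\varrho)\bigr)^*$ is generated by the annihilator-free dual piece under the action of $\cS(\BV(\oV))\subseteq\BV(V)\D(G)$, so $\bigl(\fS(g,\varrho)\bigr)^*=\BV(V)\bigl(\fMsoc\bigr)^*$, which is exactly condition \eqref{eq:highest module} for a highest-weight module of weight $M(\hat g^*,\hat\varrho^*)$. Finally, since $\fS(g,\varrho)$ is simple, so is its dual, and a simple highest-weight module of weight $M(\hat g^*,\hat\varrho^*)$ must be $\fL(\hat g^*,\hat\varrho^*)$ by Theorem~\ref{teo:bi con L}(i).

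The main obstacle is bookkeeping the antipode of $\D$ carefully enough to justify the two translations ``$V\cdot\fMsoc=0 \Rightarrow \oV\cdot(\fMsoc)^*=0$'' and ``$\fS=\BV(\oV)\fMsoc \Rightarrow \fS^* = \BV(V)(\fMsoc)^*$''. One must verify that $\cS$ interchanges the non-negative and non-positive parts of the triangular decomposition \eqref{eq:D decomposition} appropriately, i.e.\ that $\cS(\D^{\geq 0})\subseteq \D^{\leq 0}$ and symmetrically, so that the dual of a lowest-weight module is a highest-weight module with the expected weight; this uses the explicit antipode formulas \eqref{eq:DH} for $\D(H)$ together with the identification of $\BV(\oV)$ with $\BV(V)^{*bop}$ from Lemma~\ref{le:V oV son duales sobre DG}(iv). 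An alternative, perhaps cleaner route that avoids the antipode computation is to argue purely at the level of $\D(G)$-modules and graded pieces: observe that the functor $N\mapsto N^*$ is an anti-equivalence on finite-dimensional $\D$-modules sending highest-weight modules to lowest-weight modules (and vice versa) and reversing the $\Z$-grading, with the top/bottom $\D(G)$-component going to its $\D(G)$-dual; then Theorem~\ref{teo:socle simple} gives that $(\fS(g,\varrho))^*$ is highest-weight of weight $(\widehat g^{\,*},\widehat\varrho^{\,*})$, and simplicity plus Theorem~\ref{teo:bi con L}(i) finishes the identification with $\fL(\widehat g^{\,*},\widehat\varrho^{\,*})$.
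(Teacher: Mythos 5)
Your overall strategy is the same as the paper's (dualize, identify the bottom component $\fS_{-n_{top}}^*\simeq M(\hat g^*,\hat\varrho^*)$, show it is killed by $\oV$ and generates, then invoke Theorem \ref{teo:bi con L}), but the mechanism you propose for the key step is wrong. The inclusions you rely on, $\cS(\oV)\subseteq V\D(G)+\D(G)$ and more generally $\cS(\D^{\geq0})\subseteq\D^{\leq0}$, are false: $\BV(\oV)\#\ku^G=(\BV(V)\#\ku G)^{*op}$ and $\ku G$ are Hopf subalgebras of $\D$, so the subalgebra $\D^{\geq0}=\BV(\oV)\#\D(G)$ they generate is itself a Hopf subalgebra and hence stable under $\cS$. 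Equivalently, $\D$ is a graded Hopf algebra for the $\Z$-grading \eqref{eq:Zgraded}, so the antipode preserves degree and $\cS(\oV)$ still has degree $+1$; it does not land in the non-positive part. Consequently the "verification" you defer to the end would fail, and with it your derivations of $\oV\cdot\bigl(\fMsoc\bigr)^*=0$ and of $\bigl(\fS(g,\varrho)\bigr)^*=\BV(V)\bigl(\fMsoc\bigr)^*$ as you state them: the dual of a lowest-weight module is highest-weight not because $\cS$ interchanges $V$ and $\oV$, but because dualizing reverses the grading.

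The correct argument (and the paper's) uses only the pairing and the grading: for $\xi\in\fS_{-n_{top}}^*$ one has $\langle\oV\,\xi,\fS_i\rangle=\langle\xi,\cS(\oV)\fS_i\rangle$, and $\cS(\oV)\fS_i\subseteq\fS_{i+1}$ because $\cS(\oV)$ has degree $+1$ and $\fS(g,\varrho)$ is a graded submodule; since $i+1>-n_{top}$ for all $i\geq-n_{top}$, this vanishes, so $\oV\fS_{-n_{top}}^*=0$. Generation requires no antipode bookkeeping at all: $\bigl(\fS(g,\varrho)\bigr)^*$ is simple (dual of a simple module over a finite-dimensional algebra), hence generated by the nonzero $\D(G)$-submodule $\fS_{-n_{top}}^*$. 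Your "alternative route" at the end is indeed the paper's proof in disguise, but as written it is an unproven observation ("the duality functor swaps highest and lowest weight and reverses the grading") whose justification is exactly the degree computation above; until that is supplied, the proposal has a genuine gap at its central step.
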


\begin{proof}
Let $\fS_i$, $i\geq -n_{top}$, be the homogeneous component of degree $i$ of $\fS(g,\varrho)$. Then $\bigl(\fS(g,\varrho)\bigr)^*=\oplus_i \fS_i^*$ and $\fS_{-n_{top}}^*\simeq M(\hat g^*,\hat\varrho^*)$ as $\D(G)$-modules. Since $\bigl(\fS(g,\varrho)\bigr)^*$ is simple, it is generated by $\fS_{-n_{top}}^*$. Moreover, $\oV\fS_{-n_{top}}^*=0$. In fact, $\langle\oV\fS_{-n_{top}}^*,\fS_i\rangle=\langle\fS_{-n_{top}}^*,\cS(\oV)\fS_i\rangle=0$ because
$\cS(\oV)\fS_i\subseteq\fS_{i+1}$ and $-n_{top}<i+1$ for all $i\geq -n_{top}$. Hence the theorem follow from Theorem \ref{teo:bi con L}.
\end{proof}

\begin{Rems}
\begin{enumerate}\renewcommand{\theenumi}{\roman{enumi}}\renewcommand{\labelenumi}{(\theenumi)}
\item Even though the maximal submodule of $\fM$ is homogeneous, a submodule is not necessarily homogeneous, cf. Lemma \ref{le:T}.

\item The maximal submodule is not necessarily generated by highest-weight submodules, cf. Theorem \ref{teo:el latice de sigma menos}.

\item The head and the socle of a Verma module are not necessarily isomorphic, cf. Corollary \ref{cor: l tau cero s e rho}.

\item There are examples with $M(\hat g,\hat\varrho)\not\simeq M(g,\varrho)$. For instance, let $R$ be the Nichols algebra considered in \cite{PV}. Then $R^{n_{top}}$ is not necessarily trivial as Yetter-Drinfeld module.
\end{enumerate}
\end{Rems}

\subsection{Highest and lowest weight modules}\label{subsec:algorithm}
We fix a $\D$-module $N$ and a $\D(G)$-submodule $M\subset N$. We will explain how we can compute the $\D$-submodule generated by $M$ under the hypothesis that it is either a lowest-weight or highest-weight module.

We denote by $\mu$, and call it {\it action map}, the restriction to $V\ot M$ of the action of $\D$ over $N$. By abuse of notation, we also denote by $\mu$ the restriction to $\overline{V}\ot M$. The key of our idea is the simple observation that
\begin{align}\label{eq:the action is in ydg}
\mbox{the action map $\mu$ is a morphism in the category }\,{}_{\D(G)}\cM.
\end{align}
Indeed, we want to see that $\mu(h(z\ot m))=h\mu(z\ot m)$ for any $z\in V\cup\overline{V}$, $h\in\D(G)$ and $m\in M$. The action on the tensor product is $h(z\ot m)=h\_{1}\cdot z\ot h\_{2}m$. Then we apply the action map and obtain
$(h\_{1}\cdot z)h\_{2}m=\ad(h\_{1})zh\_{2}m=h\_{1}zS(h\_{2})h\_{3}m=h(zm)=h\mu(z\ot m)$.

\smallbreak

By  \eqref{eq:highest module} and \eqref{eq:lowest module}, the $\D$-submodule generated by $M$ is either $\BV(\oV)M$ or $\BV(V)M$. Hence we can compute $\D M$ following the algorithm described in the next remark.

\begin{Rem}\label{obs:description submodule gen by a submodule 2}
Keep the above hypothesis and notation.
\begin{enumerate}
\item[(I)] Decompose the tensor product $\oV\ot M$, or $V\ot M$ depending on the case, into the direct sum $\oplus S_\ell$ of simple $\D(G)$-modules.
\end{enumerate}
This is possible because $\D(G)$ is semisimple. Moreover, its simple modules are well-know, recall $\S$\ref{subsec:DG}.
\begin{enumerate}
\item[(II)] Apply the action map to each simple $\D(G)$-module $S_\ell$.
\end{enumerate}
The restriction of the action map to $S_\ell$ is either zero or an isomorphism by Schur's Lemma. Therefore the image of the action map is isomorphic as $\D(G)$-module to the direct sum of the simple modules $S_\ell$ that are not annulled. Note that $\mu(S_\ell)=0$ if and only if $\mu(w)=0$ for some $w\in S_\ell$.
\begin{enumerate}
\item[(III)] Repeat the process with the $\D(G)$-submodule $\mu(\oV\ot M)$, or $\mu(V\ot M)$ depending on the case, instead of $M$.
\end{enumerate}
We have to repeat the process as many times as the maximum degree of $\BV(\oV)$ or $\BV(V)$.
\begin{enumerate}
\item[(IV)] The $\D$-submodule generated by $M$ is the sum of all $\D(G)$-submodules obtained in the step (II).
\end{enumerate}
\end{Rem}

\section{The quantum group at the symmetric group \texorpdfstring{$\Sn_3$}{S3} attached to the 12-dimensional Fomin-Kirillov algebra}\label{sec:S3}

Throughout this section $V=\ku\{\xij{12},\xij{23},\xij{13}\}$ is the Yetter-Drinfeld module over $\ku\Sn_3$ given by
$$
g\cdot\xij{ij}=\sgn(g)\,x_{g(ij)g^{-1}}\quad\mbox{and}\quad(\xij{ij})\_{-1}\ot(\xij{ij})\_{0}=(ij)\ot\xij{ij}
$$
for any transposition $(ij)$ and $g\in\Sn_3$. Let $\oV\in\ydstresd$ be the Yetter-Drinfeld module attached to $V$ from Definition \ref{def:oV}. From \cite{MS} we know that $\BV(V)$ is isomorphic to the $12$-dimensional Fomin-Kirillov algebra introduced in \cite{FK}.

We denote by $\D$ the Drinfeld double of the bosonization $\BV(V)\#\ku\Sn_3$. The aim of this section is to apply the results of the previous section in the specific example of this quantum group.

\bigbreak

We have to consider $V$ and $\oV$ as $\DSn$-modules with the adjoint action in $\D$. Using Remark \ref{obs:V as DGmod}, we see that $V\simeq M(\sigma,-)$ via the assignment
$$
V\longrightarrow M(\sigma,-),\quad\xij{ij}\longmapsto\mm{ij}
$$
for every transposition $(ij)$.

By Lemma \ref{le:V oV son duales sobre DG}, $\oV$ is the dual object of $V$ in the category ${}_{\DSn}\cM$. We denote by $\{\yij{12},\yij{23},\yij{13}\}$ the basis of $\oV$ dual to $\{\xij{12},\xij{23},\xij{13}\}$, that is $\langle\yij{ij},\xij{lk}\rangle=\delta_{(ij),(lk)}$.  Then it is not difficult to check that $\oV\simeq M(\sigma,-)$ via the assignment
$$
\oV\longrightarrow M(\sigma,-),\quad\yij{ij}\longmapsto\mm{ij}
$$
for every transposition $(ij)$.

\medbreak

The defining relations of the Nichols algebra $\BV(V)$ are
\begin{align}\label{eq:rel of BV}
\notag \xij{12}^2,&\quad \xij{13}^2,\quad \xij{23}^2,\\
\xij{12}\xij{13}+&\xij{23}\xij{12}+\xij{13}\xij{23}\quad\mbox{and}\\
\notag \xij{13}\xij{12}+&\xij{12}\xij{23}+\xij{23}\xij{13}.
\end{align}
We denote by $\B$ the basis of $\BV(V)$ which is obtained by choosing one element per row of the next list and multiply them from top to bottom, see {\it e. ~g.} \cite{grania}:
\begin{align*}
1, &\, \xij{12}, \\
1, &\, \xij{13},\, \xij{13}\xij{12},\\
1, &\, \xij{23}.
\end{align*}
We set $\B^n=\B\cap\BV^n(V)$, $n\geq0$. The element of maximum degree in $\B$ is
$$\xtop=\xij{12}\xij{13}\xij{12}\xij{23}\in\B^4.$$

\begin{lema}\label{descomposicion de BV as DS3mod}
We have the following isomorphisms of $\DSn$-modules.
\begin{enumerate}\renewcommand{\theenumi}{\roman{enumi}}\renewcommand{\labelenumi}{(\theenumi)}
\item\label{item: 0 4} $\BV^0(V)\simeq\BV^4(V)\simeq M(e,+)$,
\item\label{item: 1 3} $\BV^1(V)\simeq\BV^3(V)\simeq M(\sigma,-)$; the last isomorphism is given by the assignment
$$
\mm{12}\longmapsto\xij{13}\xij{12}\xij{23},\quad \mm{23}\longmapsto-\xij{12}\xij{13}\xij{12},\quad \mm{13}\longmapsto\xij{12}\xij{13}\xij{23}.
$$
\item\label{item:descomposion de BV tau}$\BV^2(V)\simeq M(\tau,1)\oplus M(\tau,2)$, the isomorphism is given by
\begin{align*}
\mm{\tau}_\ell&\longmapsto(\zeta^{\ell}-1) x_{(12)}x_{(23)}+(\zeta^{-\ell}-1) x_{(13)}x_{(12)},\\
\mm{\tau^{-1}}_\ell&\longmapsto(\zeta^{\ell}-\zeta^{-\ell})x_{(12)}x_{(13)}+(1-\zeta^{-\ell}) x_{(13)}x_{(23)}\quad\mbox{for $\ell=1,2$}.
\end{align*}
\end{enumerate}
\end{lema}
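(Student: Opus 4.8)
\textbf{Proof strategy for Lemma \ref{descomposicion de BV as DS3mod}.}
The plan is to work directly with the explicit $12$-dimensional Fomin-Kirillov algebra $\BV(V)$, using the PBW-type basis $\B$ listed above together with the relations \eqref{eq:rel of BV}, and to identify each homogeneous component $\BV^n(V)$ as a $\DSn$-module by exhibiting explicit $\DSn$-linear isomorphisms with the simple modules described in \S\ref{subsub:s3 sigma simples}--\S\ref{subsub:s3 e simples}. Recall that $\BV(V)\in{}^{\DSn}_{\DSn}\mathcal{YD}$, so each $\BV^n(V)$ is a $\DSn$-module where $\delta_g$ picks out the $\Sn_3$-grading (a product $x_{(i_1j_1)}\cdots x_{(i_nj_n)}$ has degree $(i_1j_1)\cdots(i_nj_n)$) and where $g\in\Sn_3$ acts via the adjoint action, which on a monomial of length $n$ reads $g\cdot(x_{(i_1j_1)}\cdots x_{(i_nj_n)})=\sgn(g)^n\, x_{g(i_1j_1)g^{-1}}\cdots x_{g(i_nj_n)g^{-1}}$ since the braiding contributes the sign on each factor.

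First I would record the dimensions: from the basis $\B$, $\dim\BV^0=\dim\BV^4=1$, $\dim\BV^1=\dim\BV^3=3$, $\dim\BV^2=4$, totalling $12$. For \eqref{item: 0 4}, $\BV^0(V)=\ku1$ is concentrated in degree $e$ with trivial $\Sn_3$-action, hence $\simeq M(e,+)$; and $\BV^4(V)=\ku\xtop$ is the space of integrals, one-dimensional, concentrated in degree $(12)(13)(12)(23)=e$, and since $g\cdot\xtop=\sgn(g)^4 x_{g(12)g^{-1}}\cdots=\xtop$ up to reordering, it is the trivial module — here one must check the reordering via the relations produces no sign, which is automatic because $\BV^{top}(V)$ being the integral is a one-dimensional $\DSn$-submodule, so it is $M(e,\chi)$ for some character $\chi$; evaluating the $\sigma$-action on $\xtop$ via \eqref{eq:rel of BV} pins down $\chi=\e$. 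For \eqref{item: 1 3}, $\BV^1(V)=V\simeq M(\sigma,-)$ was already established in the text; for $\BV^3(V)$ I would verify that the three proposed images $x_{(13)}x_{(12)}x_{(23)}$, $-x_{(12)}x_{(13)}x_{(12)}$, $x_{(12)}x_{(13)}x_{(23)}$ are nonzero in $\BV(V)$ (they lie in $\ku\B^3$ after rewriting), have $\Sn_3$-degrees $(13)(12)(23)=(12)$, $(12)(13)(12)=(23)$, $(12)(13)(23)=(13)$ respectively, and then check the two defining relations of $M(\sigma,-)$: $\tau\cdot\mm{\sigma\tau^t}=\mm{\sigma\tau^{t+1}}$ and $\sigma\cdot\mm{\sigma\tau^t}=-\mm{\sigma\tau^{-t}}$, using the adjoint action formula and the relations \eqref{eq:rel of BV} to bring the resulting monomials back into the basis; by Schur's lemma and the dimension count, a single nonzero $\DSn$-map $\BV^3(V)\to M(\sigma,-)$ that is checked on the three basis vectors suffices.

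For \eqref{item:descomposion de BV tau}, $\BV^2(V)$ is $4$-dimensional with basis $\{x_{(12)}x_{(13)},x_{(12)}x_{(23)},x_{(13)}x_{(12)},x_{(13)}x_{(23)}\}$ (any degree-$2$ monomial reduces to these via $\eqref{eq:rel of BV}$; note the squares vanish and the two quadratic relations let one eliminate $x_{(23)}x_{(12)}$, $x_{(23)}x_{(13)}$, $x_{(13)}x_{(12)}$... more carefully, one uses the chosen basis $\B$ which gives $\{x_{(13)}x_{(12)}\}$ plus products, so I would just fix the four-element spanning set consistent with $\B$). The $\Sn_3$-degrees are $(12)(13)=(132)=\tau^{-1}$, $(12)(23)=(123)=\tau$, $(13)(12)=(123)=\tau$, $(13)(23)=(132)=\tau^{-1}$, so $\BV^2(V)$ is supported on $\{\tau,\tau^{-1}\}$, matching $M(\tau,1)\oplus M(\tau,2)$. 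I would then verify that the four proposed combinations $\mm{\tau}_\ell\mapsto(\zeta^\ell-1)x_{(12)}x_{(23)}+(\zeta^{-\ell}-1)x_{(13)}x_{(12)}$ and $\mm{\tau^{-1}}_\ell\mapsto(\zeta^\ell-\zeta^{-\ell})x_{(12)}x_{(13)}+(1-\zeta^{-\ell})x_{(13)}x_{(23)}$, for $\ell=1,2$, are linearly independent (a Vandermonde-type determinant in $\zeta$) hence a basis of $\BV^2(V)$, and then check the $\DSn$-action relations of $M(\tau,\ell)$ from \S\ref{subsub:s3 tau simples}: $\tau^t\cdot\mm{\tau^{\pm1}}_\ell=\zeta^{\pm t\ell}\mm{\tau^{\pm1}}_\ell$ and $\sigma\tau^t\cdot\mm{\tau^{\pm1}}_\ell=\zeta^{\pm t\ell}\mm{\tau^{\mp1}}_\ell$. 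The main obstacle — really the only non-routine part — is organizing the rewriting of length-$2$ and length-$3$ monomials in the non-commutative algebra $\BV(V)$ into the fixed basis $\B$ so that the adjoint $\Sn_3$-action can be read off; once the reduction rules coming from \eqref{eq:rel of BV} are tabulated, every verification is a finite linear-algebra check, and Schur's lemma plus the dimension bookkeeping guarantee that checking the action on the displayed basis vectors is enough to conclude the isomorphisms.
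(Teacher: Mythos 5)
Your proposal is correct and its overall strategy coincides with the paper's: identify each $\BV^n(V)$ as a $\DSn$-module by exhibiting explicit assignments, reading off the $\ku^{\Sn_3}$-grading from products of transpositions and computing the adjoint $\ku\Sn_3$-action on monomials via $g\cdot(x_{(i_1 j_1)}\cdots x_{(i_n j_n)})=\sgn(g)^n\,x_{g(i_1 j_1)g^{-1}}\cdots x_{g(i_n j_n)g^{-1}}$, then rewriting into the basis $\B^n$ with the relations \eqref{eq:rel of BV}. Parts (i) and (ii) are handled in the paper exactly as you outline (in fact the paper only records the $\sigma$-action, leaving the degree check and the $\tau$-action implicit). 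For (iii), however, the paper takes a shortcut you do not exploit: since the multiplication $V\ot V\to\BV^2(V)$ is a $\DSn$-module map, one pushes the explicit copy of $M(\tau,\ell)\hookrightarrow M(\sigma,-)\ot M(\sigma,-)$ from \eqref{eq:tau en sigma sigma} through it; applying the quadratic relation $x_{(13)}x_{(12)}+x_{(12)}x_{(23)}+x_{(23)}x_{(13)}=0$ to $\zeta^\ell x_{(12)}x_{(23)}+\zeta^{-\ell}x_{(13)}x_{(12)}+x_{(23)}x_{(13)}$ produces the stated coefficients $(\zeta^\ell-1)$ and $(\zeta^{-\ell}-1)$ in one step, and nonvanishing of the image plus Schur gives the isomorphism. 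That is shorter than your plan of establishing linear independence of the four vectors and then re-verifying the $\tau^t$- and $\sigma\tau^t$-actions against the defining formulas of $M(\tau,\ell)$, but both routes are valid. One small phrasing caveat: you attribute the factor $\sgn(g)^n$ to ``the braiding contributing the sign on each factor''; the sign actually comes from the $\sgn$-twist built into the $\Sn_3$-action on $V$ itself, while the braiding/bosonization is only responsible for the $\ku\Sn_3$-action extending multiplicatively to $\BV(V)$ --- worth keeping these two roles separate so signs are not double-counted.
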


\begin{proof} (i) For $\BV^0(V)$ the isomorphism is clear. For $\BV^4(V)$ it is enough to see that
\begin{align*}
\sigma\cdot \xtop&=\sgn^4(\sigma)x_{(12)(12)(12)}x_{(12)(13)(12)}x_{(12)(12)(12)}x_{(12)(23)(12)}=x_{(12)}x_{(23)}x_{(12)}x_{(13)}\\
&=-\xij{12}\xij{13}\xij{23}\xij{13}=\xij{12}\xij{13}\xij{12}\xij{23}=\xtop\quad\mbox{by \eqref{eq:rel of BV}.}
\end{align*}

To prove (ii) we note that $\sigma\cdot x_{(13)}x_{(12)}x_{(23)}=-x_{(13)}x_{(12)}x_{(23)}$ and
\begin{align*}
\sigma\cdot x_{(12)}x_{(13)}x_{(23)}=\sgn(\sigma)^3x_{(12)(12)(12)}x_{(12)(13)(12)}x_{(12)(23)(12)}=-x_{(12)}x_{(23)}x_{(13)}=x_{(12)}x_{(13)}x_{(12)}.
\end{align*}

(iii) follows from \eqref{eq:tau en sigma sigma} and using \eqref{eq:rel of BV}.
\end{proof}

\subsection{Description of the action on a Verma module}
We fix a simple $\D$-module $M$ and take the basis $\bB_M$ of $M$ which consists of elements of the form $\mm{g}$, $g\in\Sn_3$, recall $\S$\ref{subsub:s3 sigma simples}, $\S$\ref{subsub:s3 tau simples} and $\S$\ref{subsub:s3 e simples}.

Let $\fM$ be the Verma module of $M$. Since $\D\simeq\BV(V)\ot\DSn\ot\BV(\oV)$, a basis of $\fM$ is the set of elements
$$
x\mm{g}=x\ot\mm{g}\quad\forall x\in\B,\,\mm{g}\in\bB_M.
$$
Then the action of $\BV(V)$ on $\fM$ is given just by the multiplication:
$$
z\cdot x\mm{g}=(zx)\mm{g}\quad\forall z\in\BV(V).
$$
The action of $\DSn$ is the diagonal action by \eqref{eq:verma as dg module}:
$$
h\cdot x\mm{g}=\ad h\_{1}(x)\ot h\_{2}\cdot\mm{g}\quad\forall h\in\DSn.
$$

Computing the action of $\BV(\oV)$ is more laborious. We have to use the commutation rules between the generators of $\BV(\oV)$ and $\BV(V)$ given by \eqref{eq:reglas DH xy}. In our case \eqref{eq:reglas DH xy} is rewritten as follow
\begin{align}\label{eq:de los ij iguales}
\yij{ij}\xij{ij}&=1+(ij)(\delta_{(ij)}-\delta_e)-\xij{ij}\yij{ij}\quad\mbox{and}\\
\label{eq:de los ij no iguales}
\yij{ik}\xij{ij}&=(ij)(\delta_{(ik)}-\delta_{(ik)(ij)})-\xij{ij}\yij{jk}
\end{align}
for all distinct transpositions $(ij)$ and $(ik)$. However, if we know the action of $\yij{12}$ on $\fM$, then we can deduce the action of the remainder generators of $\BV(\oV)$. In fact, let $(ij)\neq(12)$ and $t\in\Sn_3$ such that $t(ij)t^{-1}=(12)$. Hence
\begin{align}\label{eq:con uno es suficiente}
\yij{ij}\cdot x\mm{g}=\sgn(t)t\,\yij{12}\,t^{-1}\cdot x\mm{g}.
\end{align}
In the Appendix we give explicitly the action of $\yij{12}$ on each element $x\mm{g}$ in the basis of $\fM$. We leave this for the Appendix because it is a very long list and we don't want to bore the reader now. We shall use these computations in the next subsections without previous mention.

\subsection{The simple Verma modules}

\begin{thm}\label{teo:los simples}
The Verma modules $\fM{(e,-)}$, $\fM{(\sigma,+)}$, $\fM(\tau,1)$ and $\fM(\tau,2)$ are $\D$-simple. Therefore  $\fL(g,\varrho)\simeq\fS(g,\varrho)$ holds for these weights.
\end{thm}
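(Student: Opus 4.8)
By Corollary \ref{cor:verma simple xtop}, a Verma module $\fM(g,\varrho)$ is simple if and only if $\ytop(\xtop\ot m)\neq 0$ for some (equivalently, by $\D(G)$-simplicity, any) $m\in M(g,\varrho)$, where $\xtop=\xij{12}\xij{13}\xij{12}\xij{23}$ and $\ytop$ spans $\BV^{4}(\oV)$. So for each of the four weights $(e,-)$, $(\sigma,+)$, $(\tau,1)$, $(\tau,2)$ the plan is to produce one convenient basis element $m\in\bB_M$ and show by an explicit computation that $\ytop(\xtop\ot m)$ is a nonzero element of $\fM^0=\ku 1\ot M(g,\varrho)$. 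The conclusion $\fL(g,\varrho)\simeq\fS(g,\varrho)$ then follows at once, since a simple Verma module is its own head and its own socle.

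The main work is the computation of $\ytop(\xtop\ot m)$. I would not attempt to apply an abstract $\ytop$; instead I would iterate the action of the single generator $\yij{12}$ (and its $\Sn_3$-conjugates, via \eqref{eq:con uno es suficiente}) four times, using the commutation rules \eqref{eq:de los ij iguales}, \eqref{eq:de los ij no iguales} to push each $\yij{ij}$ past the $x$'s in $\xtop$, and the diagonal $\DSn$-action on $\fM$ to evaluate the resulting group-algebra and $\ku^{\Sn_3}$ terms on $m$. Concretely: $\BV^{4}(\oV)$ is one-dimensional, spanned say by $\ytop=\yij{12}\yij{13}\yij{12}\yij{23}$ (the dual-side analogue of $\xtop$ under Lemma \ref{le:iso dual op}); applying it to $\xtop\ot m$ and repeatedly using the relations reduces the expression to a combination of $1\ot h\cdot m$ with $h\in\DSn$, and one checks the coefficient is nonzero. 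This is exactly the kind of bookkeeping carried out in the Appendix's action formulas for $\yij{12}$, so in practice the proof reduces to reading off those formulas four times in succession.

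A cleaner route, which I would try first, is to use the $\DSn$-module structure rather than brute force. As $\DSn$-modules $\fM(g,\varrho)=\BV(V)\ot M(g,\varrho)$, and the top degree piece $\fMsoc=\BV^{4}(V)\ot M(g,\varrho)\simeq M(e,+)\ot M(g,\varrho)\simeq M(g,\varrho)$ by Lemma \ref{descomposicion de BV as DS3mod}(i). The composite $\fM^0\xrightarrow{\ \xtop\cdot\ }\fMsoc\xrightarrow{\ \ytop\cdot\ }\fM^0$ is, up to scalar, a $\DSn$-endomorphism of $M(g,\varrho)$ (it is a morphism of $\DSn$-modules by \eqref{eq:the action is in ydg} applied to the action maps), hence a scalar by Schur's Lemma; the Verma module is simple precisely when that scalar is nonzero. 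So it suffices to evaluate the scalar on one vector for each of the four weights. The first map is injective on $\fM^0$ (it is $m\mapsto \xtop\otimes m$, clearly injective), so the only thing to rule out is that the second map, $\BV^{4}(\oV)$ acting from $\fMsoc$ down to $\fM^0$, kills it; equivalently, via the pairing in Theorem \ref{teo:L dual S}, that $\langle \ytop\cdot(\xtop\ot m),\,-\rangle$ vanishes. I expect this to come down to a single nonvanishing of a structure constant coming from the $\yij{}$–$\xij{}$ relations, and the expected obstacle is purely computational: correctly tracking the $\delta_g$ and sign contributions in \eqref{eq:de los ij iguales}–\eqref{eq:de los ij no iguales} through four commutations. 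One should also double-check the four weights individually, since the relevant scalar depends on $(g,\varrho)$ — and indeed for the remaining weights $(e,+)$, $(e,\rho)$, $(\tau,0)$, $(\sigma,-)$ this scalar must vanish, consistent with those Verma modules being non-simple.
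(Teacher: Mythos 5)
Your proposal is correct and uses essentially the same approach as the paper: invoke Corollary \ref{cor:verma simple xtop} and verify $\ytop(\xtop\ot m)\neq 0$ for a chosen basis vector $m$ in each of the four cases, reading off the Appendix's explicit $\yij{12}$-action formulas. The Schur's-lemma reformulation you offer is a nice conceptual gloss on why one vector suffices, but the paper's argument — and the substance of yours — is the same direct computation, and your observation that a simple Verma module is automatically its own head and socle correctly accounts for the final isomorphism $\fL(g,\varrho)\simeq\fS(g,\varrho)$.
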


\begin{proof}
By Corollary \ref{cor:verma simple xtop}, it is enough to check that $\ytop(\xtop v)\neq 0$ for some $v\in M(g,\varrho)$. In the case $\fM{(e,-)}$, using the calculations done in the appendix we have that $\ytop(\xtop \mm{e})=-12\mm{e}\neq 0$. For $\fM{(\sigma,+)}$, in the same way we obtain $\ytop(\xtop \mm{\sigma\tau})=2\mm{\sigma\tau}\neq 0$. Finally, in both cases $\fM{(\tau, 1)}$ and $\fM{(\tau, 2)}$ we have that $\ytop(\xtop \mm{\tau^{-1}})=-3\mm{\tau^{-1}}\neq 0$.

Therefore the isomorphism $\fL(g,\varrho)\simeq\fS(g,\varrho)$ holds because $\BV^{4}(V)\simeq M(e,+)$.
\end{proof}

\subsection{The Verma module \texorpdfstring{$\fM{(\sigma,-)}$}{msigmamenos}}

The aim of this subsection is to prove the next theorem which describes the submodules of $\fM{(\sigma,-)}$. As a byproduct, we find $\fL{(\sigma,-)}$ and the remaining simple $\D$-modules as subquotients of  $\fM{(\sigma,-)}$.

\begin{figure}[h]
\begin{center}
\includegraphics{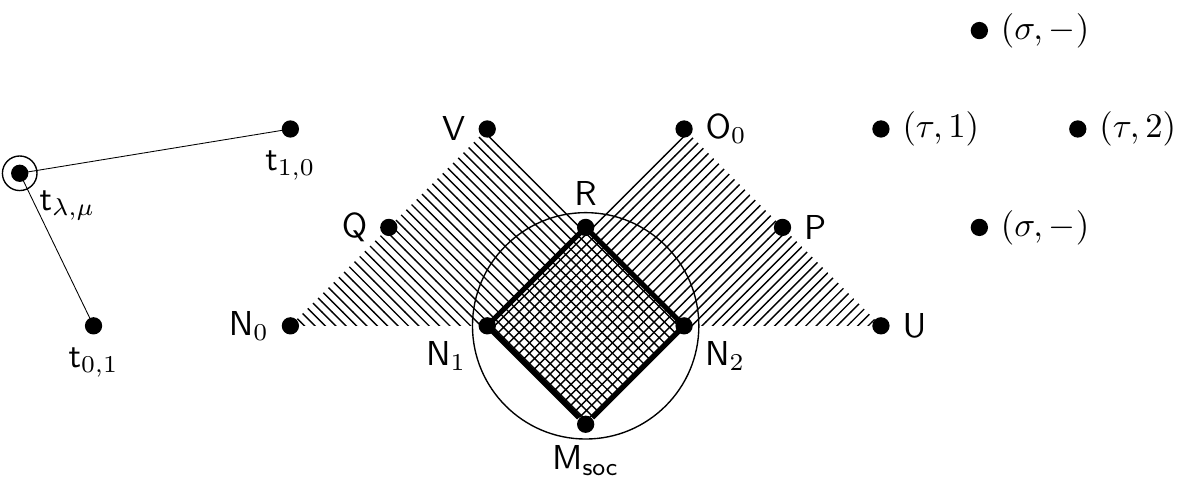}
\caption{Submodules of $\fM(\sigma,-)$}\label{fig:sigma menos}
\end{center}
\end{figure}

\begin{thm}\label{teo:el latice de sigma menos}
Every $\D$-submodule of $\fM{(\sigma,-)}$ is obtained by adding the $\D$-submodules
$$
\fS(\sigma,-),\quad\fO\fP\fU,\quad\fV\fQ\fN\quad\mbox{and}\quad\fT_{\lambda,\mu}\quad\mbox{with}\quad \lambda,\mu\in\ku.
$$
In particular, $\fX{(\sigma,-)}=\fO\fP\fU+\fV\fQ\fN+\fT_{1,0}+\fT_{0,1}$ and
$$
\fL(\sigma,-)\simeq M(\sigma,-)\oplus M(\tau,1)\oplus M(\tau,2)\oplus M(\sigma,-).
$$
as $\DSn$-modules. Moreover, $\bigl\{\fL(\sigma,-),\fL(\tau,0),\fL(e,\rho),\fL(e,+),\fL(e,+),\fL(\sigma,-)\bigr\}$ are the composition factors of $\fM(\sigma,-)$.
\end{thm}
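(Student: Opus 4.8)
The plan is to carry out the algorithm of Remark \ref{obs:description submodule gen by a submodule 2} starting from each simple $\DSn$-submodule of $\fM^0=\ku1\ot M(\sigma,-)$, and then to organize the resulting highest-weight submodules together with the socle into a complete list. Concretely, I would first identify $\fM^0\simeq M(\sigma,-)$ and decompose $V\ot M(\sigma,-)$, $\oV\ot M(\sigma,-)$ using the fusion rules recalled in \S\ref{subsub:fusion rules}: $M(\sigma,-)\ot M(\sigma,-)\simeq M(e,+)\oplus M(e,\rho)\oplus M(\tau,0)\oplus M(\tau,1)\oplus M(\tau,2)$. Applying the action map $\mu$ (i.e.\ multiplication by $\yij{12}$ and its conjugates, using the explicit formulas from the Appendix) to each simple summand, Schur's Lemma forces each restriction to be zero or an isomorphism; this locates which simple $\DSn$-modules survive into $\fM^{-1}$, $\fM^{-2}$, etc. The submodules $\fO\fP\fU$, $\fV\fQ\fN$ should emerge as the $\D$-submodules generated by the $M(\tau,i)$-type pieces that the action map does \emph{not} annihilate, and $\fS(\sigma,-)=\BV(\oV)\fMsoc$ is handed to us by Theorem \ref{teo:socle simple}, with $\fMsoc\simeq\BV^4(V)\ot M(\sigma,-)\simeq M(\sigma,-)$ by Lemma \ref{descomposicion de BV as DS3mod}\eqref{item: 0 4}.

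The delicate new phenomenon is the family $\fT_{\lambda,\mu}$: these are the non-homogeneous submodules anticipated in the Remarks after Theorem \ref{teo:L dual S} and in the final bullet of the introduction. I would produce them by noting that in some degree there are \emph{two} isomorphic simple $\DSn$-constituents (two copies of $M(\sigma,-)$ appear in $\fL(\sigma,-)$), so the element on which $\mu$ must vanish to generate a proper submodule is only determined up to a pencil $\lambda\,(\cdot)+\mu\,(\cdot)$ mixing a degree-$0$ vector with a lower-degree vector; each choice of $[\lambda:\mu]$ gives a submodule $\fT_{\lambda,\mu}$, and Lemma \ref{le:si esta 1 mas algo entonces es todo el verma} explains why only $\fT_{1,0}$ and $\fT_{0,1}$ (the ``homogeneous-looking'' members, or rather the two that together generate the rest) are needed for $\fX(\sigma,-)$. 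Then $\fX(\sigma,-)=\fO\fP\fU+\fV\fQ\fN+\fT_{1,0}+\fT_{0,1}$ is checked to be a proper $\D$-submodule and, by Theorem \ref{teo:unico submod max}, it must be \emph{the} maximal one; computing $\fM(\sigma,-)/\fX(\sigma,-)$ on $\DSn$-characters yields $\fL(\sigma,-)\simeq M(\sigma,-)\oplus M(\tau,1)\oplus M(\tau,2)\oplus M(\sigma,-)$.

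For the claim that \emph{every} $\D$-submodule is a sum of $\fS(\sigma,-)$, $\fO\fP\fU$, $\fV\fQ\fN$ and the $\fT_{\lambda,\mu}$, I would argue by downward induction on the top degree of a submodule $\fN\subseteq\fX(\sigma,-)$: pick a nonzero homogeneous component of maximal degree $n$ in $\fN$; it is a $\DSn$-submodule of $\fM^n$, hence (by the decompositions above) a sum of known simple pieces, and the $\D$-submodule it generates is one of $\fO\fP\fU$, $\fV\fQ\fN$, $\fS(\sigma,-)$ or — in the degree where the multiplicity-two phenomenon occurs — forces a compatible choice of $[\lambda:\mu]$; subtracting that submodule strictly lowers the top degree, so the induction closes, the finitely many possibilities being controlled because $\BV(V)$ is finite-dimensional. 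The composition series $\{\fL(\sigma,-),\fL(\tau,0),\fL(e,\rho),\fL(e,+),\fL(e,+),\fL(\sigma,-)\}$ then falls out by reading off the successive quotients of the chain $0\subset\fS(\sigma,-)\subset\cdots\subset\fX(\sigma,-)\subset\fM(\sigma,-)$ and matching $\DSn$-characters with Theorem \ref{teo:bi con L}.

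The main obstacle I anticipate is bookkeeping of the action map in the ``collision'' degree: one must compute $\yij{12}$ (and its $\Sn_3$-conjugates) on enough basis vectors $x\mm{g}$ to pin down exactly which linear combination $\lambda\,v_0+\mu\,v_{<0}$ is killed, and to verify that the resulting $\fT_{\lambda,\mu}$ are genuinely distinct non-homogeneous submodules rather than artifacts — this is where the explicit Appendix formulas and the transitivity observation of Remark \ref{obs:description submodule gen by a submodule 1} do the heavy lifting, but the combinatorics of the $12$-dimensional Fomin–Kirillov algebra make it long rather than conceptually hard.
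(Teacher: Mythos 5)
Your overall architecture (socle via Theorem \ref{teo:socle simple}, highest-weight submodules via the algorithm of Remark \ref{obs:description submodule gen by a submodule 2}, then a classification of all submodules) matches the paper's, but two steps are genuinely wrong. First, your description of the family $\fT_{\lambda,\mu}$ misidentifies where the pencil lives: you say it mixes ``a degree-$0$ vector with a lower-degree vector'', attributing it to the two copies of $M(\sigma,-)$ in the head. In fact $\fM^0(\sigma,-)\simeq M(\sigma,-)$ contains no weight-$(e,+)$ vector at all; the relevant multiplicity-two phenomenon is the weight $(e,+)$ occurring once in degree $-1$ and once in degree $-3$ (inside $\BV^1(V)\ot M(\sigma,-)$ and $\BV^3(V)\ot M(\sigma,-)$), and $\ft_{\lambda,\mu}=\ku(\lambda\mm{e}^1+\mu\mm{e}^3)$ is a pencil of one-dimensional $\DSn$-submodules spanning those two degrees. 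Your version is not a harmless slip: by Lemma \ref{le:si esta 1 mas algo entonces es todo el verma}, any element with a nonzero degree-$0$ component generates the whole Verma module, so the pencils you describe would never produce proper submodules; that lemma works against your picture rather than explaining why $\fT_{1,0}$ and $\fT_{0,1}$ suffice (they suffice simply because together they contain $\mm{e}^1$ and $\mm{e}^3$, hence every $\fT_{\lambda,\mu}$). Also note that ``proper $+$ Theorem \ref{teo:unico submod max}'' does not by itself make $\fO\fP\fU+\fV\fQ\fN+\fT_{1,0}+\fT_{0,1}$ maximal; maximality only follows from the classification statement itself.

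Second, your completeness argument by downward induction on the top degree breaks precisely because submodules of $\fM(\sigma,-)$ need not be homogeneous (this is the point of the $\fT_{\lambda,\mu}$ and of the non-homogeneous copies of $M(e,\rho)$, $M(\tau,\ell)$, $M(\sigma,-)$). The top-degree homogeneous components of elements of a submodule $\fN$ form a $\DSn$-submodule of some $\fM^n$, but that $\DSn$-submodule need not be contained in $\fN$, so ``the $\D$-submodule it generates'' cannot be subtracted from $\fN$ and the induction does not close. The paper's route avoids this: since $\DSn$ is semisimple, write $\fN=\oplus_t S_t$ with each $S_t$ a (possibly non-homogeneous) simple $\DSn$-submodule, so $\fN=\sum_t\D S_t$, and compute $\D S_t$ case-by-case according to the weight of $S_t$, using the multiplicity pattern of weights in $\fM(\sigma,-)$: a weight-$(e,+)$ simple gives some $\fT_{\lambda,\mu}$; a weight-$(e,\rho)$ simple sits inside $\fU\oplus\fV$ and generates $\fO\fP\fU$, $\fV\fQ\fN$ or their sum; weight $(\tau,0)$ or $(\sigma,+)$ stays inside $\fO\fP\fU+\fV\fQ\fN$; and a ``tilted'' simple of weight $(\tau,\ell)$, $\ell\neq0$, or of weight $(\sigma,-)$ either lies in $\fS(\sigma,-)$ or is shown by explicit computation (e.g.\ $\yij{12}\fo_\ell\neq0$, or Lemma \ref{le:si esta 1 mas algo entonces es todo el verma}) to generate all of $\fM(\sigma,-)$, hence cannot occur in a proper submodule. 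You would need to replace your induction by an argument of this kind (or otherwise handle non-homogeneous simples directly) for the first assertion of the theorem to be proved.
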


The submodules mentioned in the statement will be given in the successive lemmas but the Figure \ref{fig:sigma menos} helps to visualise them. Namely, each dot represents a weight of $\fM(\sigma,-)$. This follows from the fusion rules given in \S\ref{subsub:fusion rules} since $\fM(\sigma,-)=\BV(V)\ot M(\sigma,-)$: $\fO_0$ is of weight $(\tau,0)$, $U$ and $V$ are of weight $(e,\rho)$, $P$ and $Q$ are of weight $(\sigma,+)$, $R$ and $\fMsoc$ are of weight $(\sigma,-)$, $\ft_{\lambda,\mu}$ is of weight $(e,+)$ for any $\lambda,\mu\in\ku$, $\fN_{\ell}$ is of weight $(\tau,\ell)$ for $\ell=0,1,2$. Of course, the weights in the same row are in the same homogeneous component and the degree decreases from the top to the bottom.

Then, the dots connected by a thick line represent the socle $\fS(\sigma,-)$, Lemma \ref{le:socle sigma menos}. The dots over the northwest lines form $\fV\fQ\fN$, Lemma \ref{le: VQN}, and those over the northeast lines form $\fO\fP\fU$, Lemma \ref{le: UPO}. The dots enclosed by circles represent a $\D$-submodule $\fT_{\lambda,\mu}$, Lemma \ref{le:T}. The isolated dots on the right hand side represent the weights of $\fL(\sigma,-)$.

We start by calculating the socle of the Verma module and then we describe the highest-weight submodules. For that, we shall use the algorithm described in Remark \ref{obs:description submodule gen by a submodule 2}. In order to avoid extra computations, we approximate the kernel of the action map before applying this algorithm using Remark \ref{obs:description submodule gen by a submodule 1}.

Let $\fN_\ell$ and $\fR$ be the $\DSn$-submodules of $\fM{(\sigma,-)}$ generated by
\begin{itemize}
\item $\fn_\ell=\zeta^{\ell}\xij{13}\xij{12}\xij{23}\mm{23}+\zeta^{-\ell}\xij{12}\xij{13}\xij{23}\mm{12}-\xij{12}\xij{13}\xij{12}\mm{13}$, for $\ell=0,1,2$, and
\smallskip
\item $\fr=-(\xij{12}\xij{13}+\xij{13}\xij{23})\mm{12}-(\xij{12}\xij{23}+\xij{13}\xij{12})\mm{13}$,
\end{itemize}
respectively. By \eqref{eq:tau en sigma sigma}, $\fn_\ell$ identifies with the element $\mm{\tau}_\ell$ and belongs to the submodule of weight $(\tau,\ell)$. Hence $\fN_\ell\simeq M(\tau,\ell)$ for $\ell=0,1,2$.

\begin{lema}\label{le:socle sigma menos}
The socle of $\fM(\sigma,-)$ is
$$
\fS(\sigma,-)=\fMsoc\oplus\fN_1\oplus\fN_2\oplus\fR,
$$
where $\fMsoc\simeq M(\sigma,-)\simeq\fR$ and $\fN_\ell\simeq M(\tau,\ell)$ as $\DSn$-modules, $\ell=1,2$.

Moreover, $\fS(\sigma,-)$ is a highest-weight module of weight $(\sigma,-)$ and therefore
$$
\fS(\sigma,-)\simeq\fL(\sigma,-).
$$
\end{lema}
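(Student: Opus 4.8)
The plan is to compute the socle of $\fM(\sigma,-)$ as the $\D$-submodule $\BV(\oV)\fMsoc$ --- this is what Theorem \ref{teo:socle simple} tells us it is, and it is already known to be $\D$-simple --- by running the algorithm of Remark \ref{obs:description submodule gen by a submodule 2} starting from $\fMsoc$, and then to recognise the answer as a highest-weight module through Theorem \ref{teo:bi con L}. The first thing to record is the starting data: by Lemma \ref{descomposicion de BV as DS3mod}(i), $\fMsoc=\BV^{4}(V)\ot M(\sigma,-)\simeq M(e,+)\ot M(\sigma,-)\simeq M(\sigma,-)$ as $\DSn$-modules, and $V\fMsoc=\BV^{5}(V)\ot M(\sigma,-)=0$.

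Then I would iterate the algorithm in increasing degree. At each stage the relevant tensor product $\oV\ot(-)$ is broken up using the fusion rules of \S\ref{subsub:fusion rules}: $\oV\ot\fMsoc\simeq M(\sigma,-)\ot M(\sigma,-)\simeq M(e,+)\oplus M(e,\rho)\oplus\bigoplus_{\ell=0}^{2}M(\tau,\ell)$; then $\oV\ot\fN_\ell\simeq M(\sigma,-)\ot M(\tau,\ell)\simeq M(\sigma,+)\oplus M(\sigma,-)$; and finally $\oV\ot\fR\simeq M(\sigma,-)\ot M(\sigma,-)$. For every simple summand $S$ appearing here, Schur's Lemma forces the action map $\mu$ to be zero or injective on $S$; Remark \ref{obs:description submodule gen by a submodule 1} then lets me decide which by evaluating $\mu$ on a single basis vector of $S$, and \eqref{eq:con uno es suficiente} reduces that evaluation to the action of $\yij{12}$ tabulated in the Appendix. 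The expected outcome is: $\mu$ kills the summands $M(e,+)$, $M(e,\rho)$, $M(\tau,0)$ of $\oV\ot\fMsoc$ and is injective on $M(\tau,1)$, $M(\tau,2)$; since $\fM^{-3}=\BV^{3}(V)\ot M(\sigma,-)\simeq M(\sigma,-)\ot M(\sigma,-)$ contains each $M(\tau,\ell)$ with multiplicity one, and $\fn_\ell$ sits in the corresponding copy (the identification made right before the lemma), this yields $\oV\fMsoc=\fN_1\oplus\fN_2$. Next $\mu$ kills the $M(\sigma,+)$-part of each $\oV\ot\fN_\ell$ and is injective on the $M(\sigma,-)$-part, the two resulting copies ($\ell=1,2$) coinciding with the submodule $\fR$ generated by $\fr$; this gives both $\fR\simeq M(\sigma,-)$ and $\oV(\fN_1\oplus\fN_2)=\fR$. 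Finally a direct check (again via \eqref{eq:con uno es suficiente} and Remark \ref{obs:description submodule gen by a submodule 1}) gives $\oV\fR=0$. Hence the algorithm stops and
$$
\fS(\sigma,-)=\BV(\oV)\fMsoc=\fMsoc\oplus\fN_1\oplus\fN_2\oplus\fR,
$$
the sum being direct because the four summands sit in pairwise distinct homogeneous degrees $-4,-3,-3,-2$ and the two degree $-3$ pieces are non-isomorphic as $\DSn$-modules; this also delivers the stated isomorphism types.

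For the last assertion I would argue: $\fS(\sigma,-)$ is $\D$-simple by Theorem \ref{teo:socle simple}, the summand $\fR$ is a $\DSn$-submodule of weight $(\sigma,-)$ annihilated by $\oV$, and by simplicity $\fR$ generates $\fS(\sigma,-)$ as a $\D$-module; so $\fS(\sigma,-)$ is a simple highest-weight module of weight $(\sigma,-)$, whence $\fS(\sigma,-)\simeq\fL(\sigma,-)$ by Theorem \ref{teo:bi con L}(i).

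The main obstacle is the computational core of the second paragraph --- applying $\mu$ to each simple summand and deciding what it annihilates. Even with Remark \ref{obs:description submodule gen by a submodule 1} and \eqref{eq:con uno es suficiente} cutting this down to a small number of $\yij{12}$-evaluations, there is a fair amount of bookkeeping, and the sharpest points are confirming that $\oV\fN_1$ and $\oV\fN_2$ really are the \emph{same} copy of $M(\sigma,-)$ inside $\fM^{-2}$ (not together the full multiplicity-two isotypic component) and that $\oV\fR$ vanishes exactly --- these two facts are precisely what fix $\dim\fS(\sigma,-)=3+2+2+3=10$.
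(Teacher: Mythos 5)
Your proposal follows essentially the same route as the paper's proof: compute $\fS(\sigma,-)=\BV(\oV)\fMsoc$ via the algorithm of Remark \ref{obs:description submodule gen by a submodule 2}, decompose $\oV\ot\fMsoc$, $\oV\ot\fN_\ell$, $\oV\ot\fR$ by the fusion rules, decide vanishing with Remark \ref{obs:description submodule gen by a submodule 1}, \eqref{eq:con uno es suficiente} and the Appendix, and conclude the highest-weight claim from simplicity, $\oV\fR=0$ and Theorem \ref{teo:bi con L}. The computational outcomes you predict (only $M(\tau,1),M(\tau,2)$ survive from $\oV\fMsoc$; $\oV\fN_1=\oV\fN_2=\fR\simeq M(\sigma,-)$; $\oV\fR=0$) are exactly what the paper's explicit evaluations of $\yij{12}$ confirm, so the plan is correct, with only the routine bookkeeping left to write out.
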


\begin{proof}
We use the algorithm proposed in Remark \ref{obs:description submodule gen by a submodule 2} to compute the socle. Recall that $\fS(\sigma,-)=\BV(\oV)\fMsoc$ by Theorem \ref{teo:socle simple}.

As $\DSn$-module, we have that
$$
\oV\ot\,\fMsoc=\ku\{\yij{ij}\ot\,\xtop\mm{ij}\mid i\neq j\}\oplus\ku\{\yij{ij}\ot\,\xtop\mm{jk}\mid i\neq j\neq k\neq i\}
$$
by \eqref{eq:first decomposition}. Since $\yij{12}(\xtop\mm{12})=0$, the action map is zero in the first submodule by Remark \ref{obs:description submodule gen by a submodule 1}. The second submodule decomposes into the direct sum $M(\tau,0)\oplus M(\tau,1)\oplus M(\tau,2)$ where $(\zeta^\ell+\zeta^{-\ell}\tau^{-1}+\tau)(\yij{12}\ot\,\xtop\mm{23})$ belongs to the submodule of weight $(\tau,\ell)$ by \eqref{eq:tau en sigma sigma}. The action map applied to these elements gives
\begin{align*}
(\zeta^\ell+\zeta^{-\ell}\tau^{-1}+\tau)\yij{12}(\xtop\mm{23})&=(\zeta^\ell+\zeta^{-\ell}\tau^{-1}+\tau)(-\xij{12}\xij{13}\xij{23}\mm{12}+\xij{13}\xij{12}\xij{23}\mm{23})\\
&=(\zeta^{-\ell}-\zeta^{\ell})\xij{12}\xij{13}\xij{23}\mm{12}+(\zeta^{\ell}-1)\xij{13}\xij{12}\xij{23}\mm{23}
+(\zeta^{-\ell}-1)\xij{12}\xij{13}\xij{12}\mm{13},
\end{align*}
which is zero iff $\ell=0$. Otherwise, we obtain $\frac{1}{1-\zeta^{-\ell}}\fn_\ell$  and hence
$$
\oV\fMsoc=\fN_1\oplus \fN_2,
$$
recall Remark \ref{obs:description submodule gen by a submodule 2}.

Now, we calculate $\oV\fN_\ell$. By \eqref{eq:sigma tensor tau}, $\oV\ot\fN_\ell\simeq M(\sigma,+)\oplus M(\sigma,-)$ as $\DSn$-modules and the element $\zeta^{\ell}(1\pm(23))\yij{12}\ot \fn_\ell$ belongs to the submodule of weight $(\sigma,\pm)$. We apply the action map to these elements and obtain
\begin{align*}
\fr_\ell^{\pm}:=\zeta^{\ell}(1\pm(23))\yij{12}\fn_\ell&=\zeta^{\ell}(1\pm(23))(\xij{23}\xij{12}\mm{12}+\xij{23}\xij{13}\mm{13})=\zeta^{\ell}(1\mp1)(\xij{23}\xij{12}\mm{12}+\xij{23}\xij{13}\mm{13}).
\end{align*}
Hence $\fr_\ell^{+}=0$ and $0\neq\frac{1}{\zeta^{\ell}2}\fr_\ell^{-}=\fr\in\fR[(23)]$. Therefore
$$
\fR=\oV\fN_1=\oV\fN_2\simeq M(\sigma,-)\quad\mbox{for $\ell=1,2.$}
$$

Finally, we consider $\yij{12}\ot(13)\fr\in\oV[(12)]\ot \fR[(12)]$ and $\yij{12}\ot\fr\in\oV[(12)]\ot \fR[(23)]$. Since $\yij{12}(13)\fr=0=\yij{12}\fr$, we see that $\oV\fR=0$ by Remark \ref{obs:description submodule gen by a submodule 1}. Therefore
$$
\fS(\sigma,-)=\BV(\oV)\fMsoc=\fMsoc\oplus\fN_1\oplus\fN_2\oplus\fR.
$$
Moreover, $\fS(\sigma,-)$ is simple and generated by $\fR\simeq M(\sigma,-)$ with $\oV\fR=0$. Therefore $\fS(\sigma,-)=\fL(\sigma,-)$.
\end{proof}

Let $\fV$ and $\fQ$ be the $\DSn$-submodules of $\fM(\sigma,-)$ generated by
\begin{itemize}
\item $\fv=\zeta^{-1}\xij{23}\mm{23}+\zeta\xij{13}\mm{13}+\xij{12}\mm{12}$ and
\smallskip
\item $\fq=\xij{12}\xij{23}\mm{23}-\xij{12}\xij{13}\mm{13}$,
\end{itemize}
respectively. Note that $\fV$ is of weight $(e,\rho)$ by \eqref{eq:rho en sigma sigma}.

\begin{lema}\label{le: VQN}
Let $\fV\fQ\fN=\fS(\sigma,-)\oplus\fV\oplus\fQ\oplus\fN_0$. Then
\begin{enumerate}[(i)]
\item\label{le: VQN i} $\fV\fQ\fN=\D\fV$ is a highest-weight submodule of weight $(e,\rho)$.
\smallskip
\item\label{le: VQN ii}  $\fV\fQ\fN=\D\fQ=\D\fN_0$.
\smallskip
\item\label{le: VQN iii} $\fQ$ is of weight $(\sigma,+)$.
\end{enumerate}
\end{lema}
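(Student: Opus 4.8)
The plan is to run the algorithm of Remark~\ref{obs:description submodule gen by a submodule 2} with $\fV$ as the starting submodule, using repeatedly that the homogeneous components of $\fM(\sigma,-)=\BV(V)\ot M(\sigma,-)$ are small, so that the fusion rules of \S\ref{subsub:fusion rules} together with a single evaluation of the action map at each stage (read off from the Appendix) determine the image. I begin by showing that $\D\fV$ is a highest-weight module of weight $(e,\rho)$. Identifying $V\simeq M(\sigma,-)$ via $\xij{ij}\leftrightarrow\mm{ij}$ and writing $\mm{12}=\mm{\sigma}$, $\mm{23}=\mm{\sigma\tau}$, $\mm{13}=\mm{\sigma\tau^2}$, the element $\fv$ corresponds under \eqref{eq:rho en sigma sigma} to the image of $\mm{\tau}_\rho$ by the inclusion $M(e,\rho)\hookrightarrow M(\sigma,-)\ot M(\sigma,-)$, so $\fV=\DSn\fv\simeq M(e,\rho)$ and $\fV$ lies in the degree $-1$ component of $\fM(\sigma,-)$. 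Since $\supp\oV=\cO_\sigma$ and $\supp\fV=\{e\}$ are disjoint, the first summand of $\oV\ot\fV$ in \eqref{eq:first decomposition} vanishes, so by Remark~\ref{obs:description submodule gen by a submodule 1} it is enough to check $\yij{12}\cdot\fv=0$, one of the evaluations recorded in the Appendix. Hence $\oV\fV=0$, so $\D\fV=\BV(V)\fV$ by \eqref{eq:highest module} and $\D\fV$ is a highest-weight module of weight $(e,\rho)$.

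Next I would compute $\BV(V)\fV$ one homogeneous degree at a time. In degree $-2$: $V\ot\fV\simeq M(\sigma,-)\ot M(e,\rho)\simeq M(\sigma,+)\oplus M(\sigma,-)$, and a short computation with the relations \eqref{eq:rel of BV} shows $\mu$ is injective here with image $\fQ\oplus\fR$, where $\fq$ generates $\fQ$ and $\fr$ generates the submodule of Lemma~\ref{le:socle sigma menos}. Evaluating $\sigma$ on the degree-$\sigma$ vector $\fq$, using $\sigma\cdot\xij{12}=-\xij{12}$, $\sigma\cdot\xij{13}=-\xij{23}$, $\sigma\cdot\xij{23}=-\xij{13}$ and $\sigma\cdot\mm{23}=-\mm{13}$, $\sigma\cdot\mm{13}=-\mm{23}$, gives $\sigma\cdot\fq=\fq$; hence $\fQ\simeq M(\sigma,+)$, i.e. $\fQ$ is of weight $(\sigma,+)$ (which settles the third assertion), and $\fR\simeq M(\sigma,-)$. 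In degree $-3$: decompose $V\ot\fQ\simeq M(\sigma,-)\ot M(\sigma,+)$ and $V\ot\fR\simeq M(\sigma,-)\ot M(\sigma,-)$ by the fusion rules; applying $\mu$ and discarding the summands on which it vanishes, the image is $\fN_0\oplus\fN_1\oplus\fN_2$. In degree $-4$: $V\ot\fN_\ell\simeq M(\sigma,-)\ot M(\tau,\ell)\simeq M(\sigma,+)\oplus M(\sigma,-)$, and since the degree $-4$ component of $\fM(\sigma,-)$, namely $\fMsoc$, is $\simeq M(\sigma,-)$ while $x\cdot\fn_0\neq0$ for a suitable $x\in V$ (as $\BV^4(V)$ is the integral and $\xij{13}\xij{12}\xij{23}\neq0$), the image is $\fMsoc$. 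Finally $V\fMsoc=\BV^5(V)\ot M(\sigma,-)=0$, so the process stops and, using $\fS(\sigma,-)=\fMsoc\oplus\fN_1\oplus\fN_2\oplus\fR$ from Lemma~\ref{le:socle sigma menos},
\begin{align*}
\D\fV=\BV(V)\fV&=\fV\oplus\fQ\oplus\fR\oplus\fN_0\oplus\fN_1\oplus\fN_2\oplus\fMsoc\\
&=\fS(\sigma,-)\oplus\fV\oplus\fQ\oplus\fN_0=\fV\fQ\fN,
\end{align*}
proving the first assertion.

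For the identity $\fV\fQ\fN=\D\fQ=\D\fN_0$, the inclusions $\D\fQ,\D\fN_0\subseteq\D\fV=\fV\fQ\fN$ are clear since $\fQ,\fN_0\subseteq\fV\fQ\fN$; for the reverse inclusions I would climb back up with $\oV$. As $\fQ\subseteq\D\fV$ lies in degree $-2$ and $\deg\oV=1$, the subspace $\oV\fQ$ lies in the degree $-1$ component of $\D\fV=\fV\fQ\fN$, which is $\fV\simeq M(e,\rho)$; decomposing $\oV\ot\fQ\simeq M(\sigma,-)\ot M(\sigma,+)\simeq M(e,-)\oplus M(e,\rho)\oplus\bigoplus_\ell M(\tau,\ell)$, only the $M(e,\rho)$-summand can have nonzero image, and a single evaluation of the action map (from the Appendix) shows this image is all of $\fV$; hence $\fV\subseteq\D\fQ$ and $\D\fQ=\fV\fQ\fN$. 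Likewise $\oV\fN_0$ lies in the degree $-2$ component of $\fV\fQ\fN$, which is $\fQ\oplus\fR$, and decomposing $\oV\ot\fN_0\simeq M(\sigma,-)\ot M(\tau,0)\simeq M(\sigma,+)\oplus M(\sigma,-)$, one checks with the Appendix that the $M(\sigma,+)$-summand maps onto $\fQ$; thus $\fQ\subseteq\D\fN_0$, whence $\D\fN_0\supseteq\D\fQ=\fV\fQ\fN$ and so $\D\fN_0=\fV\fQ\fN$.

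The main obstacle is bookkeeping rather than depth: no individual step is hard, but at each application of $\mu$ (and of its $\oV$-analogue) one must determine exactly which simple $\DSn$-summands are annihilated and which survive, which is where the explicit formulas of the Appendix --- and the attendant danger of sign and coefficient slips --- come into play. The smallness of the homogeneous components of $\fM(\sigma,-)$ and a cross-check against the socle already computed in Lemma~\ref{le:socle sigma menos} keep this manageable.
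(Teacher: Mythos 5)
Your proof follows essentially the same route as the paper's: you run the algorithm of Remark~\ref{obs:description submodule gen by a submodule 2} starting from $\fV$, identify the image of the action map degree by degree using the fusion rules and the Appendix (getting $V\fV=\fQ\oplus\fR$, $V\fQ\subseteq\fN_0\oplus\fN_1\oplus\fN_2$, then into $\fMsoc$), and deduce part~(iii) by evaluating $\sigma$ on $\fq$ after the degree-$(-2)$ step. Your argument for part~(ii) is organized slightly differently from the paper's — you first establish $\D\fQ=\fV\fQ\fN$ by showing $\oV\fQ$ is all of $\fV$, then use $\fQ\subseteq\D\fN_0$ to propagate the equality, whereas the paper exhibits a single explicit element $\frac{1}{2}\yij{12}\fq=\frac{1}{3}\yij{12}\yij{13}\fn_0\in\fV\setminus\{0\}$ that simultaneously yields both $\fV\subset\D\fQ$ and $\fV\subset\D\fN_0$ — but this is a cosmetic reorganization of the same bookkeeping, not a different method, and both are sound.
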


\begin{proof}
Since $\yij{12}\fv=0$, $\oV\fV=0$ by Remark \ref{obs:description submodule gen by a submodule 1} and thus $\D\fV$ is a highest-weight module. Hence we will use Remark \ref{obs:description submodule gen by a submodule 2} to compute $\D\fV=\BV(V)\fV$.

By \eqref{eq:sigma tensor rho}, $V\ot\fV\simeq M(\sigma,+)\oplus M(\sigma,-)$ where $(1\pm\sigma)\xij{12}\ot\fv$ belongs to the submodule of weight $(\sigma,\pm)$. The action map applied to these elements gives
$$
(1+\sigma)\xij{12}\fv=(\zeta^{-1}-\zeta)\fq\quad\mbox{and}\quad(1-\sigma)\xij{12}\fv=(13)\fr
$$
Hence
$V\fV=\fR\oplus\fQ$. In particular, $\fQ\simeq M(\sigma,+)$ which proves \eqref{le: VQN iii}.

Now, we just have to compute $V\fQ$ since $V\fR\subset\fS(\sigma,-)$. As $\D(\Sn_3)$-modules, $V\ot\fQ\simeq M(e,-)\oplus M(e,\rho)\oplus\bigoplus_{\ell=0,1,2}M(\tau,\ell)$, cf. \S\ref{subsub:fusion rules}. The action map on the components of weight $(e,-)$ and $(e,\rho)$ is zero since $\xij{12}\fq=0$, recall Remark \ref{obs:description submodule gen by a submodule 1}. Meanwhile, the action map on the components of weight $(\tau,\ell)$ is not zero. In fact, $(\zeta^\ell+\zeta^{-\ell}\tau^{-1}+\tau)(\xij{12}\ot(13)\fq)$ belongs to the submodule of weight $(\tau,\ell)$ by \eqref{eq:tau en sigma sigma} and
\begin{align*}
(\zeta^\ell+\zeta^{-\ell}\tau^{-1}+\tau)\xij{12}(13)\fq&=(\zeta^\ell+\zeta^{-\ell}\tau^{-1}+\tau)(1-\zeta^2)\left(\xij{12}\xij{13}\xij{23}\mm{12}-\xij{12}\xij{13}\xij{12}\mm{13}\right)\\
&=(\zeta^2-1)\left(\zeta^{-\ell}\xij{12}\xij{13}\xij{23}\mm{12}-\xij{12}\xij{13}\xij{12}\mm{13}
+\zeta^{\ell}\xij{13}\xij{12}\xij{23}\mm{23}\right).
\end{align*}
Hence $V\fQ=\fN_0\oplus\fN_1\oplus\fN_2$. Since $V\fN_\ell\subseteq\fMsoc$, we conclude that $\D\fV=\fV\fQ\fN$ and \eqref{le: VQN i} follows.

Finally, we proof \eqref{le: VQN ii} by noting that $\fV\subset\D\fQ$ and $\fV\subset\D\fN_0$ since
$$
\frac{1}{2}\yij{12}\fq=\frac{1}{3}\yij{12}\yij{13}\fn_0=(\zeta^{-1}-\zeta)^{-1}(1-\sigma)\fv\in\fV.
$$
\end{proof}

Using the characterization of the simple modules given in Theorem \ref{teo:bi con L}, the next result follows directly from the above lemma.

\begin{Cor}\label{cor: l e rho cociente de VQN}
The quotient $\fV\fQ\fN/\fS(\sigma,-)$ is a simple highest-weight module of weight $(e,\rho)$. Therefore
$$\fL(e,\rho)\simeq M(e,\rho)\oplus M(\sigma,+)\oplus M(\tau,0),$$
as $\DSn$-modules. \qed
\end{Cor}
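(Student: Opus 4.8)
The plan is to read the corollary off Lemma~\ref{le: VQN} together with the classification of highest-weight modules in Theorem~\ref{teo:bi con L}; no real computation is needed. First I would rewrite the $\DSn$-decomposition $\fV\fQ\fN=\fS(\sigma,-)\oplus\fV\oplus\fQ\oplus\fN_0$ of Lemma~\ref{le: VQN} as
$$
\fV\fQ\fN/\fS(\sigma,-)\;\simeq\;\fV\oplus\fQ\oplus\fN_0\;\simeq\;M(e,\rho)\oplus M(\sigma,+)\oplus M(\tau,0)
$$
as $\DSn$-modules, using that $\fV$ is of weight $(e,\rho)$, that $\fQ$ is of weight $(\sigma,+)$ by Lemma~\ref{le: VQN}\eqref{le: VQN iii}, and that $\fN_0\simeq M(\tau,0)$. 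Since each of $\fV$, $\fQ$, $\fN_0$ meets $\fS(\sigma,-)$ trivially, their images $\overline{\fV}$, $\overline{\fQ}$, $\overline{\fN_0}$ in the quotient are pairwise non-isomorphic simple $\DSn$-submodules whose direct sum is the whole quotient.

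Next I would observe that $\fV\fQ\fN/\fS(\sigma,-)$ is a highest-weight module of weight $(e,\rho)$ in the sense of Definition~\ref{def:highest modules}: it is generated as a $\D$-module by $\overline{\fV}$ because $\fV\fQ\fN=\D\fV$ by Lemma~\ref{le: VQN}\eqref{le: VQN i}, the submodule $\overline{\fV}$ has weight $(e,\rho)$, and $\oV\,\overline{\fV}=0$ because already $\oV\fV=0$ inside $\fM(\sigma,-)$.

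The only point requiring a little care is simplicity. A nonzero $\D$-submodule $\fW$ of $\fV\fQ\fN/\fS(\sigma,-)$ is in particular a $\DSn$-submodule of $M(e,\rho)\oplus M(\sigma,+)\oplus M(\tau,0)$; since the three constituents are simple and pairwise non-isomorphic, $\fW$ must be the direct sum of a non-empty subset of $\{\overline{\fV},\overline{\fQ},\overline{\fN_0}\}$. But Lemma~\ref{le: VQN}\eqref{le: VQN i} and \eqref{le: VQN ii} give $\fV\fQ\fN=\D\fV=\D\fQ=\D\fN_0$, so passing to the quotient yields $\D\overline{\fV}=\D\overline{\fQ}=\D\overline{\fN_0}=\fV\fQ\fN/\fS(\sigma,-)$; hence whichever of the three summands $\fW$ contains, $\fW$ is already the whole quotient. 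Therefore $\fV\fQ\fN/\fS(\sigma,-)$ is simple, and by Theorem~\ref{teo:bi con L} it is the unique simple highest-weight module of weight $(e,\rho)$, namely $\fL(e,\rho)$; combined with the displayed $\DSn$-isomorphism this proves the corollary. The only obstacle, such as it is, is this multiplicity-one bookkeeping in the simplicity step; everything else is immediate from Lemma~\ref{le: VQN}.
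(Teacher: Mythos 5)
Your argument is correct and is exactly the route the paper intends: the paper states that the corollary ``follows directly'' from Lemma \ref{le: VQN} together with Theorem \ref{teo:bi con L}, and your write-up simply makes explicit the two points left implicit there, namely that $\fV\fQ\fN/\fS(\sigma,-)$ is a highest-weight module of weight $(e,\rho)$ via $\fV\fQ\fN=\D\fV$ with $\oV\fV=0$, and that simplicity follows from $\D\fV=\D\fQ=\D\fN_0=\fV\fQ\fN$ combined with the multiplicity-one $\DSn$-decomposition of the quotient. No gaps; this matches the paper's proof.
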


Let $\fU$, $\fP$ and $\fO_\ell$ be the $\DSn$-submodules generated by
\begin{itemize}
\item $\fu=-\zeta^{-1}\xij{12}\xij{13}\xij{12}\mm{23}+\zeta\xij{12}\xij{13}\xij{23}\mm{13}+\xij{13}\xij{12}\xij{23}\mm{12}$,
\smallskip
\item $\fp=-(2\xij{13}\xij{12}+\xij{12}\xij{23})\mm{23}-(2\xij{13}\xij{23}+\xij{12}\xij{13})\mm{13}$ and
\smallskip
\item $\fo_\ell=\zeta^{-\ell}\xij{13}\mm{12}+\zeta^{\ell}\xij{12}\mm{23}+\xij{23}\mm{13}$, for $\ell=0,1,2$,
\end{itemize}
respectively. We see that $\fU$ is of weight $(e,\rho)$ by \eqref{eq:rho en sigma sigma} and Lemma \ref{descomposicion de BV as DS3mod}, and $\fO_\ell$ is of weight $(\tau,\ell)$ by \eqref{eq:tau en sigma sigma}.

\begin{lema}\label{le: UPO}
Let $\fO\fP\fU=\fS(\sigma,-)\oplus\fU\oplus\fP\oplus\fO_0$. Then
\begin{enumerate}[(i)]
\item\label{le: UPO i} $\fO\fP\fU=\D\fO_0$ is a highest-weight submodule of weight $(\tau,0)$.
\smallskip
\item\label{le: UPO ii}  $\fO\fP\fU=\D\fU=\D\fP$.
\smallskip
\item\label{le: UPO iii} $\fP$ is of weight $(\sigma,+)$.
\end{enumerate}
\end{lema}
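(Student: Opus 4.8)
The proof follows the same pattern as that of Lemma~\ref{le: VQN}: we run the algorithm of Remark~\ref{obs:description submodule gen by a submodule 2} starting from the generator $\fo_0$, we locate the kernels of the successive action maps cheaply by means of Remark~\ref{obs:description submodule gen by a submodule 1}, and all the explicit evaluations of the generators of $\BV(\oV)$ are taken from the Appendix.

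For part (i), the first point is that $\D\fO_0$ is highest-weight. Since $\fO_0\simeq M(\tau,0)$ is concentrated in $\Sn_3$-degrees $\{(123),(132)\}$ while $\oV\simeq M(\sigma,-)$ lives in $\Sn_3$-degrees $\{(12),(23),(13)\}$, these supports are disjoint, so the whole tensor product $\oV\ot\fO_0$ is the second summand in \eqref{eq:first decomposition}; hence, by Remark~\ref{obs:description submodule gen by a submodule 1}, it suffices to check that $\yij{12}\fo_0=0$, which the Appendix gives. Thus $\oV\fO_0=0$ and, by \eqref{eq:highest module}, $\D\fO_0=\BV(V)\fO_0$ is a highest-weight module of weight $(\tau,0)$. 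It remains to compute $\BV(V)\fO_0$ degree by degree. Using $M(\sigma,-)\ot M(\tau,0)\simeq M(\sigma,+)\oplus M(\sigma,-)$ and the $(\sigma,\pm)$-combinations of \eqref{eq:sigma tensor tau}, one applies the action map to the two simple summands of $V\ot\fO_0\subseteq\fM^{-2}$: I expect one of them to map onto a nonzero multiple of $\fp$ --- whence $\fP\simeq M(\sigma,+)$, which is exactly part~(iii) --- and the other to land inside $\fR$ (this last summand is inessential for what follows, since $\fR\subseteq\fS(\sigma,-)$ is contained in \emph{every} nonzero $\D$-submodule, $\fS(\sigma,-)$ being $\D$-simple by Theorem~\ref{teo:socle simple}). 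Next, $V\fP\subseteq\fM^{-3}$; decomposing $V\ot\fP\simeq M(e,-)\oplus M(e,\rho)\oplus\bigoplus_{\ell}M(\tau,\ell)$ by the fusion rules of \S\ref{subsub:fusion rules}, and applying the action map, I expect the $M(e,\rho)$-summand --- realized via \eqref{eq:rho en sigma sigma} --- to map isomorphically onto $\fU$, the $M(e,-)$ and $M(\tau,0)$ summands to be killed, and the $M(\tau,1),M(\tau,2)$ summands to land in $\fN_1\oplus\fN_2\subseteq\fS(\sigma,-)$ (use \eqref{eq:tau en sigma sigma} and Lemma~\ref{descomposicion de BV as DS3mod}). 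Finally $V\fU\subseteq\fM^{-4}=\fMsoc$ and $V\fMsoc=0$. Summing up, $\BV(V)\fO_0\subseteq\fO_0\oplus\fP\oplus\fR\oplus\fU\oplus\fN_1\oplus\fN_2\oplus\fMsoc$, and together with $\fS(\sigma,-)\subseteq\D\fO_0$ this forces $\D\fO_0=\fO\fP\fU$, proving part~(i).

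For part (ii), the inclusions $\D\fU\subseteq\D\fO_0$ and $\D\fP\subseteq\D\fO_0$ are immediate from part~(i), since $\fU,\fP\subseteq\fO\fP\fU=\D\fO_0$. For the reverse it suffices to climb back up to $\fO_0$ by acting with $\BV(\oV)$: I would exhibit a single generator $\yij{ij}$ sending $\fp$ to a nonzero scalar multiple of $\fo_0$, and likewise a short monomial in the generators of $\BV(\oV)$ sending $\fu$ to a nonzero scalar multiple of $\fo_0$ --- the analogue of the identity $\tfrac12\yij{12}\fq=\tfrac13\yij{12}\yij{13}\fn_0\in\fV$ used in the proof of Lemma~\ref{le: VQN} --- again from the Appendix. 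Then $\D\fO_0\subseteq\D\fP$ and $\D\fO_0\subseteq\D\fU$, so all three submodules coincide.

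The only real difficulty is computational, exactly as in Lemma~\ref{le: VQN}: one must evaluate the $\yij{12}$-action tabulated in the Appendix on the handful of basis elements above and keep track of the scalars, and --- the genuinely delicate point --- one must verify that the relevant action maps do \emph{not} vanish unexpectedly, namely that $\fp\in V\fO_0$, that $\fu\in V\fP$, and that the climbing maps of part~(ii) are nonzero. Each of these is reduced by Remark~\ref{obs:description submodule gen by a submodule 1} to checking a single representative, while the degree-and-weight bookkeeping (which simple $\DSn$-summand can occur in a given $\fM^{n}$) restricts in advance which summands can survive.
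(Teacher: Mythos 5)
Your proposal follows the paper's proof essentially step for step: establish $\oV\fO_0=0$ via the support/degree argument and one Appendix check of $\yij{12}\fo_0$, then run the algorithm of Remark~\ref{obs:description submodule gen by a submodule 2} downward ($V\fO_0\to\fR\oplus\fP$, $V\fP\to\fU\oplus\fN_1\oplus\fN_2$, $V\fU\subseteq\fMsoc$), and finally climb back up to $\fO_0$ for part~(ii). The one imprecision is in part~(ii): a single generator $\yij{ij}$ applied to $\fp$ does not by itself yield a multiple of $\fo_0$; one must also apply the projection onto the $(\tau,0)$-isotypic component using group elements, as the paper does with $(1+\tau+\tau^2)\yij{12}(13)\fp=2(1-\zeta)\fo_0$ (and the analogous chain $(1+\tau+\tau^2)\yij{12}(13)(1+\sigma)\yij{12}\fu$). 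Since you explicitly frame this as the analogue of the identity used in Lemma~\ref{le: VQN}, the intent is correct; just be aware that the "monomial" necessarily involves $\D(\Sn_3)$-elements, not only generators of $\BV(\oV)$.
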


\begin{proof}
We have that $\D\fO_0$ is a highest-weight module, since $\yij{12}\fo_0=0$ and Remark \ref{obs:description submodule gen by a submodule 1} provides $\oV\fO_0=0$. Then we compute $\D\fO_0=\BV(V)\fO_0$.

By \eqref{eq:sigma tensor tau}, $V\ot\fO_0\simeq M(\sigma,+)\oplus M(\sigma,-)$ where $(1\pm\sigma)\xij{13}\ot\fo_0$ belongs to the submodule of weight $(\sigma,\pm)$. The action map gives
$$
(1+\sigma)\xij{13}\fo_0=-\fp\quad\mbox{and}\quad(1-\sigma)\xij{13}\fo_0=(13)\fr
$$
and we obtain $V\fO_0=\fR\oplus\fP$. In particular, $\fP\simeq M(\sigma,+)$ which proves \eqref{le: VQN iii}.

Now, we calculate $V\fP$. As $\D(\Sn_3)$-modules, $V\ot\fP\simeq M(e,-)\oplus M(e,\rho)\oplus\bigoplus_{\ell=0,1,2}M(\tau,\ell)$, cf. \S\ref{subsub:fusion rules}. Using \eqref{eq:tau en sigma sigma} we see that the action map on the components of weight $(e,-)$ and $(\tau,0)$ is zero since $\xij{12}\fp+\xij{13}(23)\fp+\xij{23}(13)\fp=0$ and $(1+\tau+\tau^{-1})\xij{12}(13)\fp=0$. Meanwhile, from \eqref{eq:rho en sigma sigma} and \eqref{eq:tau en sigma sigma}, the action map on the components of weight $(\tau,1)$, $(\tau,2)$ and $(e,\rho)$ is not zero. In fact, for  $\ell=1,2$:
\begin{align*}
(\zeta^\ell+\zeta^{-\ell}\tau^{-1}+\tau)\xij{12}(13)\fp&=(\zeta^\ell+\zeta^{-\ell}\tau^{-1}+\tau)\left(\xij{12}\xij{13}\xij{23}\mm{12}+\xij{12}\xij{13}\xij{12}\mm{13}\right)\\
&=(\zeta^{\ell}-1)\xij{12}\xij{13}\xij{23}\mm{12}+(\zeta^{\ell}-\zeta^{-\ell})\xij{12}\xij{13}\xij{12}\mm{13}+(1-\zeta^{-\ell})\xij{13}\xij{12}\xij{23}\mm{23}.
\end{align*}
For the component $(e,\rho)$ we have that $\xij{12}\fp+\zeta\xij{13}(23)\fp+\zeta^{-1}\xij{23}(13)\fp=2(\zeta-\zeta^{-1})\fu$.
Hence $V\fP=\fU\oplus\fN_1\oplus\fN_2$. Since $V\fN_\ell\subseteq\fMsoc$, $V\fU\subseteq\fMsoc$ and $\fR\subset\fS(\sigma,-)$, we conclude that $\D\fO_0=\fO\fP\fU$ and \eqref{le: UPO i} follows.

For the proof of \eqref{le: UPO ii} we note that $\fO_0\subset\D\fP$ and $\fO_0\subset\D\fU$ since
$$
(1+\tau+\tau^2)\yij{12}(13)\fp=\zeta(1+\tau+\tau^2)\yij{12}(13)(1+\sigma)\yij{12}\fu=2(1-\zeta)\fo_0\in\fO_0.
$$
\end{proof}

The next result is a direct consequence of the above lemma.
\begin{Cor}\label{cor: l tau cero cociente de UPO}
The quotient $\fO\fP\fU/\fS(\sigma,-)$ is a simple highest-weight module of weight $(\tau,0)$. Therefore
$$\fL(\tau,0)\simeq M(\tau,0)\oplus M(\sigma,+)\oplus M(e,\rho)$$
as $\DSn$-modules. \qed
\end{Cor}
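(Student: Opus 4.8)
The plan is to derive the corollary entirely from Lemma~\ref{le: UPO}, so that the argument becomes pure bookkeeping. First I would record the $\DSn$-module picture provided by that lemma: $\fO\fP\fU=\fS(\sigma,-)\oplus\fU\oplus\fP\oplus\fO_0$ as $\DSn$-modules, with $\fU\simeq M(e,\rho)$, $\fP\simeq M(\sigma,+)$ and $\fO_0\simeq M(\tau,0)$. Hence the quotient $Q:=\fO\fP\fU/\fS(\sigma,-)$ is isomorphic, as a $\DSn$-module, to $M(\tau,0)\oplus M(\sigma,+)\oplus M(e,\rho)$, which is \emph{multiplicity-free} and has pairwise non-isomorphic simple summands. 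Since $\D\fO_0=\fO\fP\fU$ and $\oV\fO_0=0$ (both shown in the proof of Lemma~\ref{le: UPO}), the quotient $Q$ is generated over $\D$ by the image $\bar\fO_0\simeq M(\tau,0)$ of $\fO_0$, and $\oV\bar\fO_0=0$; thus $Q$ is a highest-weight module of weight $(\tau,0)$.

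Next I would prove that $Q$ is simple. Let $N$ be a nonzero $\D$-submodule of $Q$. Regarded as a $\DSn$-module, $N$ is a submodule of the semisimple, multiplicity-free module $Q$, so $N$ is the direct sum of a nonempty subset of the three isotypic components; in particular $N$ contains one of $\bar\fU$, $\bar\fP$, $\bar\fO_0$ in full. By Lemma~\ref{le: UPO} we have $\D\fU=\D\fP=\D\fO_0=\fO\fP\fU$, hence each of $\bar\fU$, $\bar\fP$, $\bar\fO_0$ generates $Q$ as a $\D$-module; therefore $N=Q$, and $Q$ is simple.

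Finally, being a simple highest-weight module of weight $(\tau,0)$, $Q$ coincides with $\fL(\tau,0)$ by Theorem~\ref{teo:bi con L}(i) (uniqueness of the simple highest-weight module of a given weight), and the displayed decomposition $\fL(\tau,0)\simeq M(\tau,0)\oplus M(\sigma,+)\oplus M(e,\rho)$ of $\DSn$-modules is exactly the one established in the first step.

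There is essentially no obstacle remaining once Lemma~\ref{le: UPO} is in hand: the only delicate point is the observation that $Q$ is multiplicity-free as a $\DSn$-module, which is what forces a nonzero $\D$-submodule to swallow one of the generating isotypic pieces $\fU$, $\fP$, $\fO_0$. The substantive work---identifying the $\DSn$-types of these pieces and showing that each of them generates $\fO\fP\fU$---has already been carried out in the lemma, so this corollary is indeed, as stated, a direct consequence.
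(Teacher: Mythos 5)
Your argument is correct, and it fills in exactly the bookkeeping that the paper suppresses with its ``direct consequence'' remark: you use Lemma~\ref{le: UPO} to identify the $\DSn$-types (so that $Q=\fO\fP\fU/\fS(\sigma,-)$ is multiplicity-free), note that $\D\fU=\D\fP=\D\fO_0=\fO\fP\fU$ so each isotypic piece generates $Q$, and conclude simplicity since any nonzero $\D$-submodule of a multiplicity-free semisimple $\DSn$-module must contain one of those pieces in full. This is the paper's intended route, just spelled out.
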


\begin{Cor}\label{cor: l tau cero s e rho}
As $\D$-modules, $\fL(\tau,0)\simeq\fS(e,\rho)$ and $\fL(e,\rho)\simeq\fS(\tau,0)$.
\end{Cor}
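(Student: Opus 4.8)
The plan is to exhibit $\fL(\tau,0)$ and $\fL(e,\rho)$ as \emph{lowest}-weight modules and then to invoke the uniqueness in Theorem \ref{teo:bi con socle}(i). First I would record the normalization that makes the two halves symmetric: by Lemma \ref{descomposicion de BV as DS3mod}\eqref{item: 0 4} the space $\BV^{n_{top}}(V)=\BV^4(V)\simeq M(e,+)$ is the \emph{trivial} $\DSn$-module, so $\BV^{n_{top}}(V)\ot M(g,\varrho)\simeq M(g,\varrho)$ for every weight $(g,\varrho)$, i.e. $M(\hat g,\hat\varrho)\simeq M(g,\varrho)$. Thus a ``simple lowest-weight module of weight $M(\hat g,\hat\varrho)$'' in Theorem \ref{teo:bi con socle} is simply a simple lowest-weight module of weight $(g,\varrho)$.

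Next I would look inside $\fL(\tau,0)$. By Corollary \ref{cor: l tau cero cociente de UPO} one has $\fL(\tau,0)\simeq\fO\fP\fU/\fS(\sigma,-)$ as $\D$-modules, and $\fS(\sigma,-)$, being the socle, is a $\D$-submodule. In the $\DSn$-decomposition $\fO\fP\fU=\fS(\sigma,-)\oplus\fU\oplus\fP\oplus\fO_0$ the summand $\fU$ meets $\fS(\sigma,-)$ trivially, so its image $\bar\fU$ in the quotient is a nonzero $\DSn$-submodule isomorphic to $\fU$, which is of weight $(e,\rho)$. Lemma \ref{le: UPO} gives $V\fU\subseteq\fMsoc\subseteq\fS(\sigma,-)$ (using Lemma \ref{le:socle sigma menos}), whence $V\bar\fU=0$; and since $\fL(\tau,0)$ is simple it is generated as a $\D$-module by $\bar\fU$. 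Therefore $\fL(\tau,0)$ is a simple lowest-weight $\D$-module of weight $(e,\rho)$, so Theorem \ref{teo:bi con socle}(i) (with the identification of the previous paragraph) forces $\fL(\tau,0)\simeq\fS(e,\rho)$.

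The second isomorphism is the mirror image. By Corollary \ref{cor: l e rho cociente de VQN}, $\fL(e,\rho)\simeq\fV\fQ\fN/\fS(\sigma,-)$, and by Lemma \ref{le: VQN}\eqref{le: VQN ii} one has $\fV\fQ\fN=\D\fN_0$, with $\fN_0$ of weight $(\tau,0)$. The image $\bar\fN_0$ in the quotient is then a nonzero $\DSn$-submodule of weight $(\tau,0)$ generating $\fL(e,\rho)$, and $V\bar\fN_0=0$ because $V\fN_0\subseteq\fMsoc\subseteq\fS(\sigma,-)$ was established in the proof of Lemma \ref{le: VQN}. Hence $\fL(e,\rho)$ is a simple lowest-weight module of weight $(\tau,0)$, and Theorem \ref{teo:bi con socle}(i) yields $\fL(e,\rho)\simeq\fS(\tau,0)$.

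A word on where the work lies: there is no genuine obstacle once Lemmas \ref{le: VQN} and \ref{le: UPO} are available, the argument being purely a bookkeeping of weights. The one point to get right is that $\BV^{n_{top}}(V)$ is the trivial $\DSn$-module in this example, so that the ``top-shifted'' weight $M(\hat g,\hat\varrho)$ of Theorem \ref{teo:bi con socle} coincides with $M(g,\varrho)$; the generation of the simple quotient by $\bar\fU$ (resp.\ $\bar\fN_0$) is automatic from simplicity. One may moreover feed these isomorphisms into Theorem \ref{teo:L dual S} to recover $\bigl(\fL(\tau,0)\bigr)^{*}\simeq\fL(e,\rho)$ and $\bigl(\fL(e,\rho)\bigr)^{*}\simeq\fL(\tau,0)$, but that is a separate statement.
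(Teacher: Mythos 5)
Your proof is correct and follows essentially the same route as the paper: identify $\fU$ (resp.\ $\fN_0$) as a lowest-weight subquotient of the simple head $\fO\fP\fU/\fS(\sigma,-)$ (resp.\ $\fV\fQ\fN/\fS(\sigma,-)$) and invoke the uniqueness in Theorem \ref{teo:bi con socle}. The only difference is cosmetic: the paper extracts $V\fU\subset\fMsoc$ directly from the degree observation $\fU\subset\fM^{-3}(\sigma,-)$ rather than citing Lemma \ref{le: UPO}, and it leaves the triviality of $\BV^{n_{top}}(V)$ (hence $M(\hat g,\hat\varrho)\simeq M(g,\varrho)$) implicit, which you spell out correctly via Lemma \ref{descomposicion de BV as DS3mod}\eqref{item: 0 4}.
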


\begin{proof}
Since $\fU\subset\fM^{-3}(\sigma,-)$, $V\fU\subset\fMsoc$. Hence $\fU$ is a lowest-weight in the quotient $\fO\fP\fU/\fS(\sigma,-)$. Therefore this quotient is isomorphic to $\fS(e,\rho)$ by Theorem \ref{teo:bi con socle}. The proof of the second isomorphism is similar.
\end{proof}

The $\D(\Sn_3)$-submodules of $\fM(\sigma,-)$ of weight $(e,+)$ are
\begin{itemize}
\item $\ft_{\lambda,\mu}=\ku\bigl(\lambda\mm{e}^1+\mu\mm{e}^3\bigr)$ where $\lambda,\mu\in\ku$,
\begin{align*}
\mm{e}^1=&\xij{12}\mm{12}+\xij{23}\mm{23}+\xij{13}\mm{13}\quad\mbox{and}\quad
\mm{e}^3=\xij{13}\xij{12}\xij{23}\mm{12}-\xij{12}\xij{13}\xij{12}\mm{23}+\xij{12}\xij{13}\xij{23}\mm{13}.
\end{align*}
\end{itemize}

\begin{lema}\label{le:T}
Let $\fT_{\lambda,\mu}=\ft_{\lambda,\mu}\oplus\fS(\sigma,-)$ for any $\lambda,\mu\in\ku$. Then $\D\ft_{\lambda,\mu}=\fT_{\lambda,\mu}$. In particular, $\fT_{1,0}$ is a highest-weight submodule of weight $(e,+)$.
\end{lema}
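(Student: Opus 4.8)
The plan is to establish $\D\ft_{\lambda,\mu}=\ft_{\lambda,\mu}\oplus\fS(\sigma,-)$ by proving three things: that $\fS(\sigma,-)\subseteq\D\ft_{\lambda,\mu}$, that $\D\ft_{\lambda,\mu}\subseteq\ft_{\lambda,\mu}+\fS(\sigma,-)$, and that this sum is direct. (We may assume $(\lambda,\mu)\neq(0,0)$, so that $v:=\lambda\mm{e}^1+\mu\mm{e}^3\neq0$.) The first inclusion is immediate: by Theorem \ref{teo:socle simple} the socle $\fS(\sigma,-)$ of $\fM(\sigma,-)$ is simple, hence it is the unique minimal nonzero $\D$-submodule and so lies in every nonzero $\D$-submodule. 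Directness follows from the $\Z$-grading: by Lemma \ref{le:socle sigma menos}, $\fS(\sigma,-)$ is a graded submodule concentrated in degrees $-2,-3,-4$, whose homogeneous components have weights $(\sigma,-)$, $(\tau,1)\oplus(\tau,2)$ and $(\sigma,-)$ respectively, while $\ft_{\lambda,\mu}$ is spanned by $v$, whose homogeneous components (degrees $-1$ and $-3$) are $\DSn$-submodules of weight $(e,+)$; comparing homogeneous components of any element of $\ft_{\lambda,\mu}\cap\fS(\sigma,-)$ and using that $(e,+)$ is not among those weights and that $\fS(\sigma,-)$ vanishes in degree $-1$ forces $\lambda=\mu=0$.

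For the inclusion $\D\ft_{\lambda,\mu}\subseteq\ft_{\lambda,\mu}+\fS(\sigma,-)$ I would pass to $\overline{\fM}:=\fM(\sigma,-)/\fS(\sigma,-)$. Since $\D$ is generated as an algebra by $\DSn$, $V$ and $\oV$, and since $\ft_{\lambda,\mu}$ and $\fS(\sigma,-)$ overlap in no degree, it suffices to show $V\ft_{\lambda,\mu}\subseteq\fS(\sigma,-)$ and $\oV\ft_{\lambda,\mu}\subseteq\fS(\sigma,-)$ (the $\DSn$-stability of the line $\ku v$ being clear). Because the action maps $V\ot\ft_{\lambda,\mu}\to\fM(\sigma,-)$ and $\oV\ot\ft_{\lambda,\mu}\to\fM(\sigma,-)$ are morphisms of $\DSn$-modules and, $\ft_{\lambda,\mu}$ being the trivial $\DSn$-module, $V\ot\ft_{\lambda,\mu}\simeq\oV\ot\ft_{\lambda,\mu}\simeq M(\sigma,-)$ is simple, each of $V\ft_{\lambda,\mu}$ and $\oV\ft_{\lambda,\mu}$ is either $0$ or simple of weight $(\sigma,-)$; hence, by Remark \ref{obs:description submodule gen by a submodule 1}, it is enough to evaluate the single generators $\xij{12}$ and $\yij{12}$ on $v$.

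For $V$: a routine computation using the relations \eqref{eq:rel of BV} gives $\xij{12}\mm{e}^1=\xij{12}\xij{23}\mm{23}+\xij{12}\xij{13}\mm{13}=-(13)\cdot\fr$, which lies in the $\DSn$-submodule $\fR\subseteq\fS(\sigma,-)$, while $\xij{12}\mm{e}^3$ lies in $\fM^{-4}(\sigma,-)=\fMsoc\subseteq\fS(\sigma,-)$ for degree reasons; thus $\xij{12}v\in\fS(\sigma,-)$ and $V\ft_{\lambda,\mu}\subseteq\fS(\sigma,-)$. For $\oV$: using \eqref{eq:de los ij iguales}, \eqref{eq:de los ij no iguales}, the equality $\oV\fM^{0}(\sigma,-)=0$ (cf. \eqref{eq:verma G y N graded}), and the $\DSn$-action on $M(\sigma,-)$, one checks the short identity $\yij{12}\mm{e}^1=0$, so that $\yij{12}v=\mu\,\yij{12}\mm{e}^3\in\fM^{-2}(\sigma,-)$. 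The only genuinely laborious step --- and the main obstacle --- is to compute $\yij{12}\mm{e}^3$ from the explicit formulas for the action of $\yij{12}$ gathered in the Appendix and to verify that it belongs to $\fR$; granting this, $\oV\ft_{\lambda,\mu}\subseteq\fR\subseteq\fS(\sigma,-)$, and the three parts together give $\D\ft_{\lambda,\mu}=\ft_{\lambda,\mu}\oplus\fS(\sigma,-)=\fT_{\lambda,\mu}$.

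Finally, for the last assertion I take $(\lambda,\mu)=(1,0)$: then $\fT_{1,0}=\D\mm{e}^1$ is generated by the $\DSn$-submodule $\ku\mm{e}^1$, which has weight $(e,+)$, and $\oV\mm{e}^1=0$ by the computation above; hence $\fT_{1,0}$ is a highest-weight module of weight $(e,+)$ by Definition \ref{def:highest modules}. Apart from the evaluation of $\yij{12}\mm{e}^3$, all the remaining steps are bookkeeping with the $\Z$-grading, with the weight decomposition of $\fS(\sigma,-)$ from Lemma \ref{le:socle sigma menos}, and with Schur's lemma.
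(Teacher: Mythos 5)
Your plan coincides with the paper's proof: one shows $V\fT_{\lambda,\mu},\,\oV\fT_{\lambda,\mu}\subseteq\fS(\sigma,-)$ by evaluating the single generators $\xij{12}$ and $\yij{12}$ on $\mm{e}^1$ and $\mm{e}^3$ (Remark \ref{obs:description submodule gen by a submodule 1} justifying the reduction to one generator), whence $\fT_{\lambda,\mu}$ is a $\D$-submodule and the simplicity of the socle forces the reverse inclusion. Your extra scaffolding—the explicit reverse inclusion via the unique minimal socle, and the directness argument by comparing degrees and weights—is correct but essentially a repetition of what is already built into the definition $\fT_{\lambda,\mu}=\ft_{\lambda,\mu}\oplus\fS(\sigma,-)$ and into Theorem \ref{teo:socle simple}; the paper leaves it implicit.

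The one step you flag as ``the main obstacle'' and do not carry out is the only actual content: it is precisely what the paper's proof records, namely $\yij{12}\mm{e}^3=-(13)\fr\in\fR$. The verification is routine from Lists 3--5 of the Appendix (taking the $-$ sign): $\yij{12}(\xij{13}\xij{12}\xij{23}\mm{12})=0$, $\yij{12}(\xij{12}\xij{13}\xij{12}\mm{23})=\xij{13}\xij{12}\mm{23}+\xij{13}\xij{23}\mm{13}$ and $\yij{12}(\xij{12}\xij{13}\xij{23}\mm{13})=(\xij{13}\xij{23}+\xij{12}\xij{13})\mm{13}+(\xij{13}\xij{12}+\xij{12}\xij{23})\mm{23}$, which combine with the quadratic relations of $\BV(V)$ to give $\xij{12}\xij{13}\mm{13}+\xij{12}\xij{23}\mm{23}=-(13)\fr$. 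Since you already checked $\xij{12}\mm{e}^1=-(13)\fr$, the two nontrivial evaluations agree, and the proof closes as you outline. So: same route as the paper, with one explicitly acknowledged computation left to the reader; supplying it makes the proof complete.
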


\begin{proof}
We see that $\yij{12}\mm{e}^1=0$ and $\yij{12}\mm{e}^3=-(13)\fr\in\fR$. Meanwhile, $\xij{12}\mm{e}^1=-(13)\fr$ and clearly, $\xij{12}\mm{e}^3\in\fMsoc$. Then $V\fT_{\lambda,\mu},\oV\fT_{\lambda,\mu}\subset\fS(\sigma,-)$ and the lemma follows.
\end{proof}

\begin{Cor}\label{cor: l e mas cociente de T}
The quotient $\fT_{1,0}/\fS(\sigma,-)$ is a simple highest-weight module of weight $(e,+)$. Therefore
$$\fL(e,+)\simeq M(e,+)$$
as $\DSn$-modules. Moreover, $\fL(e,+)\simeq\fS(e,+)$ as $\D$-modules.
\end{Cor}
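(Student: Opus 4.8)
The plan is to read the statement off from Lemma~\ref{le:T} together with the general characterisations of the simple modules in Theorems~\ref{teo:bi con L} and~\ref{teo:bi con socle}, so that essentially no new computation is required. First I would record that, by Lemma~\ref{le:T}, $\fT_{1,0}=\ft_{1,0}\oplus\fS(\sigma,-)$ as $\DSn$-modules and $\fT_{1,0}=\D\,\ft_{1,0}$; since $\fS(\sigma,-)$ is simple by Lemma~\ref{le:socle sigma menos}, the quotient $\fT_{1,0}/\fS(\sigma,-)$ is isomorphic, as a $\DSn$-module, to $\ft_{1,0}\simeq M(e,+)$. In particular this quotient is one-dimensional, hence simple as a $\D$-module.

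Next I would check that $\fT_{1,0}/\fS(\sigma,-)$ is a highest-weight module of weight $(e,+)$. It is generated over $\D$ by the image of the $\DSn$-submodule $\ft_{1,0}$, which has weight $(e,+)$, and the proof of Lemma~\ref{le:T} already shows $\oV\,\ft_{1,0}\subseteq\fS(\sigma,-)$ (indeed $\yij{12}\mm{e}^1=0$, and the remaining generators of $\BV(\oV)$ act by conjugation, so they annihilate $\mm{e}^1$ as well), whence $\oV$ kills this image in the quotient. Since by Theorem~\ref{teo:bi con L}(i) $\fL(e,+)$ is the \emph{unique} simple highest-weight module of weight $(e,+)$, we conclude $\fT_{1,0}/\fS(\sigma,-)\simeq\fL(e,+)$; this gives $\fL(e,+)\simeq M(e,+)$ as $\DSn$-modules, and in particular $\dim\fL(e,+)=1$.

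For the last assertion I would use that a one-dimensional $\D$-module is automatically a lowest-weight module: since $\fL(e,+)$ is concentrated in a single $\Z$-degree, $V\,\fL(e,+)=0$, so $\fL(e,+)$ is a simple lowest-weight module of weight $(e,+)$. On the other hand $M(e,+)$ is the unit object of ${}_{\DSn}\cM$, so by Lemma~\ref{descomposicion de BV as DS3mod}(i) the module $M(\hat e,\hat{+})$ of Theorem~\ref{teo:bi con socle} satisfies $M(\hat e,\hat{+})\simeq\BV^4(V)\ot M(e,+)\simeq\BV^4(V)\simeq M(e,+)$. Hence $\fS(e,+)$ is the unique simple lowest-weight module of weight $M(\hat e,\hat{+})\simeq M(e,+)$, and comparing with the previous sentence yields $\fL(e,+)\simeq\fS(e,+)$ as $\D$-modules; alternatively one may invoke Theorem~\ref{teo:L dual S} together with the self-duality of the one-dimensional module $\fL(e,+)$. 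I do not expect any real obstacle here: the only mildly delicate point is the bookkeeping identification $M(\hat e,\hat{+})\simeq M(e,+)$, which is immediate from $\BV^4(V)$ being the trivial $\DSn$-module, everything else being a direct application of Lemmas~\ref{le:T} and~\ref{le:socle sigma menos} and Theorems~\ref{teo:bi con L}--\ref{teo:bi con socle}.
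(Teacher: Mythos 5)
Your proposal is correct and follows essentially the same route as the paper: Lemma~\ref{le:T} gives that $\fT_{1,0}/\fS(\sigma,-)$ is a one-dimensional (hence simple) highest-weight module of weight $(e,+)$, Theorem~\ref{teo:bi con L} identifies it with $\fL(e,+)$, and the last assertion is obtained exactly as in the paper by noting that a one-dimensional module is automatically a lowest-weight module and that $\BV^{4}(V)\simeq M(e,+)$, so Theorem~\ref{teo:bi con socle} forces $\fL(e,+)\simeq\fS(e,+)$. Your extra details (the use of \eqref{eq:con uno es suficiente} to kill the action of all of $\oV$ on $\mm{e}^1$, and the identification $M(\hat e,\hat{+})\simeq M(e,+)$) are just explicit versions of steps the paper leaves implicit.
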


\begin{proof}
The first part follows as the above corollaries. In particular, $\fL(e,+)$ is one-dimensional and then it is also a lowest-weight module. Hence $\fL(e,+)\simeq\fS(e,+)$ holds.
\end{proof}

\begin{proof}[Proof of Theorem \ref{teo:el latice de sigma menos}]
Let $N$ be a $\D$-submodule of $\fM(\sigma,-)$. Then $N=\oplus_t S_t$ where $S_t$ is $\DSn$-simple. Since $N=\sum_t\D S_t$, it is enough to compute the $\D$-submodule generated by $S_t$ case-by-case according to the weight of $S_t$. Recall the weights of $\fM(\sigma,-)$ from Figure \ref{fig:sigma menos}.

\begin{enumerate}[(C{a}se 1)]
 \item If $S_t$ is of weight $(e,+)$, then $\D S_t=\fT_{\lambda,\mu}$ for some $\lambda,\mu\in\ku$ by Lemma \ref{le:T}.
 \item If $S_t$ is of weight $(e,\rho)$, then there is an element $\fa+\fb\in S_t$ with $\fa\in\fU$ and $\fb\in\fV$. Assume $\fa\neq0\neq\fb$, otherwise $\D S_t$ is either $\fU\fP \fO$ or $\fV\fQ\fN$ by Lemmas \ref{le: VQN} and \ref{le: UPO}. Then $\oV S_t=\oV\fU=\fR\oplus\fP$ because $\oV\fV=0$, and hence $\D\fP=\fO\fP\fU\subset\D S_t$. Thus $(\fa+\fb)-\fa=\fb\in\D S_t$ and therefore $\D S_t=\fO\fP\fU+\fV\fQ\fN$.	
 \item If $S_t$ is of weight $(\tau,0)$ or $(\sigma,+)$. Proceeding as above, we can see that $\D S_t\subseteq\fO\fP\fU+\fV\fQ\fN$.
 \item If $S_t$ is of weight $(\tau,\ell)$ with $\ell\neq0$, then there is an element $\fc+\fd\in S_t$ with $\fc\in\fN_\ell$ and $\fd\in\fO_\ell$. Moreover, $\fd\in\D S_t$ as $\fN_\ell\subseteq\fS(\sigma,-)\subseteq\D S_t$. Then either $\D S_t=\fS(\sigma,-)$, if $\fd=0$, or $\D S_t=\fM(\sigma,-)$ because $\yij{12}\fo_\ell=(\zeta^{-\ell}-\zeta^\ell)\mm{23}\in\D S_t$.
 \item If $S_t$ is of weight $(\sigma,-)$, then either $\D S_t=\fS(\sigma,-)$ or there is $0\neq\fy+\ft\in S_t$ with $\fy\in\fM^0( \sigma,-)$ and $\ft\in\fM^{-2}(\sigma,-)\setminus\fS(\sigma,-)$. If $\fy+\ft\neq0$, then $\D S_t=\fM(\sigma,-)$ by Lemma \ref{le:si esta 1 mas algo entonces es todo el verma} when $\fy\neq0$ or by noting that $\oV\ft=\fM^0( \sigma,-)$ when $\ft\neq0$.
\end{enumerate}
\end{proof}

\subsection{The Verma module \texorpdfstring{$\fM{(e,+)}$}{Merho}}

\begin{thm}\label{teo:el latice de e mas}
The proper $\D$-submodules of $\fM{(e,+)}$ are  $\fS{(e,+)}\subset\fX{(e,+)}$ where
\begin{enumerate}[(i)]
\item\label{teo: e mas max}  $\fX(e,+)=\oplus_{n<0}\fM^n(e,+)$ is a highest-weight submodule of weight $(\sigma,-)$.
\smallskip
\item\label{teo: e mas soc} $\fS{(e,+)}=\fMsoc$ is a highest-weight submodule of weight $(e,+)$.
\end{enumerate}
Therefore $\bigl\{\fL(e,+),\fL(\sigma,-),\fL(e,+)\bigr\}$ are the composition factors of $\fM(e,+)$.
\end{thm}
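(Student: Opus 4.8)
I want to prove that $\fM(e,+)$ has exactly the two proper submodules $\fS(e,+)\subset\fX(e,+)$, with $\fX(e,+)=\oplus_{n<0}\fM^n(e,+)$ of weight $(\sigma,-)$ and $\fS(e,+)=\fMsoc$ of weight $(e,+)$. First I would record the weight structure of $\fM(e,+)=\BV(V)\ot M(e,+)$ as a $\DSn$-module: since $M(e,+)$ is one-dimensional of $\Sn_3$-degree $e$, Lemma \ref{descomposicion de BV as DS3mod} gives $\fM^0\simeq M(e,+)$, $\fM^{-1}\simeq M(\sigma,-)$, $\fM^{-2}\simeq M(\tau,1)\oplus M(\tau,2)$, $\fM^{-3}\simeq M(\sigma,-)$, $\fM^{-4}=\fMsoc\simeq M(e,+)$. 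Thus every homogeneous slice is already $\DSn$-simple except for degree $-2$, which is a direct sum of two non-isomorphic simples.

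\textbf{Computing the socle and the maximal submodule.} For (ii), by Theorem \ref{teo:socle simple} the socle equals $\BV(\oV)\fMsoc$; I would run the algorithm of Remark \ref{obs:description submodule gen by a submodule 2}: using the appendix computations one checks $\yij{12}(\xtop\mm{e})=0$, and since $\oV\ot\fMsoc$ decomposes with a basis on which $\Sn_3$ acts transitively (Remark \ref{obs:description submodule gen by a submodule 1}, via \eqref{eq:first decomposition}), the action map $\oV\ot\fMsoc\to\fM^{-3}$ vanishes identically. Hence $\fS(e,+)=\fMsoc\simeq M(e,+)$, proving (ii); it is a highest-weight submodule because $\oV$ kills it trivially (it sits in the lowest degree). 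For (i), $\fX(e,+)$ is homogeneous by Theorem \ref{teo:unico submod max}, so $\fX(e,+)=\oplus_{n\le n_0}\fM^n$ for some $n_0<0$; since $\fM/\fX(e,+)=\fL(e,+)$ must be the simple highest-weight module of weight $(e,+)$, which by Corollary \ref{cor: l e mas cociente de T} is one-dimensional, we get $n_0=-1$, i.e.\ $\fX(e,+)=\oplus_{n<0}\fM^n$. That it is generated as a $\D$-module by $\fM^{-1}\simeq M(\sigma,-)$ with $\oV\fM^{-1}\subseteq\fM^0$ — wait, one must instead verify it is a \emph{highest-weight} module of weight $(\sigma,-)$: here I would show $\oV\fM^{-1}=0$ using the appendix formulas for $\yij{12}$ acting on $x\mm{e}$ with $x\in\B^1$, together with Remark \ref{obs:description submodule gen by a submodule 1}; then $\D\fM^{-1}=\BV(V)\fM^{-1}$, and since $\BV(V)$ is generated in degree $1$ with $V\fM^{-n}=\fM^{-n-1}$ by \eqref{eq:verma G y N graded}, this equals $\oplus_{n<0}\fM^n$.

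\textbf{Classifying all submodules.} Finally I would argue there are no other proper submodules. Let $N\subsetneq\fM(e,+)$ be a submodule; by Theorem \ref{teo:unico submod max}, $N\subseteq\fX(e,+)$, so $N=\oplus_t S_t$ with each $S_t$ a $\DSn$-simple living in degree $-1$, $-2$, $-3$ or $-4$. If some $S_t$ has weight $(\sigma,-)$ in degree $-1$, then $\D S_t\supseteq\BV(V)S_t=\fX(e,+)$, forcing $N=\fX(e,+)$. If $S_t$ has weight $(\tau,\ell)$ in degree $-2$, I would use the appendix formulas to exhibit an element of $V$ (or of $\oV$) sending a generator of $S_t$ nontrivially into $\fM^{-1}$ — explicitly $\yij{12}$ applied to the image of $\mm{\tau^{\pm1}}_\ell$ is a nonzero multiple of some $\mm{ij}\in\fM^{-1}$ — so again $\D S_t=\fX(e,+)$. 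If $S_t$ has weight $(\sigma,-)$ in degree $-3$, the analogous computation (now the degree-$3$ generator from Lemma \ref{descomposicion de BV as DS3mod}\eqref{item: 1 3}) shows $\oV S_t$ reaches $\fM^{-2}$, hence $\D S_t=\fX(e,+)$. The only remaining possibility is $S_t$ of weight $(e,+)$ in degree $-4$, i.e.\ $S_t=\fMsoc$, giving $N=\fS(e,+)$ (or $N\supseteq\fMsoc$ combined with one of the above, hence $N=\fX(e,+)$). This establishes that the only proper submodules are $\fS(e,+)$ and $\fX(e,+)$, and the composition factors are $\fL(e,+)$, $\fL(\sigma,-)$, $\fL(e,+)$.

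\textbf{Main obstacle.} The genuinely computational point is verifying that the action maps out of the top two degrees behave as claimed — namely $\oV\fM^{-1}=0$ and the non-vanishing of the maps $V\fM^{-2}\to\fM^{-3}$, $\oV\fM^{-2}\to\fM^{-1}$, $\oV\fM^{-3}\to\fM^{-2}$ — all of which reduce, via \eqref{eq:con uno es suficiente} and Remark \ref{obs:description submodule gen by a submodule 1}, to a handful of evaluations of $\yij{12}$ (and multiplication in $\BV(V)$) on explicit basis vectors from the appendix; the bookkeeping is routine but must be done carefully to be sure no submodule is missed.
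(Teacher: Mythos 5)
Parts (i) and (ii) of your argument are sound and close to the paper's: the identification $\fX(e,+)=\oplus_{n<0}\fM^n(e,+)$ from Corollary \ref{cor: l e mas cociente de T}, the verification $\oV\fM^{-1}(e,+)=0$ and $\oV\fMsoc=0$ from the appendix (the paper gets the first for free from homogeneity of $\fX(e,+)$, Theorem \ref{teo:unico submod max}, since $\oV\fM^{-1}\subseteq\fX\cap\fM^0=0$), and $\BV(V)\fM^{-1}=\oplus_{n<0}\fM^n$ all check out. Two small slips there: homogeneity alone does not give ``$\fX=\oplus_{n\le n_0}\fM^n$'' (a homogeneous submodule need not contain full consecutive slices), though your codimension-one conclusion is correct anyway; and ``it sits in the lowest degree'' is not by itself a reason that $\oV$ kills $\fMsoc$ (compare $\fM(\sigma,-)$, where $\oV\fMsoc=\fN_1\oplus\fN_2\neq0$) -- but you did verify the vanishing from the appendix, so this is harmless.

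The genuine gap is in the classification step: you write $N=\oplus_t S_t$ ``with each $S_t$ a $\DSn$-simple living in degree $-1$, $-2$, $-3$ or $-4$''. That homogeneity is not automatic and is essentially part of what must be proved: the weight $(\sigma,-)$ occurs in both $\fM^{-1}(e,+)$ and $\fM^{-3}(e,+)$, so a simple constituent of $N$ could a priori be a diagonal copy consisting of elements $\fa+\fb$ with $\fa\in\fM^{-1}$, $\fb\in\fM^{-3}$, both projections nonzero. The paper itself stresses that submodules need not be homogeneous (Lemma \ref{le:T}), and its proof of Theorem \ref{teo:el latice de sigma menos} treats exactly such mixed elements in its cases. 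Your case list omits this possibility, so as written it does not exclude a non-homogeneous proper submodule. The omission is repairable with your own tools: if the $\fM^{-3}$-projection of such an $S_t$ is nonzero, then since $\oV\fM^{-1}=0$ and $\oV\fM^{-3}\neq0$ one finds an element with $0\neq\oV(\fa+\fb)=\oV\fb\subseteq\fM^{-2}$ inside $\D S_t$, and your degree $-2$ argument takes over; if it is zero, $S_t\subseteq\fM^{-1}$ and you are in the first case. Note also that the paper sidesteps the whole case analysis: since $\fX(e,+)$ is a highest-weight module of weight $(\sigma,-)$, it has $\fL(\sigma,-)$ as a quotient, and the known $\DSn$-type $M(\sigma,-)\oplus M(\tau,1)\oplus M(\tau,2)\oplus M(\sigma,-)$ of $\fL(\sigma,-)$ (Theorem \ref{teo:el latice de sigma menos}) forces the kernel to be the one-dimensional $(e,+)$-isotypic component $\fMsoc$; a short weight count then shows $\fMsoc$ is the only nonzero proper submodule of $\fX(e,+)$, which is both cleaner and immune to the homogeneity issue.
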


\begin{figure}[h]
\begin{center}
\includegraphics{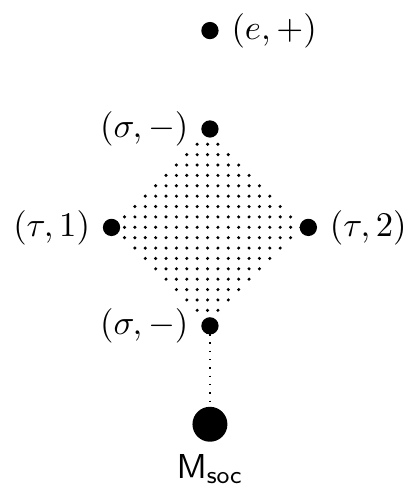}
\end{center}
\caption{Submodules of $\fM{(e,+)}$} \label{fig:e mas}
\end{figure}

In the Figure \ref{fig:e mas}, we have schemed the submodules of $\fM{(e,+)}$. The weights are represented by dots, notice that $\fM{(e,+)}\simeq\BV(V)$ as $\D(\Sn_3)$-module. The big dot in the bottom represents the socle $\fS(e,+)$. The dotted area represents the maximal $\D$-submodule $\fX(e,+)$ and hence the dot in the top corresponds to $\fL(e,+)$.

\begin{proof}
By Corollary \ref{cor: l e mas cociente de T}, $\fL(e,+)\simeq M(e,+)$ as $\D(\Sn_3)$-module and hence $\fX(e,+)=\oplus_{n<0}\fM^n(e,+)$. Since $\fX(e,+)$ is homogeneous by Theorem \ref{teo:unico submod max}, $\oV\fM^{-1}(e,+)=0$. Moreover, $\fM^{-1}(e,+)$ generates $\fX(e,+)$ because $\fM{(e,+)}\simeq\BV(V)$ as $\BV(V)$-modules. Therefore $\fX(e,+)$ is a highest-weight submodule of weight $(\sigma,-)$ and \eqref{teo: e mas max} follows.

By \eqref{teo: e mas max} and Theorem \ref{teo:bi con L}, $\fX(e,+)$ has a quotient isomorphic to $\fL(\sigma,-)$. By Theorem \ref{teo:el latice de sigma menos}, $\fL(\sigma,-)\simeq M(\sigma,-)\oplus M(\tau,1)\oplus M(\tau,2)\oplus M(\sigma,-)$ as $\D(\Sn_3)$-modules. Then, we deduce that the unique $\D$-submodule of $\fX(e,+)$ is $\fMsoc$ by inspection in the weights of the Verma module, see Figure \ref{fig:e mas}. Therefore $\fS{(e,+)}=\fMsoc$ and \eqref{teo: e mas soc} follows.

The last sentence of the statement is immediate.
\end{proof}

\subsection{The Verma module \texorpdfstring{$\fM{(\tau,0)}$}{Mtaucero}}

Let $\fJ\subset\fM^{-1}(\tau,0)$ and $\fG\subset\fM^{-2}(\tau,0)$ be the $\DSn$-submodules of weight $(\sigma,-)$ and $(e,\rho)$ with basis
\begin{itemize}
\item $\fj_i=(1-\sigma\tau^i)x_{\sigma\tau^{i+2}}\mm{\tau}$, $i=0,1,2$, see \eqref{eq:sigma tensor tau};
\item $\fg=(\xij{13}\xij{23}-\zeta^2\xij{12}\xij{13})\mm{\tau}+(\xij{13}\xij{12}-\zeta^2\xij{12}\xij{23})\mm{\tau^{-1}}$ and $\sigma\fg$,
\end{itemize}
recall \eqref{eq:tau tensor tau} and Lemma \ref{descomposicion de BV as DS3mod} \eqref{item:descomposion de BV tau}.

\begin{thm}\label{teo:el latice de tau cero}
The proper $\D$-submodules of $\fM{(\tau,0)}$ are  $\fS{(\tau,0)}\subset\fX{(\tau,0)}$ where
\begin{enumerate}[(i)]
\item\label{teo: tau cero mas} $\fX(\tau,0)=\D\fJ$ is a highest-weight submodule of weight $(\sigma,-)$.
\item\label{teo: tau cero soc} $\fS{(\tau,0)}=\D\fG$ is a highest-weight submodule of weight $(e,\rho)$.
\end{enumerate}
Therefore $\bigl\{\fL(\tau,0),\fL(\sigma,-),\fL(e,\rho)\bigr\}$ are the composition factors of $\fM(\tau,0)$.
\end{thm}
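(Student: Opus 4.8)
The idea is that nearly everything is already in place: by Theorem~\ref{teo:unico submod max} the maximal submodule $\fX(\tau,0)$ is unique and homogeneous, by Theorem~\ref{teo:socle simple} the socle $\fS(\tau,0)$ is simple and --- being $\BV(\oV)\fMsoc$ --- homogeneous, and Corollaries~\ref{cor: l e rho cociente de VQN}, \ref{cor: l tau cero cociente de UPO} and~\ref{cor: l tau cero s e rho} already give $\fL(\tau,0)\simeq M(\tau,0)\oplus M(\sigma,+)\oplus M(e,\rho)$ and $\fS(\tau,0)\simeq\fL(e,\rho)\simeq M(e,\rho)\oplus M(\sigma,+)\oplus M(\tau,0)$ as $\DSn$-modules. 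So I would first read off the $\DSn$-structure of the homogeneous layers of $\fM(\tau,0)=\BV(V)\ot M(\tau,0)$ from Lemma~\ref{descomposicion de BV as DS3mod} and the fusion rules of \S\ref{subsub:fusion rules}: $\fM^0\simeq M(\tau,0)$, $\fM^{-1}\simeq M(\sigma,+)\oplus M(\sigma,-)$, $\fM^{-2}\simeq M(e,\rho)^{\oplus2}\oplus M(\tau,1)\oplus M(\tau,2)$, $\fM^{-3}\simeq M(\sigma,+)\oplus M(\sigma,-)$ and $\fM^{-4}=\fMsoc\simeq M(\tau,0)$, so $\dim\fM(\tau,0)=24$. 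Since $\fL(\tau,0)$ is generated in degree $0$ and $\fS(\tau,0)$ ends in degree $-4$, comparing their $\DSn$-types with these layers forces $\fL(\tau,0)$ to lie in degrees $0,-1,-2$ and $\fS(\tau,0)$ in degrees $-2,-3,-4$; hence $\fX^{-1}(\tau,0)\simeq M(\sigma,-)$, $\fX^{-2}(\tau,0)\simeq M(e,\rho)\oplus M(\tau,1)\oplus M(\tau,2)$ with $M(e,\rho)$ of multiplicity one, $\fX^{-3}(\tau,0)=\fM^{-3}$ and $\fX^{-4}(\tau,0)=\fMsoc$ (so $\dim\fX(\tau,0)=17$), while $\fS^{-2}(\tau,0)\simeq M(e,\rho)$, $\fS^{-3}(\tau,0)\simeq M(\sigma,+)$ and $\fS^{-4}(\tau,0)=\fMsoc$.

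Next I would pin down the two submodules in the statement. By the fusion rule \eqref{eq:sigma tensor tau} with $\ell=0$ the elements $\fj_0,\fj_1,\fj_2$ span exactly the $M(\sigma,-)$-isotypic part of $\fM^{-1}$, which is $\fX^{-1}(\tau,0)$; thus $\fJ\subseteq\fX(\tau,0)$ and $\D\fJ\subseteq\fX(\tau,0)\subsetneq\fM(\tau,0)$. As $\oV\fJ\subseteq\fM^0\simeq M(\tau,0)$ is simple, a non-zero value would place $\fM^0$ --- hence, by \eqref{eq:verma d mas ciclico}, all of $\fM(\tau,0)$ --- inside $\D\fJ$, which is impossible; so $\oV\fJ=0$ and $\D\fJ=\BV(V)\fJ$ is a highest-weight module of weight $(\sigma,-)$. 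For $\fG$: by \eqref{eq:tau tensor tau} and Lemma~\ref{descomposicion de BV as DS3mod}\eqref{item:descomposion de BV tau} it is a copy of $M(e,\rho)$ inside $\fM^{-2}$, and I would check --- from the explicit action of $\BV(\oV)$ in the Appendix, noting that by Remark~\ref{obs:description submodule gen by a submodule 1} (and since $\oV\ot\fG$ has no equal-degree component) it suffices to see $\yij{12}\fg=0$ --- that $\oV\fG=0$; hence $\D\fG=\BV(V)\fG$ is concentrated in degrees $\le-2$, is therefore a proper (hence homogeneous) submodule, and so $\fG\subseteq\fX^{-2}(\tau,0)$. Since $\fX^{-2}(\tau,0)$ contains a unique copy of $M(e,\rho)$ and $\fS^{-2}(\tau,0)$ is such a copy lying inside $\fX(\tau,0)$, we get $\fG=\fS^{-2}(\tau,0)\subseteq\fS(\tau,0)$; as $\fS(\tau,0)$ is simple and $\D\fG\neq0$ this forces $\D\fG=\fS(\tau,0)$, which is highest-weight of weight $(e,\rho)$ because it is generated by $\fG\simeq M(e,\rho)$ with $\oV\fG=0$. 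This proves \eqref{teo: tau cero soc}.

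It remains to see that there are no other proper submodules. The key point is that $\fX(\tau,0)/\fS(\tau,0)$, whose $\DSn$-constituents are $M(\sigma,-)^{\oplus2}$, $M(\tau,1)$ and $M(\tau,2)$, is simple: any composition factor of it has all its $\DSn$-weights in $\{(\sigma,-),(\tau,1),(\tau,2)\}$, and among the eight simple $\D$-modules --- all known at this point from Theorems~\ref{teo:los simples} and~\ref{teo:el latice de sigma menos} --- only $\fL(\sigma,-)$ has this property, while $\dim\fL(\sigma,-)=10=\dim\bigl(\fX(\tau,0)/\fS(\tau,0)\bigr)$; hence $\fX(\tau,0)/\fS(\tau,0)\simeq\fL(\sigma,-)$. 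Therefore the submodules of $\fX(\tau,0)$ are exactly $0$, $\fS(\tau,0)$ and $\fX(\tau,0)$, and since $\D\fJ$ is a non-zero submodule of $\fX(\tau,0)$ carrying the weight $(\sigma,-)$ --- which $\fS(\tau,0)$ does not --- we conclude $\D\fJ=\fX(\tau,0)$, proving \eqref{teo: tau cero mas}. Finally, every proper $\D$-submodule of $\fM(\tau,0)$ lies in the unique maximal submodule $\fX(\tau,0)$ (Theorem~\ref{teo:unico submod max}) and hence equals $0$, $\fS(\tau,0)$ or $\fX(\tau,0)$; the composition factors of $\fM(\tau,0)$ are then $\fS(\tau,0)\simeq\fL(e,\rho)$, $\fX(\tau,0)/\fS(\tau,0)\simeq\fL(\sigma,-)$ and $\fM(\tau,0)/\fX(\tau,0)\simeq\fL(\tau,0)$. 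The one place where a genuine computation enters is the verification $\yij{12}\fg=0$, which --- via the multiplicity-one of $M(e,\rho)$ in $\fX^{-2}(\tau,0)$ --- pins the displayed $\fG$ to the copy of $M(e,\rho)$ contained in $\fS(\tau,0)$; apart from that and the careful degree-by-degree bookkeeping of the layers of $\fX(\tau,0)$ and $\fS(\tau,0)$, everything is forced by the general theory.
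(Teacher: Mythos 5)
Your argument is correct and essentially the paper's own: it rests on the same inputs (Corollaries \ref{cor: l tau cero cociente de UPO} and \ref{cor: l tau cero s e rho}, homogeneity of the maximal submodule from Theorem \ref{teo:unico submod max}, and the verifications $\oV\fJ=0$ and $\oV\fG=0$ via the Appendix) combined with the same weight-and-degree bookkeeping on the layers of $\fM(\tau,0)$, the only cosmetic difference being that you identify $\fX(\tau,0)/\fS(\tau,0)\simeq\fL(\sigma,-)$ by eliminating the other seven simple $\D$-modules, whereas the paper concludes $\D\fJ=\fX(\tau,0)$ directly from the fact that $\D\fJ$ contains the socle and has head $\fL(\sigma,-)$. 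Two micro-points, both harmless: ``proper (hence homogeneous)'' should read ``proper, hence contained in the homogeneous module $\fX(\tau,0)$'', and the step ``the submodules of $\fX(\tau,0)$ are exactly $0$, $\fS(\tau,0)$ and $\fX(\tau,0)$'' tacitly uses that every non-zero submodule contains the simple socle (Theorem \ref{teo:socle simple}), which you had already invoked.
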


The weights of $\fM{(\tau,0)}$ are represented by dots in the Figure \ref{fig:tau cero} which can be computed using the fusion rules in \S\ref{subsub:fusion rules}. The weights conected by a line form the socle $\fS(\tau,0)$ and those over the dotted area form the maximal $\D$-submodule $\fX(\tau,0)$. The weights on the left hand side correspond to $\fL(\tau,0)$.

\begin{figure}[h]
\begin{center}
\includegraphics{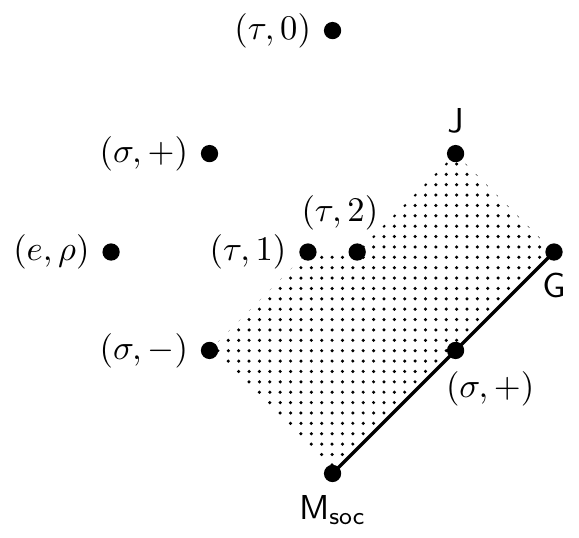}
\caption{Submodules of $\fM(\tau,0)$}\label{fig:tau cero}
\end{center}
\end{figure}

\begin{proof}
\eqref{teo: tau cero soc} By Corollary \ref{cor: l tau cero s e rho}, $\fS{(\tau,0)}$ is a highest-weight module of weight $(e,\rho)$. We see that $\oV\fG=0$ using Remark \ref{obs:description submodule gen by a submodule 1}. Therefore $\fS{(\tau,0)}=\D\fG$.

\eqref{teo: tau cero mas} By Corollary \ref{cor: l tau cero cociente de UPO}, $\fL(\tau,0)\simeq M(\tau,0)\oplus M(\sigma,+)\oplus M(e,\rho)$ as $\D(\Sn_3)$-module. Hence the sum of the simple $\D(\Sn_3)$-submodules over the dotted area in Figure \ref{fig:tau cero} have to form the maximal $\D$-submodule $\fX(\tau,0)$. Since $\fX(\tau,0)$ is homogeneous, $\oV\fJ=0$ and then $\D\fJ$ is a highest-weight submodule of weight $(\sigma,-)$. On the other hand, $\D\fJ$ has a quotient isomorphic to $\fL(\sigma,-)$ and contains the socle $\fS{(\tau,0)}$. Therefore $\fX(\tau,0)=\D\fJ$ and $\fS{(\tau,0)}$ is the unique $\D$-submodule of $\fX(\tau,0)$.
\end{proof}

\subsection{The Verma module \texorpdfstring{$\fM{(e,\rho)}$}{Merho}}
The proof of the next theorem and the description of Figure \ref{fig:e rho} are similar to the above subsection. Let $\fE\subset\fM^{-1}(e,\rho)$ and $\fC\subset\fM^{-2}(e,\rho)$ be the $\DSn$-submodules of weight $(\sigma,-)$ and $(\tau,0)$ with basis
\begin{itemize}
\item $\fe_i=\zeta^i(1-\sigma\tau^i)x_{\sigma\tau^{i+2}}\mm{\tau}$, $i=0,1,2$, see \eqref{eq:sigma tensor rho};
\item $\fc=\left(\zeta\xij{13}\xij{12}-\xij{12}\xij{23}\right)\mm{\tau}_\rho+\left(\xij{12}\xij{23}
-\zeta^{-1}\xij{13}\xij{12}\right)\mm{\tau^{-1}}_\rho$ and $\sigma\fc$,
\end{itemize}
recall \eqref{eq:tau en tau rho} and Lemma \ref{descomposicion de BV as DS3mod} \eqref{item:descomposion de BV tau}.

\begin{thm}\label{teo:el latice de erho}
The proper $\D$-submodules of $\fM(e,\rho)$ are  $\fS{(e,\rho)}\subset\fX{(e,\rho)}$ where
\begin{enumerate}[(i)]
\item\label{teo: e rho max}  $\fX(e,\rho)=\D\fE$ is a highest-weight submodule of weight $(\sigma,-)$.
\item\label{teo: e rho soc} $\fS{(e,\rho)}=\D\fC$ is a highest-weight submodule of weight $(\tau,0)$.
\end{enumerate}
Therefore $\bigl\{\fL(e,\rho),\fL(\sigma,-),\fL(\tau,0)\bigr\}$ are the composition factors of $\fM(e,\rho)$.
\qed
\end{thm}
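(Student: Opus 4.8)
The plan is to follow, \emph{mutatis mutandis}, the proof of Theorem~\ref{teo:el latice de tau cero}. I would begin by reading off the $\DSn$-module structure of $\fM(e,\rho)=\BV(V)\ot M(e,\rho)$ degree by degree, using the fusion rules of \S\ref{subsub:fusion rules} together with Lemma~\ref{descomposicion de BV as DS3mod}: degree $0$ is one copy of $M(e,\rho)$; degree $-1$ is $M(\sigma,+)\oplus M(\sigma,-)$; degree $-2$ is $M(\tau,0)^{\oplus 2}\oplus M(\tau,1)\oplus M(\tau,2)$; degree $-3$ is $M(\sigma,+)\oplus M(\sigma,-)$; and degree $-4$ is the socle layer $\fMsoc\simeq M(e,+)\ot M(e,\rho)\simeq M(e,\rho)$. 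These are the dots of Figure~\ref{fig:e rho}; in particular $\dim\fM(e,\rho)=24$, and the dimensions $\dim\fL(e,\rho)=\dim\fL(\tau,0)=7$, $\dim\fL(\sigma,-)=10$, coming from Corollaries~\ref{cor: l e rho cociente de VQN}, \ref{cor: l tau cero cociente de UPO} and Theorem~\ref{teo:el latice de sigma menos}, add up to $24$, which guides the bookkeeping below.

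For \eqref{teo: e rho soc}: by Corollary~\ref{cor: l tau cero s e rho} the socle is $\fS(e,\rho)\simeq\fL(\tau,0)$, hence a highest-weight module of weight $(\tau,0)$; since its lowest layer is $\fMsoc\simeq M(e,\rho)$ in degree $-4$ and it is $7=2+3+2$ dimensional, it occupies degrees $-4,-3,-2$, its degree $-2$ part being a copy of $M(\tau,0)$ that generates $\fS(e,\rho)$ over $\D$ and is annihilated by $\oV$. The submodule $\fC$ of the statement is a weight-$(\tau,0)$ $\DSn$-submodule of $\fM^{-2}(e,\rho)$ by \eqref{eq:tau en tau rho} and Lemma~\ref{descomposicion de BV as DS3mod}\eqref{item:descomposion de BV tau}; one checks $\oV\fC=0$ via the $\yij{12}$-action computed in the Appendix together with Remark~\ref{obs:description submodule gen by a submodule 1}. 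Then $\D\fC=\BV(V)\fC$ is highest-weight of weight $(\tau,0)$ by \eqref{eq:highest module}, so it cannot be all of $\fM(e,\rho)$ (whose head is $\fL(e,\rho)\not\simeq\fL(\tau,0)$) and is therefore contained in the maximal submodule $\fX(e,\rho)$. Since Corollary~\ref{cor: l e rho cociente de VQN} (together with the support $\cO_\sigma$ of $\fM^{-1}(e,\rho)$, which forces the $M(\tau,0)$-constituent of $\fL(e,\rho)$ into degree $-2$) pins down the degree $-2$ part of $\fX(e,\rho)$ as $M(\tau,0)\oplus M(\tau,1)\oplus M(\tau,2)$, the unique copy of $M(\tau,0)$ there must simultaneously be the generating layer of $\fS(e,\rho)$ and equal $\fC$; hence $\D\fC=\fS(e,\rho)$.

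For \eqref{teo: e rho max}: by Corollary~\ref{cor: l e rho cociente de VQN}, $\fL(e,\rho)\simeq M(e,\rho)\oplus M(\sigma,+)\oplus M(\tau,0)$ as $\DSn$-modules, with the $M(\sigma,+)$-summand in degree $-1$ (support forces it there). Since $\fM(e,\rho)/\fX(e,\rho)\simeq\fL(e,\rho)$ and all modules involved are semisimple over $\DSn$, the degree-$(-1)$ component of $\fX(e,\rho)$ is the unique copy of $M(\sigma,-)$ in $\fM^{-1}(e,\rho)$, namely $\fE$. As $\fX(e,\rho)$ is homogeneous with vanishing degree-$0$ part (Theorem~\ref{teo:unico submod max}), $\oV\fE=0$, so $\D\fE$ is a highest-weight submodule of weight $(\sigma,-)$ by \eqref{eq:highest module}. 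It surjects onto $\fL(\sigma,-)$, contains the socle $\fS(e,\rho)\simeq\fL(\tau,0)$, and $\fL(\sigma,-)\not\simeq\fL(\tau,0)$ (distinct simples, Theorem~\ref{teo:bi con L}); hence these two composition factors occur in $\D\fE$ without overlap, so $\dim\D\fE\geq 10+7=17=\dim\fX(e,\rho)$ and therefore $\D\fE=\fX(e,\rho)$.

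Finally, $\fX(e,\rho)/\fS(e,\rho)$ has dimension $17-7=10$ and surjects onto $\fL(\sigma,-)$, hence equals $\fL(\sigma,-)$ and is simple; thus $\fS(e,\rho)$ is maximal in $\fX(e,\rho)$, and since $\fS(e,\rho)$ is itself simple the only $\D$-submodules of $\fX(e,\rho)$ are $0$, $\fS(e,\rho)$ and $\fX(e,\rho)$. Because $\fX(e,\rho)$ is the unique maximal $\D$-submodule of $\fM(e,\rho)$ (Theorem~\ref{teo:unico submod max}), every proper nonzero $\D$-submodule lies in $\fX(e,\rho)$, so the only ones are $\fS(e,\rho)\subset\fX(e,\rho)$; the chain $0\subset\fS(e,\rho)\subset\fX(e,\rho)\subset\fM(e,\rho)$ then has factors $\fL(\tau,0),\fL(\sigma,-),\fL(e,\rho)$, which also settles the last assertion. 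The main obstacle is not conceptual but computational: verifying $\oV\fC=0$ from the long Appendix tables, and carefully matching the abstract $\DSn$-isotypic decomposition of $\fM(e,\rho)$ with the explicit generators $\fe_i$ and $\fc,\sigma\fc$ so that $\fE$ and $\fC$ are identified correctly; the structural steps are a verbatim transcription of the $\fM(\tau,0)$ argument.
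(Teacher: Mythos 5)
Your proposal is correct and follows essentially the same route as the paper, whose own proof of this theorem simply transports the argument of Theorem~\ref{teo:el latice de tau cero}: identify the socle via Corollary~\ref{cor: l tau cero s e rho} together with the check $\oV\fC=0$, and identify $\fX(e,\rho)$ via Corollary~\ref{cor: l e rho cociente de VQN}, homogeneity of the maximal submodule (Theorem~\ref{teo:unico submod max}), $\oV\fE=0$, and the count $7+10=17=\dim\fX(e,\rho)$. The only quibble is the parenthetical claim that support alone forces the $M(\sigma,+)$-constituent of $\fL(e,\rho)$ into degree $-1$ (degree $-3$ also has support $\cO_\sigma$); this is fixed in one line by noting that $\fL(e,\rho)$, being a quotient of the Verma module, is generated by its degree-$0$ component, so its degree-$(-1)$ part cannot vanish.
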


\begin{figure}[h]
\begin{center}
\includegraphics{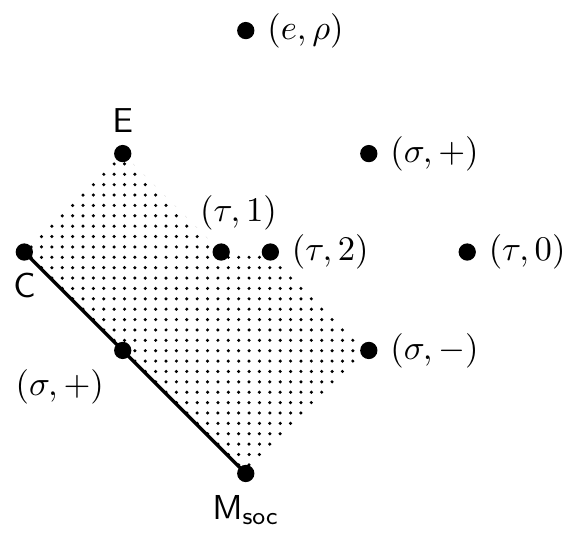}
\caption{Submodules of $\fM(e,\rho)$}\label{fig:e rho}
\end{center}
\end{figure}

\section*{Appendix}

Here we compute the action of $\yij{12}\in\D$ on the Verma Modules. We noticed in \eqref{eq:con uno es suficiente} that it suffices to calculate the action of $\yij{12}$ to know the action of the generators $\yij{23}$ and $\yij{13}$.

For the modules $\fM(e,\pm)$ and $\fM(e,\rho)$ we have only one list since all elements have weight $e$. For the module $\fM(\sigma,\pm)$ we have three lists (as the elements may have weight (12), (13) or (23)) and for the module $\fM(\tau,\ell)$ we have two lists (for the possible weights (123) and (132)).

\vspace{0.5cm}

List 1: Action on $\fM(e,\pm)$

$\yij{12}\cdot(\xij{12}\mm{e}_\pm)=(1\mp1)\mm{e}_\pm$

$\yij{12}\cdot(\xij{13}\mm{e}_\pm)=0$

$\yij{12}\cdot(\xij{23}\mm{e}_\pm)=0$

$\yij{12}\cdot(\xij{12}\xij{13}\mm{e}_\pm)=\xij{13}\mm{e}_\pm$

$\yij{12}\cdot(\xij{12}\xij{23}\mm{e}_\pm)=\xij{23}\mm{e}_\pm$

$\yij{12}\cdot(\xij{13}\xij{12}\mm{e}_\pm)=\mp\xij{23}\mm{e}_\pm$

$\yij{12}\cdot(\xij{13}\xij{23}\mm{e}_\pm)=-\xij{13}(1\mp 1)\mm{e}_\pm$

$\yij{12}\cdot(\xij{12}\xij{13}\xij{12}\mm{e}_\pm)=\xij{13}\xij{12}\mm{e}_\pm\pm\xij{12}\xij{23}\mm{e}_\pm$

$\yij{12}\cdot(\xij{12}\xij{13}\xij{23}\mm{e}_\pm)=\xij{12}\xij{13}(1\mp 1))\mm{e}_\pm+\xij{13}\xij{23}\mm{e}_\pm$

$\yij{12}\cdot(\xij{13}\xij{12}\xij{23}\mm{e}_\pm)=0$

$\yij{12}\cdot(\xij{12}\xij{13}\xij{12}\xij{23}\mm{e}_\pm)=\xij{13}\xij{12}\xij{23}(1\mp 1)\mm{e}_\pm$

\vspace{0.5cm}

List 2: Action on $\fM(e,\rho)$

$\yij{12}\cdot(\xij{12}\mm{\tau^{\pm1}}_\rho)=1\mm{\tau^{\pm1}}_\rho-1\mm{\tau^{\mp1}}_\rho$

$\yij{12}\cdot(\xij{13}\mm{\tau^{\pm1}}_\rho)=0$

$\yij{12}\cdot(\xij{23}\mm{\tau^{\pm1}}_\rho)=0$

$\yij{12}\cdot(\xij{12}\xij{13}\mm{\tau^{\pm1}}_\rho)=\xij{13}\mm{\tau^{\pm1}}_\rho$

$\yij{12}\cdot(\xij{12}\xij{23}\mm{\tau^{\pm1}}_\rho)=\xij{23}\mm{\tau^{\pm1}}_\rho$

$\yij{12}\cdot(\xij{13}\xij{12}\mm{\tau^{\pm1}}_\rho)=-{\zeta}^{\pm 1}\xij{23}\mm{\tau^{\mp1}}_\rho$

$\yij{12}\cdot(\xij{13}\xij{23}\mm{\tau^{\pm1}}_\rho)=-\xij{13}\mm{\tau^{\pm1}}_\rho+{\zeta}^{\pm1}\xij{13}\mm{\tau^{\mp1}}_\rho$

$\yij{12}\cdot(\xij{12}\xij{13}\xij{12}\mm{\tau^{\pm1}}_\rho)=\xij{13}\xij{12}\mm{\tau^{\pm1}}_\rho+{\zeta}^{\mp1}\xij{12}\xij{23}\mm{\tau^{\mp1}}_\rho$

$\yij{12}\cdot(\xij{12}\xij{13}\xij{23}\mm{\tau^{\pm1}}_\rho)=\xij{12}\xij{13}\mm{\tau^{\pm1}}_\rho-{\zeta}^{\pm1}\xij{12}\xij{13}\mm{\tau^{\mp1}}_\rho+\xij{13}\xij{23}\mm{\tau^{\pm1}}_\rho$

$\yij{12}\cdot(\xij{13}\xij{12}\xij{23}\mm{\tau^{\pm1}}_\rho)=0$

$\yij{12}\cdot(\xij{12}\xij{13}\xij{12}\xij{23}\mm{\tau^{\pm1}}_\rho)=\xij{13}\xij{12}\xij{23}\mm{\tau^{\pm1}}_\rho-\xij{13}\xij{12}\xij{23}\mm{\tau^{\mp1}}_\rho$
\vspace{0.5cm}

List 3: Action on $\fM(\sigma,\pm)$

$\yij{12}\cdot(\xij{12}\mm{12}_{\pm})=(1\pm1)\mm{12}_{\pm}$

$\yij{12}\cdot(\xij{13}\mm{12}_\pm)=\pm1\mm{23}_\pm$

$\yij{12}\cdot(\xij{23}\mm{12}_{\pm})=\pm1\mm{13}_\pm$

$\yij{12}\cdot(\xij{12}\xij{13}\mm{12}_{\pm})=\xij{13}\mm{12}_{\pm}\mp\xij{12}\mm{23}_{\pm}$

$\yij{12}\cdot(\xij{12}\xij{23}\mm{12}_{\pm})=\xij{23}\mm{12}_{\pm}\mp\xij{12}\mm{13}_{\pm}$

$\yij{12}\cdot(\xij{13}\xij{12}\mm{12}_{\pm})=0$

$\yij{12}\cdot(\xij{13}\xij{23}\mm{12}_{\pm})=-\xij{13}\mm{12}_{\pm}\pm\xij{12}\mm{23}_{\pm}$

$\yij{12}\cdot(\xij{12}\xij{13}\xij{12}\mm{12}_{\pm})=\xij{13}\xij{12}\mm{12}_{\pm}$

$\yij{12}\cdot(\xij{12}\xij{13}\xij{23}\mm{12}_{\pm})=\xij{13}\xij{23}\mm{12}_{\pm}+\xij{12}\xij{13}\mm{12}_{\pm}$

$\yij{12}\cdot(\xij{13}\xij{12}\xij{23}\mm{12}_{\pm})=0$

$\yij{12}\cdot(\xij{12}\xij{13}\xij{12}\xij{23}\mm{12}_{\pm})=\xij{13}\xij{12}\xij{23}(1\pm 1)\mm{12}_{\pm}$

\vspace{0.5cm}

List 4: Action on $\fM(\sigma,\pm)$

$\yij{12}\cdot(\xij{12}\mm{13}_{\pm})=1\mm{13}_{\pm}$

$\yij{12}\cdot(\xij{13}\mm{13}_{\pm})=0$

$\yij{12}\cdot(\xij{23}\mm{13}_{\pm})=0$

$\yij{12}\cdot(\xij{12}\xij{13}\mm{13}_{\pm})=\xij{13}\mm{13}_{\pm}\pm\xij{23}\mm{23}_{\pm}$

$\yij{12}\cdot(\xij{12}\xij{23}\mm{13}_{\pm})=\xij{23}\mm{13}_{\pm}$

$\yij{12}\cdot(\xij{13}\xij{12}\mm{13}_{\pm})=\pm\xij{23}\mm{13}_{\pm}$

$\yij{12}\cdot(\xij{13}\xij{23}\mm{13}_{\pm})=-\xij{13}\mm{13}_{\pm}$

$\yij{12}\cdot(\xij{12}\xij{13}\xij{12}\mm{13}_{\pm})=\xij{13}\xij{12}\mm{13}_{\pm}\mp\xij{12}\xij{23}\mm{13}_{\pm}$

$\yij{12}\cdot(\xij{12}\xij{13}\xij{23}\mm{13}_{\pm})=
(\xij{13}\xij{23}+\xij{12}\xij{13})\mm{13}_{\pm}\mp(\xij{13}\xij{12}+\xij{12}\xij{23})\mm{23}_{\pm}$

$\yij{12}\cdot(\xij{13}\xij{12}\xij{23}\mm{13}_{\pm})=\pm\xij{13}\xij{12}\mm{12}_{\pm}$

$\yij{12}\cdot(\xij{12}\xij{13}\xij{12}\xij{23}\mm{13}_{\pm})=\xij{13}\xij{12}\xij{23}\mm{13}_{\pm}\mp\xij{12}\xij{13}\xij{12}\mm{12}_{\pm}$

\vspace{0.5cm}

List 5: Action on $\fM(\sigma,\pm)$

$\yij{12}\cdot(\xij{12}\mm{23}_{\pm})=1\mm{23}_{\pm}$

$\yij{12}\cdot(\xij{13}\mm{23}_{\pm})=0$

$\yij{12}\cdot(\xij{23}\mm{23}_{\pm})=0$

$\yij{12}\cdot(\xij{12}\xij{13}\mm{23}_{\pm})=\xij{13}\mm{23}_{\pm}$

$\yij{12}\cdot(\xij{12}\xij{23}\mm{23}_{\pm})=\xij{23}\mm{23}_{\pm}\pm\xij{13}\mm{13}_{\pm}$

$\yij{12}\cdot(\xij{13}\xij{12}\mm{23}_{\pm})=\mp\xij{13}\mm{13}_{\pm}$

$\yij{12}\cdot(\xij{13}\xij{23}\mm{23}_{\pm})=-\xij{13}(1\pm 1)\mm{23}_{\pm}$

$\yij{12}\cdot(\xij{12}\xij{13}\xij{12}\mm{23}_{\pm})=\xij{13}\xij{12}\mm{23}_{\pm}\mp\xij{13}\xij{23}\mm{13}_{\pm}$

$\yij{12}\cdot(\xij{12}\xij{13}\xij{23}\mm{23}_{\pm})=\xij{13}\xij{23}\mm{23}_{\pm}-\xij{12}\xij{13}(1\pm 1)\mm{23}_{\pm}$

$\yij{12}\cdot(\xij{13}\xij{12}\xij{23}\mm{23}_{\pm})=\mp\xij{12}\xij{13}\mm{12}_{\pm}\mp\xij{13}\xij{23}\mm{12}_{\pm}$

$\yij{12}\cdot(\xij{12}\xij{13}\xij{12}\xij{23}\mm{23}_{\pm})=\pm\xij{12}\xij{13}\xij{23}\mm{12}_{\pm}+\xij{13}\xij{12}\xij{23}\mm{23}_{\pm}$

\vspace{0.5cm}

List 6: Action on $\fM(\tau,\ell)$

$\yij{12}\cdot(\xij{12}\mm{123}_\ell)=1\mm{123}_\ell$

$\yij{12}\cdot(\xij{13}\mm{123}_\ell)=0$

$\yij{12}\cdot(\xij{23}\mm{123}_\ell)=-\zeta^\ell\mm{132}_\ell$

$\yij{12}\cdot(\xij{12}\xij{13}\mm{123}_\ell)=\xij{13}\mm{123}_\ell-\xij{23}\mm{132}_\ell$

$\yij{12}\cdot(\xij{12}\xij{23}\mm{123}_\ell)=\xij{23}\mm{123}_\ell+\zeta^\ell\xij{12}\mm{132}_\ell$

$\yij{12}\cdot(\xij{13}\xij{12}\mm{123}_\ell)=0$

$\yij{12}\cdot(\xij{13}\xij{23}\mm{123}_\ell)=-\xij{13}\mm{123}_\ell$

$\yij{12}\cdot(\xij{12}\xij{13}\xij{12}\mm{123}_\ell)=\xij{13}\xij{12}\mm{123}_\ell$

$\yij{12}\cdot(\xij{12}\xij{13}\xij{23}\mm{123}_\ell)=
(\xij{13}\xij{23}+\xij{12}\xij{13})\mm{123}_\ell+(\xij{13}\xij{12}+\xij{12}\xij{23})\mm{132}_\ell$

$\yij{12}\cdot(\xij{13}\xij{12}\xij{23}\mm{123}_\ell)=\zeta^{-\ell}\xij{12}\xij{13}\mm{132}_\ell+\zeta^{-\ell}\xij{13}\xij{23}\mm{132}_\ell$

$\yij{12}\cdot(\xij{12}\xij{13}\xij{12}\xij{23}\mm{123}_\ell)=\xij{13}\xij{12}\xij{23}\mm{123}_\ell-\zeta^{-\ell}\xij{12}\xij{13}\xij{23}\mm{132}_\ell$

\vspace{0.5cm}

List 7: Action on $\fM(\tau,\ell)$

$\yij{12}\cdot(\xij{12}\mm{132}_\ell)=1\mm{132}_\ell$

$\yij{12}\cdot(\xij{13}\mm{132}_\ell)=-\zeta^\ell\mm{123}_\ell$

$\yij{12}\cdot(\xij{23}\mm{132}_\ell)=0$

$\yij{12}\cdot(\xij{12}\xij{13}\mm{132}_\ell)=\xij{13}\mm{132}_\ell+\zeta^\ell\xij{12}\mm{123}_\ell$

$\yij{12}\cdot(\xij{12}\xij{23}\mm{132}_\ell)=\xij{23}\mm{132}_\ell-\xij{13}\mm{123}_\ell$

$\yij{12}\cdot(\xij{13}\xij{12}\mm{132}_\ell)=\xij{13}\mm{123}_\ell$

$\yij{12}\cdot(\xij{13}\xij{23}\mm{132}_\ell)=\zeta^\ell\xij{12}\mm{123}_\ell-\xij{13}\mm{132}_\ell$

$\yij{12}\cdot(\xij{12}\xij{13}\xij{12}\mm{132}_\ell)=\xij{13}\xij{12}\mm{132}_\ell+\xij{13}\xij{23}\mm{123}_\ell$

$\yij{12}\cdot(\xij{12}\xij{13}\xij{23}\mm{132}_\ell)=\xij{13}\xij{23}\mm{132}_\ell+\xij{12}\xij{13}\mm{132}_\ell$

$\yij{12}\cdot(\xij{13}\xij{12}\xij{23}\mm{132}_\ell)=-\zeta^{-\ell}\xij{13}\xij{12}\mm{123}_\ell$

$\yij{12}\cdot(\xij{12}\xij{13}\xij{12}\xij{23}\mm{132}_\ell)=\xij{13}\xij{12}\xij{23}\mm{132}_\ell+\zeta^{-\ell}\xij{12}\xij{13}\xij{12}\mm{123}_\ell$

\section*{References}

\end{document}